\def\Cset{\mathbf{C}}
 \def\Hset{\mathbf{H}}
 \def\Qset{\mathbf{Q}}
 \def\Rset{\mathbf{R}}
 \def\Zset{\mathbf{Z}}
 \def\Tset{\mathbb{T}}
 \def\Fset{\mathbb{F}}
\def\Kset{\mathbf{K}}
\def\wt{\widetilde}
\def\leq{\leqslant }
\def\geq{\geqslant}
\newtheorem{theorem}{Theorem}[section]
\newtheorem{lemma}[theorem]{Lemma}
\newtheorem{corollary}[theorem]{Corollary}
\newtheorem{proposition}[theorem]{Proposition}
\numberwithin{equation}{section}
\theoremstyle{definition}
\newtheorem{definition}[theorem]{Definition}
\newtheorem{example}[theorem]{Example}
\newtheorem{remark}[theorem]{Remark}
\begin{document}

\title[Homology of origamis with symmetries]{Homology of origamis with symmetries \\ (Homologie des origamis avec sym\'etries)}

\author[C. Matheus]{Carlos Matheus}
\address{ Universit\'e Paris 13, Sorbonne Paris Cit\'e, LAGA, CNRS (UMR 7539), F-93430, Villetaneuse, France.}
\email{ matheus.cmss(at)gmail.com}
\author[J.-C. Yoccoz]{Jean-Christophe Yoccoz}
\address{Coll\`ege de France (PSL), 3, Rue d'Ulm, 75005
Paris, France}
\email{ jean-c.yoccoz(at)college-de-france.fr }
\author[D. Zmiaikou]{David Zmiaikou}
\address{D\'epartement de Math\'ematiques, Universit\'e Paris-Sud 11, 91405 Orsay Cedex, France}
\email{ david.zmiaikou(at)gmail.com}

\selectlanguage{english}

\keywords{Origamis, square-tiled surfaces, automorphisms group, affine group, representations of finite groups, regular and quasi-regular origamis, Kontsevich-Zorich cocycle, Lyapunov exponents. \\ .\hspace{0.2cm} \emph{Mots-cl\'es.} Origamis, surfaces \`a petits carreaux, groupes d'automorphismes, groupes affines, repr\'esentations des groupes finis, origamis r\'eguliers et quasi-r\'eguliers, cocycle de Kontsevich-Zorich, exposants de Lyapunov.}

\subjclass[2010]{Primary: 37D40 (Dynamical systems of geometric origin and hyperbolicity); Secondary: 30F10 (Compact Riemann surfaces and uniformization), 30F60 (Teichm\"uller theory),
32G15 (Moduli of Riemann surfaces, Teichm\"uller theory),
20C05 (Group rings of finite groups and their modules)}

\date{\today}

\maketitle

%\pageheight{8.5truein}
%\pagewidth{6.5truein}

\begin{abstract}
Given an origami (square-tiled surface) $M$ with automorphism group $\Gamma$, we compute the decomposition of the first homology group of $M$ into isotypic
$\Gamma$-submodules. Through the action of the affine group of $M$ on the homology group, we deduce some consequences for the multiplicities of the
Lyapunov exponents of the Kontsevich-Zorich cocycle. We also construct and study several families of interesting origamis illustrating our results.
\end{abstract}

\selectlanguage{francais}
\begin{abstract} \'Etant donn\'e un origami (surface \`a petits carreaux) $M$ avec un groupe d'automorphismes $\Gamma$, nous d\'eterminons la d\'ecomposition du premier groupe d'homologie de $M$ en $\Gamma$-submodules isotypiques. Parmi l'action du groupe affine de $M$ sur le groupe d'homologie, nous d\'eduisons quelques cons\'equences pour les multiplicit\'es des exposants de Lyapunov du cocycle de Kontsevich-Zorich. De plus, nous construisons et \'etudions plusieurs familles d'origamis int\'eressants pour illustrer nos r\'esultats. 
\end{abstract}

\selectlanguage{english}

\tableofcontents

\section{Introduction}\label{intro}

Consider a (compact, connected) Riemann surface $M$ equipped with a non-zero holomorphic $1$-form $\omega$. The local primitives of  $\omega$ form an atlas on $M - \{\textrm{zeros of } \omega\}$
for which the coordinate changes are given locally by translations, hence the name {\it translation surface} for the data $(M,\omega)$. These geometric structures and their moduli spaces have attracted a lot of attention since
 the pioneering work of Masur (\cite {Masur1}) and Veech (\cite {Veech1}) in the early 1980's. Translation surfaces occur naturally in the study of billiards in rational polygons, and
as suspension of interval exchange transformations.

\medskip

Given a translation surface $(M,\omega)$ and an element $g \in SL(2,\Rset)$, one obtains an atlas which defines another structure of translation surface on $M$ by postcomposing the preferred charts by $g$.
One defines in this way an action of $SL(2,\Rset)$ on the moduli space of translation surfaces. The restriction of this action to the $1$-parameter diagonal subgroup $\textrm{diag}(e^t,e^{-t})$
is the  {\it Teichm\"uller flow} on the moduli space. It can also be seen as a suspension of the Rauzy-Veech renormalization algorithm for interval exchange transformations (\cite {Veech}, \cite {Y-Pisa}, \cite {Zorich6}) .

\medskip

The {\it Veech group}  $SL(M,\omega)$ of a translation surface $(M,\omega)$ is the stabilizer for this action of the point represented by $(M,\omega)$ in moduli space.
 The {\it affine group} $\textrm{Aff}_+ (M,\omega)$ is the group of orientation-preserving homeomorphisms of $M$ which preserve the set of zeros of $\omega$ and can be read
  locally in the preferred charts as  affine self-maps of the plane. Similarly, the automorphism group $\textrm{Aut} (M,\omega)$ is the group of
   orientation-preserving homeomorphisms of $M$ which preserve the set of zeros
 of $\omega$ and can be read locally in the preferred charts as translations of the plane. Given an affine homeomorphism of $(M,\omega)$, its derivative in the preferred charts
 is constant
 and this constant value belongs to the Veech group. We actually have an exact sequence
  $$1\to\textrm{Aut}(M)\to \textrm{Aff}_+(M)\to SL(M)\to 1,$$
 where, as in the rest of the paper, we have written $M$ instead of  $(M,\omega)$ when the translation structure is clear from the context.

 \medskip

A {\it Veech surface} is a translation surface whose Veech group is a lattice in $SL(2,\Rset)$. In this paper, we will be interested in the simplest class of Veech surfaces: they go
in the litterature by various names,  {\it arithmetic Veech surfaces, square-tiled surfaces or origamis} , and can be characterized in several equivalent ways. For instance,
they are those translation surfaces whose Veech group is commensurate to $SL(2,\Zset)$; they are also (from a theorem of Gutkin-Judge \cite {GJ}) those translation
surfaces $(M,\omega)$ such that there exists a
covering map $\pi: M \to \Tset^2 = \Cset/ \Zset^2$ unramified off $0$ with $\pi^*(dz) =\omega$. Another more algebraic viewpoint has been developed  by
Herrlich, Schmith\"usen and Zmiaikou (\cite {He} \cite {HS} \cite {Zm}):  an origami is a finite set of copies of the standard square with the left (resp. top) side of every square
identified with the right (resp. bottom) side of some another (possibly the same) square.

 \medskip

 The Kontsevich-Zorich cocycle (KZ cocycle) is a cocycle over the Teichm\"uller flow first considered in
   \cite{K}, \cite{Zorich1}, \cite{Zorich2}, \cite{Zorich3}, \cite{Zorich4}, \cite{Zorich5}
 which has been the subject of several important works in the last few years. Roughly speaking,
 the KZ cocycle is the Gauss-Manin connection acting on the first homology (or cohomology) group. Above the $SL(2,\Rset)$-orbit of a surface with a non trivial group
 of automorphisms, the definition requires some extra care and is given in Section $4$. Zorich discovered that the Lyapunov exponents of this symplectic cocycle with respect to
 ergodic $SL(2,\Rset)$-invariant probability measures on moduli space govern  the \emph{deviations of ergodic averages} of  typical
 interval exchange transformations, translation flows and billiards. The knowledge of some of these exponents  was recently exploited by  Delecroix, Hubert and  Leli\`evre \cite{DHL}
 to confirm the prediction of the physicists J. Hardy and J. Weber of  unexpected speed of diffusion of trajectories in ``typical realizations'' of the periodic Ehrenfest's wind-tree model
 for Lorenz gases.

 \medskip

 Very different methods have been used to understand the Lyapunov exponents of the KZ-cocycle.  Kontsevich and Zorich had conjectured that the Lyapunov exponents
 with respect to the Masur-Veech measures on connected component of strata of
   moduli space are all simple. This has been   fully proved by A. Avila and M. Viana \cite{AV} by using \emph{combinatorial} properties of the so-called Rauzy-Veech induction,
   after an important partial answer by G. Forni \cite{Fo} based on \emph{complex analytical} and \emph{potential theoretical} methods.
Several authors have used \emph{algebro-geometric} methods to compute individual values and/or sums of these exponents with respect to  ergodic
$SL(2,\Rset)$-invariant probability measures:
 Bainbridge \cite{Ba},  Bouw and M\"oller \cite{BM}, Chen and M\"oller \cite{CM}, Yu and Zuo \cite{YZ}, and most importantly Eskin,  Kontsevich and Zorich \cite{EKZ1}, \cite{EKZ2}.

\medskip

The purpose of this paper is to study through elementary representation theory the relation between the automorphism group $\Gamma$ of an origami $M$ and the
Lyapunov exponents of the KZ-cocycle over the
$SL(2,\Rset)$-orbit of $M$ in moduli space (equipped with the probability measure derived from Haar measure on  $SL(2,\Rset)$). The relation is given by the action of the
affine group on homology. On one hand, the Lyapunov exponents can be viewed as limits
\begin{equation*}
 \theta = \lim_{n \rightarrow + \infty} \frac {\log ||A_n(v)||}{\log ||A_n||}
 \end{equation*}
for some appropriate sequence $(A_n)$ of affine homeomorphisms of $M$. On the other hand,  the homology group has the structure of a $\Gamma$-module, and this
 structure is preserved by a subgroup of finite index of the affine group. It follows that the decomposition of the homology group into isotypic $\Gamma$-submodules is preserved
 by the KZ-cocycle. Moreover, the symplectic structure on the homology group given by the intersection form imposes further restrictions, especially when the isotypic components
 are of complex or quaternionic type.

 \medskip

The paper is organized as follows. Section $2$ introduces our preferred setting for origamis: given a finite group $G$ generated by two elements $g_r, g_u$ and a
subgroup $H$ of $G$ such that the intersection of the conjugates of $H$ in $G$ is trivial, we associate an origami $M$ by taking for the squares of $M$ the right classes $Hg$,
with $Hg g_r$ to the right of $Hg$ and $Hg g_u$ to the top of it. The automorphism group $\Gamma$ is then naturally isomorphic to the quotient by $H$ of the normalizer
$N$ of $H$ in $G$.   In this representation, the fiber $\Sigma^*$ of $0$ for the canonical map $\pi: M \to \Tset^2$ is given by the action of the commutator $c$ of $g_r, g_u$
on the set of squares. In the end of the section, it is explained how the structure of the homology group as a $\Gamma$-module  is related to the action of $\Gamma$
on $\Sigma^*$.

\medskip

In Section $3$, we obtain the decomposition of the homology with complex coefficients into isotypic $\Gamma$-modules. First, the $\Gamma$-module  $ H_1 (M, \Cset) $ splits into
a trivial $\Gamma$-module of complex dimension $2$ denoted $H_1^{st} (M, \Cset) $ and a complementary module denoted $H_1^{(0)} (M,\Cset)$. For any irreducible
representation $\rho_{\alpha}: \Gamma \to \textrm{Aut}(V_{\alpha})$, a formula for the multiplicity $\ell_{\alpha}$ of $\rho_{\alpha}$ in  $H_1^{(0)} (M,\Cset)$ is obtained
in terms of the action of the commutator $c=[g_r,g_u]=g_r g_u g_r^{-1}g_u^{-1}$ in the  representation induced by $\rho_{\alpha}$ to $G$. It shows that this multiplicity is greater or equal than the multiplicity $\ell_0$
 of the trivial representation. In the simplest case of a {\it regular} origami, corresponding to $H=\{1\}$, we have
\begin{theorem}
For the regular origami associated to the generators $g_r, g_u$ of $G$, the multiplicity of the representation  $\rho_{\alpha}$ in $H_1^{(0)} (M,\Cset)$ is equal to the
codimension of the fixed space of $\rho_{\alpha}([g_r,g_u])$.
\end{theorem}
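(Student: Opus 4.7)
The plan is to compute $H_1(M,\Cset)$ directly as a virtual $G$-representation, using the natural $G$-equivariant CW structure on $M$ together with the Euler characteristic identity in the Grothendieck group $K_0(\Cset[G])$; the multiplicity $\ell_\alpha$ will then fall out by Frobenius reciprocity. In the regular case, $H=\{1\}$ forces $\Gamma=G$, and $G$ acts freely on the squares by left multiplication, so the standard cell structure on $M$ is $G$-equivariant.

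First, I will identify the cellular chain groups as $G$-modules. With squares as $2$-cells, horizontal and vertical edges as $1$-cells, and corners (the points of $\Sigma^*$) as $0$-cells, the free action on squares and on each family of edges gives
$$C_2(M,\Cset) \cong \Cset[G], \qquad C_1(M,\Cset) \cong \Cset[G]^{\oplus 2}.$$
For the vertices, Section~2 identifies $\Sigma^*$ with the orbits of right multiplication by $c=[g_r,g_u]$ on $G$; as a $G$-set for the left-multiplication action this is $G/\langle c\rangle$, and therefore
$$C_0(M,\Cset) \cong \mathrm{Ind}_{\langle c\rangle}^{G}\, \Cset_{\mathrm{triv}}.$$

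Next, since the boundary maps are $G$-equivariant and complex $G$-representations are semisimple, the alternating sum holds in $K_0(\Cset[G])$:
$$[C_0] - [C_1] + [C_2] = [H_0(M,\Cset)] - [H_1(M,\Cset)] + [H_2(M,\Cset)].$$
Using $H_0 \cong H_2 \cong \Cset_{\mathrm{triv}}$ (connectedness, plus the fact that $G$ acts by orientation-preserving homeomorphisms) and rearranging yields
$$[H_1(M,\Cset)] = 2\,[\Cset_{\mathrm{triv}}] + [\Cset[G]] - [\mathrm{Ind}_{\langle c\rangle}^{G}\,\Cset_{\mathrm{triv}}],$$
and subtracting the stable summand $[H_1^{st}(M,\Cset)] = 2\,[\Cset_{\mathrm{triv}}]$ leaves
$$[H_1^{(0)}(M,\Cset)] = [\Cset[G]] - [\mathrm{Ind}_{\langle c\rangle}^{G}\,\Cset_{\mathrm{triv}}].$$

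The theorem then follows by pairing with $[\rho_\alpha]$: the regular representation contributes $\dim V_\alpha$, while Frobenius reciprocity gives
$$\langle \rho_\alpha,\, \mathrm{Ind}_{\langle c\rangle}^{G}\, \Cset_{\mathrm{triv}}\rangle_G = \langle \mathrm{Res}_{\langle c\rangle}^{G}\, \rho_\alpha,\, \Cset_{\mathrm{triv}}\rangle_{\langle c\rangle} = \dim V_\alpha^{\rho_\alpha(c)}.$$
Hence $\ell_\alpha = \dim V_\alpha - \dim V_\alpha^{\rho_\alpha(c)}$, the codimension of the fixed subspace of $\rho_\alpha(c)$ in $V_\alpha$. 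The argument is essentially formal once the $G$-module structures on the $C_i$ are in place; the only real point to verify carefully is the identification of the $G$-set structure on $\Sigma^*$, for which Section~2 supplies the bijection with the orbits of right multiplication by $c$ and hence the isomorphism $\Cset[\Sigma^*]\cong \mathrm{Ind}_{\langle c\rangle}^G \Cset_{\mathrm{triv}}$. Alternatively, the theorem is the special case $H=\{1\}$ of the general multiplicity formula of Section~3 (induction from $\Gamma=G$ to $G$ being trivial), but the CW approach above is self-contained and perhaps the most transparent.
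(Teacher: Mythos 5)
Your proof is correct and follows essentially the same route as the paper: the identity $[H_1^{(0)}(M,\Cset)]=[\Cset[G]]-[\Cset[\Sigma^*]]$ is exactly the paper's formula (2.4), which the authors also obtain from the cell structure (via the homology exact sequences of $\Sigma^*\subset \mathrm{Sk}(M)\subset M$), and your identification $\Cset[\Sigma^*]\cong \mathrm{Ind}_{\langle c\rangle}^G\Cset_{\mathrm{triv}}$ plus Frobenius reciprocity is just a streamlined version, valid in the regular case $H=\{1\}$, of their general character computation of $m_\alpha$. No gaps; the $K_0$/Euler-characteristic packaging is a clean way to present the special case.
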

By considering several distinct cases, we prove from the general formula the following interesting fact
\begin{theorem}
For any origami $M$, the multiplicity  $\ell_{\alpha}$ of any irreducible representation $\rho_{\alpha}$ of $\textrm{Aut}(M)$ in  $H_1^{(0)} (M,\Cset)$ is never equal to $1$.
\end{theorem}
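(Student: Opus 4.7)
The plan is to combine the explicit formula for $\ell_\alpha$ established in the previous section with a short linear-algebra observation. That formula expresses $\ell_\alpha$ as the codimension of the fixed subspace of $\widetilde\rho_\alpha(c)$ acting on $\widetilde V_\alpha := \mathrm{Ind}_N^G V_\alpha$, where $\rho_\alpha$ is viewed as a representation of $N$ via the quotient $N \to N/H = \Gamma$ and $c = [g_r,g_u]$ is the commutator.

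The linear-algebra observation reads: if $A \in GL(V)$ has finite order and $\det A = 1$, then the codimension of the fixed subspace of $A$ is never equal to $1$. Indeed, $\mathrm{rank}(A - I) = 1$ would yield $A - I = v \otimes \phi$ for some nonzero $v \in V$ and $\phi \in V^*$; the identity $\det A = 1 + \phi(v) = 1$ then forces $\phi(v) = 0$, so $(v \otimes \phi)^2 = 0$ and $A^n = I + n\,(v \otimes \phi)$ for every $n \geq 1$. The finite order of $A$ forces $v \otimes \phi = 0$, contradicting $\mathrm{rank}(A - I) = 1$. Applied to $A = \widetilde\rho_\alpha(c)$ this concludes the argument, since $c$ is a commutator --- so $\det \widetilde\rho_\alpha(c) = 1$ in every linear representation --- and $G$ is finite, so $\widetilde\rho_\alpha(c)$ has finite order.

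I expect the main difficulty to be of a bookkeeping nature: verifying that the general formula reads uniformly as a codimension of a fixed subspace in every relevant case. The ``several distinct cases'' alluded to in the statement presumably split according to whether $\widetilde\rho_\alpha(c)$ acts trivially on $\widetilde V_\alpha$ (in which case $\ell_\alpha = 0$) or nontrivially (in which case the lemma gives $\ell_\alpha \geq 2$), and also cover the trivial representation $\rho_\alpha$: there $\widetilde\rho_\alpha(c)$ is the permutation matrix for the $c$-action on $G/N$, and the determinant-one condition manifests itself as the evenness of this permutation, excluding precisely the transposition-type configuration that would produce codimension exactly one. A backup approach for the trivial case, if the uniform formula is delicate, is to use the transfer isomorphism $H_1(M,\Cset)^\Gamma \cong H_1(M/\Gamma,\Cset)$ together with the inclusion $H_1^{st}(M,\Cset) \subset H_1(M,\Cset)^\Gamma$ and the evenness of the first Betti number of the closed orientable $2$-orbifold $M/\Gamma$.
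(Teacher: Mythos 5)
Your argument is correct, and it takes a genuinely different and more uniform route than the paper. The one point you leave implicit is the repackaging of Theorem \ref{t.3-3}: since $\psi_{\alpha}(c)$ permutes the blocks $g_i^{-1}E_{\alpha}$ of $F_{\alpha}=\mathrm{Ind}_N^G E_{\alpha}$ according to $\rho$, its fixed space decomposes over the cycles $C$ of $\rho$, and on each cycle it is isomorphic to the fixed space of the return map $\psi_{\alpha}(c^{n_C})$ on a single block, i.e.\ has dimension exactly $f_{\alpha}(C)$; hence
\begin{equation*}
\ell_{\alpha} \;=\; \dim\psi_{\alpha} - \sum_{C} f_{\alpha}(C) \;=\; \mathrm{codim}_{F_{\alpha}}\,\mathrm{Fix}\bigl(\psi_{\alpha}(c)\bigr),
\end{equation*}
which is the form you use. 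Granting that one-line verification, your rank-one lemma (finite order together with $\det A=1$ forbids $\mathrm{rank}(A-I)=1$) applies verbatim to $A=\psi_{\alpha}(c)$, since $\psi_{\alpha}$ is a linear representation of the finite group $G$ and $c$ is a commutator \emph{in $G$}. The paper instead argues in two cases: when $M$ is not quasiregular it shows $\ell_0\geq 2$ (the permutation induced by $c$ on $G/N$ is even and nontrivial) and invokes the monotonicity $\ell_{\alpha}\geq\ell_0$ of Corollary \ref{c.3-4}; when $M$ is quasiregular it writes $\ell_{\alpha}$ as a sum of codimensions of fixed spaces of the conjugates $g_r^ig_u^jcg_u^{-j}g_r^{-i}$ acting on the irreducible block $E_{\alpha}$ and exploits that their product is a commutator in $N$ (Proposition \ref{p.3-14}), running the determinant argument factor by factor. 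Your version collapses this case analysis and the commutator-product bookkeeping into a single application of the determinant argument at the level of the induced representation, which is where the hypothesis ``$c\in[G,G]$'' lives most naturally; what the paper's longer route buys is the finer intermediate information (the bound $\ell_{\alpha}\geq\ell_0$ and the structure theory of quasiregular origamis) used elsewhere in the text. Your closing worry about a separate treatment of the trivial representation is unnecessary: inducing the trivial character gives the permutation representation on $G/N$, which is covered by the same lemma, and the backup via $H_1(M/\Gamma)$ is not needed.
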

The formula for the multiplicity suggests to consider a class of origamis slightly larger than the class of regular origamis, namely those for which the trivial representation is
not a sub-representation of  $H_1^{(0)} (M,\Cset)$. We obtain a structure theorem for such {\it quasi regular} origamis.
\begin{theorem}
The origami associated to the data $(G,g_r, g_u,H)$ is quasi regular iff $N$ is normal in $G$ and $G/N$ is abelian.
\end{theorem}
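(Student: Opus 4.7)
The plan is to apply the multiplicity formula of Section~3 to the trivial representation $\rho_0$ of $\Gamma = N/H$ and translate the condition $\ell_0 = 0$ into the stated group-theoretic property of $N$ in $G$. Regarded as the trivial representation of $N$ (by inflation along $N \to N/H$), $\rho_0$ induces to $G$ the permutation representation $\Cset[G/N]$ on the cosets. The general formula of Section~3, which expresses $\ell_\alpha$ as the codimension of the $c$-fixed subspace of $\mathrm{Ind}_N^G V_\alpha$, specializes at $\alpha = 0$ to
\[
\ell_0 \;=\; [G:N] \;-\; \#\{\langle c\rangle\text{-orbits on } G/N\}.
\]

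Thus $\ell_0 = 0$ iff every $\langle c\rangle$-orbit on $G/N$ is a singleton, iff $cgN = gN$ for all $g \in G$, iff $g^{-1}cg \in N$ for every $g \in G$, iff the normal closure $\langle\langle c\rangle\rangle^G$ of $c$ in $G$ is contained in $N$. It remains to identify this normal closure: since $G = \langle g_r, g_u\rangle$ and $c = [g_r, g_u]$, the quotient $G/\langle\langle c\rangle\rangle^G$ is generated by the commuting images of $g_r$ and $g_u$, hence is abelian, so $[G,G] \subseteq \langle\langle c\rangle\rangle^G$; the opposite inclusion is immediate since $[G,G]$ is normal and contains $c$. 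Therefore $\langle\langle c\rangle\rangle^G = [G,G]$, and the condition $\ell_0 = 0$ reads $[G,G] \subseteq N$, which is the standard characterization of ``$N$ is normal in $G$ with $G/N$ abelian'' (indeed $N \supseteq [G,G]$ forces $N/[G,G]$ to be a subgroup of the abelian group $G/[G,G]$, hence normal, hence $N \triangleleft G$ with $G/N$ a quotient of an abelian group; the converse is immediate).

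The main obstacle is the first step: carefully recognizing $\mathrm{Ind}_N^G \rho_0$ as the permutation module $\Cset[G/N]$ and describing its $c$-fixed subspace concretely as functions constant on $\langle c\rangle$-orbits, so that the general formula of Section~3 reduces to the simple coset count above. Once this codimension-of-fixed-space description is in place, the remaining steps are routine group-theoretic manipulations, and both implications of the iff are obtained simultaneously from the chain of equivalences.
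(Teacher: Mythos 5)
Your proof is correct and follows essentially the same route as the paper: it reduces quasiregularity via the multiplicity formula to the condition that all conjugates of $c$ lie in $N$ (the content of Proposition \ref{p.3-9}), and then finishes with elementary group theory. The only cosmetic difference is that you identify the normal closure of $c$ with $[G,G]$ and use $[G,G]\subseteq N$ directly, whereas the paper works with the core $N_0=\bigcap_g gNg^{-1}$ and deduces normality of $N$ from the fact that $G/N_0$ is abelian; the two arguments are interchangeable.
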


\medskip

 The end of Section $3$ is a preparation to the study of the affine group. We consider the homology group  $H_1^{(0)} (M,\Rset)$ with {\bf real} coefficients and
 its decomposition into isotypic components. Each such component $W_a$  is of {\it  real, complex} or {\it quaternionic} type. We discuss in each case the nature of the group
 $Sp(W_a)$ consisting of the  $\Gamma$-automorphisms of $W_a$ which preserve the restriction of the symplectic form. The group $Sp(W_a)$ is isomorphic to a symplectic
 group in the real case, a complex unitary group $U_{\Cset}(p,q)$ in the complex case, a quaternionic unitary group $U_{\Hset}(p,q)$ in the quaternionic case. The material is fairly
 classical and is recalled for the sake of the reader not familiar with classical representation theory.

 \medskip

 In Section $4$, we deduce consequences for the Lyapunov exponents of KZ cocycle over the $SL(2,\Rset)$-orbit of  an origami, equipped with the probability measure derived from
  Haar measure on  $SL(2,\Rset)$. Indeed the decomposition of $H_1$ into $H_1^{st}$ and $H_1^{(0)}$ is preserved by the affine group, and the decomposition of $H_1^{(0)}$
  into isotypic components is preserved by a subgroup of finite index of the affine group. Moreover, this subgroup acts on each component $W_a$ by elements of $Sp(W_a)$.
 This allows to split the nontrivial Lyapunov exponents into subfamilies associated to the $W_a$. Regarding the multiplicity of the Lyapunov exponents in each subfamily, we obtain
 \begin{theorem}
 Let $\rho_a: \Gamma \to V_a$ be an $\Rset$-irreducible representation of $\Gamma$. Let $W_a$ be the associated isotypic  component in  $H_1^{(0)}(M,\Rset)$.
 \begin{enumerate}
 \item Each factor in the Oseledets decomposition of $W_a$ is a $\Gamma$-submodule. Therefore the multiplicity in $W_a$ of each Lyapunov exponent is a multiple
 of the dimension (over $\Rset$) of $V_a$.
 \item When $V_a$ is complex or quaternionic, and $Sp(W_a)$ is isomorphic to an unitary group $U(p,q)$, the multiplicity in $W_a$ of the exponent $0$ is at least
 $|q-p| \dim_{\Rset} V_a$.
 \end{enumerate}
 \end{theorem}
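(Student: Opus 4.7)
The strategy is to exploit the finite-index subgroup $\textrm{Aff}_+^\ast(M)\subset\textrm{Aff}_+(M)$ alluded to in the excerpt: the subgroup that preserves the isotypic decomposition $H_1^{(0)}(M,\Rset)=\bigoplus_a W_a$ and hence acts on each $W_a$ by $\Gamma$-equivariant symplectic automorphisms, i.e.\ through $Sp(W_a)$. Passing to this subgroup does not alter the Lyapunov spectrum (finite-index inclusions only rescale cocycle time), and it puts us in a position to apply Oseledets' theorem separately on each $W_a$.

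\emph{Part (1).} Once the cocycle on $W_a$ takes values in $Sp(W_a)$, every cocycle matrix commutes with the $\Gamma$-action. By the uniqueness clause in Oseledets' theorem, any linear endomorphism of $W_a$ commuting with the cocycle preserves each Oseledets subspace; applied to the elements of $\Gamma$, this shows that every Oseledets subspace of $W_a$ is a $\Gamma$-submodule. Since $W_a$ is $\Gamma$-isotypic of type $V_a$, any such submodule has real dimension divisible by $\dim_{\Rset}V_a$, which yields the multiplicity statement.

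\emph{Part (2).} By the material from the end of Section~3 we may write $W_a\simeq V_a\otimes_{\Kset} M_a$, where $\Kset\in\{\Cset,\Hset\}$ matches the type of $V_a$, the multiplicity space $M_a$ carries an indefinite $\Kset$-Hermitian form $h$ of signature $(p,q)$, and $Sp(W_a)$ acts as $\textrm{Id}_{V_a}\otimes U_{\Kset}(p,q)$. The cocycle thus restricts to $\textrm{Id}_{V_a}\otimes C_n$ with $C_n$ a $U_{\Kset}(p,q)$-valued cocycle on $M_a$, and the Oseledets decomposition of $W_a$ is obtained by tensoring the Oseledets decomposition of $M_a$ with $V_a$. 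The plan is then to invoke the classical fact that, for any $U(p,q)$-valued cocycle, the sum of the Oseledets subspaces associated with strictly positive (resp.\ strictly negative) exponents is totally $h$-isotropic---a short calculation using $h(v,w)=h(C_nv,C_nw)$ in the past direction shows that $h(v,w)$ is bounded by a quantity decaying exponentially---and hence has $\Kset$-dimension at most $\min(p,q)$. Consequently the zero exponent has $\Kset$-multiplicity at least $(p+q)-2\min(p,q)=|p-q|$ in $M_a$, and tensoring with $V_a$ produces a zero-exponent subspace of $W_a$ of real dimension at least $|p-q|\dim_{\Rset}V_a$.

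The main obstacle is the finite-index reduction: one must verify that the subgroup of $\textrm{Aff}_+(M)$ preserving the isotypic decomposition and acting by $\Gamma$-equivariant symplectic automorphisms on each $W_a$ truly has finite index, so that its action captures the Lyapunov spectrum of the KZ cocycle on $W_a$. Granted this---the main content of the opening of Section~4---both conclusions reduce to standard linear-algebra facts about Oseledets decompositions of cocycles valued in symplectic or indefinite-unitary groups, combined with the multiplicity structure of isotypic $\Gamma$-components recalled in Section~3.
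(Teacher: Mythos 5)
Your proposal is correct and follows essentially the same route as the paper: part (1) rests on the commutation of $\Gamma$ with the cocycle matrices taken in $\textrm{Aff}_{**}(M)$ (so that, with a $\Gamma$-invariant norm, the Oseledets growth rates are $\Gamma$-invariant), and part (2) on the isotropy of the stable and unstable Oseledets subspaces for the invariant nondegenerate form of signature $(p,q)$. The only cosmetic difference is that you run the signature argument on the multiplicity space with the $\Kset$-valued Hermitian form, whereas the paper works directly on $W_a$ with the associated real symmetric form of signature $(p\dim_{\Rset}V_a,\,q\dim_{\Rset}V_a)$; the resulting bound is identical.
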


\medskip

In the last section, we construct and study several (families of) examples. We first consider a family of quasi regular origamis, indexed by an integer $n \ge 2$, such that the
automorphism group $\Gamma$ is isomorphic to the symmetric group $S_n$. One interesting feature of these origamis, compared to the regular origamis
based on the symmetric groups,
 is that the commutator $c$ of the two generators of $G$ correspond to a transposition which is not a commutator in $\Gamma$. In the next subsection, we construct a quasi regular
 origami such that the quotient $G/N$ is not cyclic. Then, we consider a family of regular origamis associated to the simple groups of Lie type $SL(2,\Fset_p)$, with a rather natural
 pair of unipotent generators. We compute the multiplicities of the irreducible representations in  $H_1^{(0)}$ from the general formula of Section $3$, but stop short of determining
  the signatures $(p,q)$ in the complex or quaternionic cases.

  \smallskip

  We don't know of any regular origami where the affine group permutes in a nontrivial way the isotypic components of  $H_1^{(0)}(M,\Rset)$. According to Section $4$, this would give even higher multiplicity to the Lyapunov exponents of the KZ-cocycle. We construct in Subsection 5.4 a poor substitute for that example.

\section*{Acknowledgements} The authors are thankful to the Coll\`ege de France (Paris), the Laboratoire Analyse, G\'eom\'etrie et Applications (Universit\'e Paris 13),  the Department of Mathematics of Universit\'e Paris-Sud and  the Instituto de Matem\'atica Pura e Aplicada  (IMPA, Rio de Janeiro)
 for their hospitality and support during the preparation of this article. The  authors were also supported by the French ANR grant ``GeoDyM'' (ANR-11-BS01-0004) and by the Balzan Research Project of J. Palis.

\section{Preliminaries}

\subsection{Origamis}
\begin{definition}
A {\it square-tiled surface} (or {\it origami}) is a ramified covering $\pi: M \rightarrow \Tset^2:=\Rset^2 / \Zset^2$ from a {\bf connected}
 surface to the standard torus which is unramified off $0 \in \Rset^2 / \Zset^2$.
%It is {\it primitive} if $p$ cannot be factored
% as $p=p_2 \circ p_1$, with $p_1: M \rightarrow \Rset^2 / \Zset^2$ square-tiled and $p_2: \Rset^2 / \Zset^2 \rightarrow \Rset^2 / \Zset^2$ of
% degree $>1$.
We will denote by $\Sigma^*$ the fiber $p^{-1}(0)$. It contains the set $\Sigma$ of ramification points
of $\pi$.

The {\it squares} are the connected components of the inverse image $\pi^{-1}((0,1)^2)$.
\end{definition}

The set ${\rm Sq}(M)$ of squares is finite and equipped with two one-to-one self maps
$r$ (for right) and $u$ (for up) which associate
to a square the square to the right of it and above it (resp.). The connectedness of the surface means that the group of permutations of
${\rm Sq}(M)$ generated by $r$ and $u$ acts transitively on ${\rm Sq}(M)$. Conversely, a finite set $\mathcal O$, equipped
with two one-to-one maps
$r$  and $u$ such that the group of permutations
 generated by $r$ and $u$ acts transitively on $\mathcal O$, defines a square-tiled surface.

\begin{definition}
An {\it automorphism} of the origami  $\pi: M \rightarrow \Tset^2$ is a homeomorphism $f$ of $M$ which satisfies $\pi \circ f = \pi$. It induces a one-to-one map $f^*$ on ${\rm Sq}(M)$ commuting with $r$ and $u$. Conversely, any such map determines a unique automorphism of $M$. We denote the group of automorphisms of $M$ by ${\rm Aut}(M)$.
\end{definition}

 \smallskip

 The following definitions were introduced in \cite{Zm}, where the corresponding point of view was extensively developed.

 \begin{definition}

 The subgroup of the permutation group $\mathcal S ( {\rm Sq}(M))$ generated  by $r$ and $u$ is called the monodromy group of $M$ and denoted by $\mathcal Mon(M)$.
\end{definition}
Observe that the stabilizers of the points of ${\rm Sq}(M)$ form a conjugacy class of subgroups of $\mathcal Mon(M)$ whose intersection is reduced to the identity.

Conversely, let $G$ be a finite group and $g_r, g_u$ be two elements of $G$ which span $G$. Let $H$ be a subgroup of $G$ which does not contain a normal subgroup of $G$ distinct from the identity. We associate to these data an origami by taking for squares the  right classes mod $H$, and defining
$$ r(Hg) = Hgg_r, \quad u(Hg) =Hgg_u.$$
The monodromy group of this origami is then canonically isomorphic to $G$ and the stabilizer of the class of the identity element is equal to $H$.

Observe that two sets of data $(G,g_r, g_u,H)$, $(G',g'_r, g'_u,H')$ determine isomorphic origamis iff there exists an isomorphism $\Phi$ from $G$ onto $G'$ such that $\Phi(g_r) = g'_r, \Phi(g_u) = g'_u$ and $\Phi(H)$ is conjugate to $H'$.
In particular, the group of automorphisms of the origami determined by $(G,g_r, g_u,H)$ is canonically isomorphic to the quotient $N(H)/H$ of the normalizer $N(H)$ of $H$ by $H$, acting on the left on $\mathcal O$ by
$$n.Hg = Hng.$$

\begin{definition}

An origami $\pi: M \rightarrow \Tset^2$ is \emph{regular} if the automorphism group ${\rm Aut}(M)$  acts transitively on ${\rm Sq}(M)$. This is equivalent to ask that the stabilizer in the monodromy group of some (any) element of ${\rm Sq}(M)$
is reduced to the identity, i.e., we have $H=\{1\}$ in the construction above.

\end{definition}

See \cite{Zm} for some examples of regular origamis.

\medskip

Until the end of Section 4,  we fix an origami $\pi: M \rightarrow \Tset^2$, presented as above from a group $G$, a pair of generators $g_r,g_u$, and a subgroup $H$ which does not contain a normal subgroup distinct from the identity. We thus identify ${\rm Sq}(M)$ with $H \backslash G$.
We denote by $N$ the normalizer of $H$ in $G$ and by $c$ the commutator $c=g_r g_u g_r^{-1}g_u^{-1}$ of the preferred generators of $G$.  The subset $\Sigma^* = p^{-1}(0)$ is identified with the set of orbits for the action of the subgroup $<c>$ generated by $c$ on $H \backslash G$. This action also determines the stratum of the moduli space of translation surfaces containing $M$.

\subsection{Exact sequences for homology groups}

Let $\Kset$ be a subfield of $\Cset$. Consider the $\Kset$-vector space $\Kset(M):= \Kset^{H \backslash G}$ having a canonical basis $(e_s)$ indexed by the squares of $M$. Consider also  the sum $\Kset(M) \oplus \Kset(M)$ of two copies of $\Kset(M)$. We write $(\sigma_s)_{s\in H \backslash G}$ for the canonical basis of the first factor and
$(\zeta_s)_{s \in H \backslash G}$ for the canonical basis of the second factor. There is a canonical map $\varpi$ from $\Kset(M) \oplus \Kset(M)$ to the relative homology group $H_1(M,\Sigma^*, \Kset)$ defined as follows: for each $s \in H \backslash G$

\begin{itemize}
\item  $\varpi(\sigma_s)$ is the homology class associated to the lower side of the square $s$, going rightwards;
\item $\varpi(\zeta_s)$ is the homology class associated to the left side of the square $s$,  going upwards.
\end{itemize}
We write $\Box_s$ for the element
$$\Box_s = \sigma_s + \zeta_{sg_r} - \sigma_{sg_u} -\zeta_s,$$
of $\Kset(M) \oplus \Kset(M)$.
Its image in $H_1(M,\Sigma^*, \Kset)$ is equal to $0$. Moreover, we have
$$\sum_s \Box_s  = 0.$$
We denote by $\iota$ the linear map from $\Kset(M)$ to $\Kset(M) \oplus \Kset(M)$ sending $e_s$ to $\Box_s$ for all $s \in
H \backslash G$.
Observe that $\Kset(M)$, $\Kset(M) \oplus \Kset(M)$ and $H_1(M,\Sigma^*, \Kset)$ are naturally equipped with the structure of left ${\rm Aut}(M)$-modules and the maps $\iota$, $\varpi$ are morphisms for these structures. We also equip $\Kset$ with the structure of a trivial left ${\rm Aut}(M)$-module and define $\epsilon: \Kset \rightarrow \Kset(M)$ to be the morphism sending $1$ to $\sum_s e_s$.

\smallskip
We obtain thus a resolution of the relative homology group:

\begin{proposition}
The following is an exact sequence of  left ${\rm Aut}(M)$-modules:
\begin{equation}
0 \rightarrow \Kset \xrightarrow{\epsilon} \Kset(M)\xrightarrow{\iota} \Kset(M) \oplus \Kset(M) \xrightarrow{\varpi} H_1(M,\Sigma^*, \Kset) \rightarrow 0.
\end{equation}

\end{proposition}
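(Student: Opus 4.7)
The plan is to recognize this sequence as (essentially) the augmented cellular chain complex of the pair $(M,\Sigma^*)$, with the squares as $2$-cells and the horizontal and vertical sides of squares as $1$-cells. Under this identification, $\Kset(M) = C_2(M,\Sigma^*;\Kset)$, $\Kset(M)\oplus\Kset(M) = C_1(M,\Sigma^*;\Kset)$, and $\iota$ is the cellular boundary map $\partial_2$: the counterclockwise boundary of the $2$-cell labelled $s$ consists precisely of the four oriented sides whose classes in $H_1(M,\Sigma^*,\Kset)$ are $\varpi(\sigma_s)$, $\varpi(\zeta_{sg_r})$, $-\varpi(\sigma_{sg_u})$, $-\varpi(\zeta_s)$.

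First I would verify ${\rm Aut}(M)$-equivariance. The element $\sum_s e_s$ is fixed by any permutation of squares, so $\epsilon$ is equivariant; automorphisms commute with $r$ and $u$, hence preserve the defining expression for each $\Box_s$, so $\iota$ is equivariant; and $\varpi$ is equivariant by construction since an automorphism permutes the oriented sides of squares compatibly with its action on ${\rm Sq}(M)$.

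Second I would handle exactness at $\Kset$ and at $\Kset(M)$. Injectivity of $\epsilon$ is immediate, and $\iota\circ\epsilon = 0$ is the stated identity $\sum_s\Box_s = 0$. For the converse inclusion $\ker\iota\subseteq\mathrm{im}\,\epsilon$, I would compute directly: writing $x = \sum_s x_s\, e_s$, the coefficient of $\sigma_t$ in $\iota(x)$ is $x_t - x_{tg_u^{-1}}$ and that of $\zeta_t$ is $x_{tg_r^{-1}} - x_t$, so $\iota(x)=0$ forces the function $s\mapsto x_s$ to be right-invariant under both $g_r^{-1}$ and $g_u^{-1}$, hence under $G = \langle g_r, g_u\rangle$; since $G$ acts transitively on $H\backslash G$, all the $x_s$ coincide and $x \in \mathrm{im}\,\epsilon$.

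Third, for exactness at $\Kset(M)\oplus\Kset(M)$ and surjectivity of $\varpi$, I would appeal to the cellular interpretation. Each $\varpi(\Box_s)$ is the class of the oriented boundary of the $2$-cell $s$ and therefore vanishes, giving $\mathrm{im}\,\iota\subseteq\ker\varpi$. The reverse inclusion and the surjectivity of $\varpi$ are then the very definition of relative cellular homology: since all vertices of the cell decomposition lie in $\Sigma^*$, the relative $0$-chain group is trivial, so $H_1(M,\Sigma^*;\Kset)$ coincides with $C_1(M,\Sigma^*;\Kset)/\partial_2 C_2(M,\Sigma^*;\Kset)$, i.e. with $(\Kset(M)\oplus\Kset(M))/\mathrm{im}\,\iota$. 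The main obstacle is really only this last identification, pinning down $\ker\varpi$; everything else reduces either to an invariance check or to the combinatorial computation using transitivity of the $G$-action on $H\backslash G$.
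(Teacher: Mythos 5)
Your proof is correct and is essentially the paper's argument in different clothing: the authors invoke the exact homology sequence of the triple $\Sigma^* \subset {\rm Sk}(M) \subset M$, which is precisely the cellular chain complex of the CW pair $(M,\Sigma^*)$ that you use. The only substantive variation is at the left end, where the paper deduces exactness at $\Kset$ and $\Kset(M)$ from $H_2({\rm Sk}(M),\Sigma^*,\Kset)=0$ together with the identification $H_2(M,\Sigma^*,\Kset)\simeq\Kset$, while you obtain the same conclusion by an explicit computation of $\ker\iota$ using the transitivity of the action of $\langle g_r,g_u\rangle$ on $H\backslash G$ --- a slightly more elementary route to the same place.
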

\begin{proof}
Denote by ${\rm Sk}(M)$ the complement in $M$ of the union of the (open) squares of $M$. The exact sequence above is nothing else than the exact homology sequence associated to the triple $\Sigma^* \subset {\rm Sk}(M) \subset M$: we have $H_2({\rm Sk}(M),\Sigma^*,\Kset)= H_1(M,{\rm Sk}(M),\Kset)=0$, and the homology groups $H_2(M,\Sigma^*,\Kset)$, $H_2(M,{\rm Sk}(M),\Kset)$,  $H_1({\rm Sk}(M),\Sigma^*,\Kset)$ identify respectively with $\Kset$, $\Kset(M)$ and $\Kset(M) \oplus \Kset(M)$.
\end{proof}

\begin{corollary}
As a ${\rm Aut}(M)$-module, the relative homology group $H_1(M,\Sigma^*, \Kset)$ is isomorphic to the direct sum of $\Kset(M) \oplus \Kset$.
\end{corollary}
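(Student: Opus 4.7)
The plan is to derive the corollary by splitting the four-term exact sequence from the proposition into short exact sequences and exploiting complete reducibility.

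First I would note the crucial background hypothesis: $\text{Aut}(M)$ is a finite group and $\Kset$ has characteristic zero (being a subfield of $\Cset$). By Maschke's theorem, the category of $\Kset[\text{Aut}(M)]$-modules is semisimple, so every short exact sequence of $\text{Aut}(M)$-modules splits.

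Next I would break the exact sequence
\begin{equation*}
0 \to \Kset \xrightarrow{\epsilon} \Kset(M) \xrightarrow{\iota} \Kset(M) \oplus \Kset(M) \xrightarrow{\varpi} H_1(M,\Sigma^*,\Kset) \to 0
\end{equation*}
into two short exact sequences via the image $I := \iota(\Kset(M))=\ker \varpi \subset \Kset(M)\oplus\Kset(M)$:
\begin{equation*}
0 \to \Kset \to \Kset(M) \to I \to 0, \qquad 0 \to I \to \Kset(M) \oplus \Kset(M) \to H_1(M,\Sigma^*,\Kset) \to 0.
\end{equation*}
By semisimplicity both split, giving $\Kset(M) \cong \Kset \oplus I$ and $\Kset(M) \oplus \Kset(M) \cong I \oplus H_1(M,\Sigma^*,\Kset)$ as $\text{Aut}(M)$-modules. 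Substituting $I \cong \Kset(M)/\Kset$ obtained from the first isomorphism (again up to a choice of complement, which exists by semisimplicity) into the second yields
\begin{equation*}
\Kset(M) \oplus \Kset(M) \;\cong\; \bigl(\Kset(M)/\Kset\bigr) \oplus H_1(M,\Sigma^*,\Kset).
\end{equation*}

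Finally, one can phrase the conclusion most cleanly through the Grothendieck group $K_0(\Kset[\text{Aut}(M)])$: the alternating sum of classes of terms in the original resolution is zero, hence
\begin{equation*}
[H_1(M,\Sigma^*,\Kset)] = 2[\Kset(M)] - [\Kset(M)] + [\Kset] = [\Kset(M)] + [\Kset].
\end{equation*}
In a semisimple category, isomorphism of modules is detected by equality of classes in $K_0$ (both are determined by the vector of multiplicities of the irreducible $\text{Aut}(M)$-representations). We therefore conclude $H_1(M,\Sigma^*,\Kset) \cong \Kset(M) \oplus \Kset$ as $\text{Aut}(M)$-modules. There is no real obstacle here; the only point requiring care is the appeal to Maschke (which fails in positive characteristic), and in fact the conclusion should be understood as an abstract isomorphism rather than a canonical one, since the splittings depend on a choice of invariant complement.
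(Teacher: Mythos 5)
Your proposal is correct and is essentially the argument the paper intends: the corollary is stated as an immediate consequence of the four-term exact sequence of Proposition 2.6, using semisimplicity of $\Kset[{\rm Aut}(M)]$ (Maschke, valid since $\Kset\subset\Cset$ has characteristic zero) to split the sequence and cancel, exactly as you do. The only difference is that the paper leaves this routine deduction implicit, while you have written it out (including the correct caveat that the isomorphism is non-canonical).
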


Recall the homology exact sequence of left ${\rm Aut}(M)$-modules:

\begin{equation}
0 \rightarrow H_1(M, \Kset) \rightarrow H_1(M,\Sigma^*, \Kset) \rightarrow H_0(\Sigma^*, \Kset)\rightarrow \Kset \rightarrow  0.
\end{equation}

This will be used to determine the structure of left ${\rm Aut}(M)$-module of the absolute homology group $H_1(M, \Kset)$:
the relative  homology group $H_1(M,\Sigma^*, \Kset)$ is given by the corollary above, and  we will study in the next section the left ${\rm Aut}(M)$-module
$H_0(\Sigma^*, \Kset)$.

\medskip

On the other hand, one has a direct sum decomposition into ${\rm Aut}(M)$-submodules (see \cite{MY})
\begin{equation}
 H_1 (M, \Kset) = H_1^{st} (M, \Kset) \oplus H_1^{(0)} (M,\Kset).
\end{equation}
Here, $H_1^{st} (M, \Kset)$ is the $2$-dimensional subspace (with trivial ${\rm Aut}(M)$-action) generated by $\sum_s \sigma_s$ and $\sum_s \zeta_s$, while $H_1^{(0)} (M,\Kset)$ is the kernel of the map $\pi^*: H_1 (M, \Kset) \rightarrow H_1(\Tset^2,\Kset)$.
\smallskip

Observe that the two copies of $\Kset$ in $H_1 (M, \Kset)$ coming from the exact sequences (2.1) and (2.2) correspond to
$H_1^{st} (M, \Kset)$, so we have (as left ${\rm Aut}(M)$-modules)

\begin{equation}
H_1^{(0)} (M, \Kset) = \Kset(M) \ominus H_0(\Sigma^*, \Kset).
\end{equation}

\section{Structure of the ${\rm Aut}(M)$-module $H_1(M, \Kset)$}

\subsection{Action of $N$ on $\Sigma^*$}

The set $\Sigma^* = p^{-1}(0)$ can be identified with the set of orbits for the action of  $<c>$  on $H \backslash G$,
or equivalently to the set of orbits of the action on $G$ of the product group $H \times <c>$ given by $(h,c^m,g) \mapsto hgc^{-m}$. We will denote by $A_g$ the point of $\Sigma^*$ corresponding to the orbit of $g$ under this action, and by
${\rm Stab}(g) \subset N$ the stabilizer of $A_g$ for the left action of $N$ on $\Sigma^*$. We have
\begin{equation}
{\rm Stab}(g) = N \cap H<gcg^{-1}> = H(N \cap <gcg^{-1}>).
\end{equation}
We will denote by $n(g)$ (resp. $h(g)$) the least positive integer such that $gc^{n(g)}g^{-1} $ belongs to $N$ (resp. $H$). Then $n(g)$ is a divisor of $h(g)$ and $h(g)$ is a divisor of the order $\kappa$ of $c$ in $G$. Moreover
 $N \cap <gcg^{-1}>$ is the cyclic group of order $\frac{\kappa}{n(g)}$ generated by $gc^{n(g)}g^{-1} $ and ${\rm Stab}(g) /H$ is cyclic of order $\frac{h(g)}{n(g)}$. We thus obtain:

\begin{proposition}
The size of the orbit $\Sigma_g$ of $A_g$ under the action of $N$ on $\Sigma^*$ is $\frac{n(g)}{h(g)}\# (N/H) $. The number of elements $g'$ such that $A_{g'}$ belongs to $\Sigma_g$  is $n(g)\# N $.
\end{proposition}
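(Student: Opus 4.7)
The plan is to apply the orbit--stabilizer formula twice: once for the left $N$-action on $\Sigma^*$ to get the size of $\Sigma_g$, and once for the $H \times \langle c\rangle$-action on $G$ (whose orbits are, by definition, the points of $\Sigma^*$) in order to count how many elements of $G$ land in $\Sigma_g$.

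For part (1), I would start from the description of the stabilizer already given, $\textrm{Stab}(g) = H(N \cap \langle gcg^{-1}\rangle)$, together with the inclusions $H \subseteq N$ and $H \cap \langle gcg^{-1}\rangle \subseteq N \cap \langle gcg^{-1}\rangle$. The standard product formula then gives
\[
|\textrm{Stab}(g)| \;=\; \frac{|H|\,|N\cap\langle gcg^{-1}\rangle|}{|H\cap\langle gcg^{-1}\rangle|}.
\]
Both intersections are cyclic subgroups of $\langle gcg^{-1}\rangle \cong \Zset/\kappa\Zset$, generated respectively by $gc^{n(g)}g^{-1}$ and $gc^{h(g)}g^{-1}$, so they have orders $\kappa/n(g)$ and $\kappa/h(g)$. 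Substituting and simplifying gives $|\textrm{Stab}(g)| = |H|\,h(g)/n(g)$, and then orbit--stabilizer yields $|\Sigma_g| = |N|/|\textrm{Stab}(g)| = (n(g)/h(g))\,\#(N/H)$, as claimed.

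For part (2), I would count the elements $g' \in G$ such that $A_{g'} \in \Sigma_g$ by summing, over the points of $\Sigma_g$, the size of the fiber of $g \mapsto A_g$ above each such point. The fiber above $A_{g''}$ is exactly the $H \times \langle c\rangle$-orbit of $g''$ in $G$ for the action $(h,c^m)\cdot g = hgc^{-m}$. The stabilizer of $g''$ for this action consists of the pairs $(g''c^m(g'')^{-1}, c^m)$ with $g''c^m(g'')^{-1} \in H$, i.e.\ those with $h(g'') \mid m$; this stabilizer has order $\kappa/h(g'')$, so the orbit has size $|H|\,h(g'')$. The remaining verification is that the functions $g \mapsto h(g)$ and $g \mapsto n(g)$ are constant on the $N$-orbit $\Sigma_g$: if $g'' = ng$ with $n \in N$, the conjugation identity $ng c^m g^{-1}n^{-1} = n(gc^mg^{-1})n^{-1}$ together with $nHn^{-1}=H$ (and $nNn^{-1}=N$, trivially) shows that $h(ng)=h(g)$ and $n(ng)=n(g)$. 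Multiplying the constant fiber size $|H|\,h(g)$ by $|\Sigma_g|$ from part (1) and simplifying produces $n(g)\,|N|$.

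There is no real obstacle here beyond careful bookkeeping with the three cyclic subgroups $\langle c\rangle$, $\langle gc^{n(g)}g^{-1}\rangle$ and $\langle gc^{h(g)}g^{-1}\rangle$, and keeping straight which group is acting on which set; the two assertions are then immediate consequences of two applications of orbit--stabilizer. The only point that needs an explicit (but short) argument, rather than being pure counting, is the $N$-invariance of $h(g)$ on $\Sigma_g$, which is what lets the fiber size be pulled out of the sum in part (2).
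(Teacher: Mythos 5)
Your proof is correct and follows essentially the same route as the paper: the paper's argument is precisely the computation of $\#\,{\rm Stab}(g)$ via ${\rm Stab}(g)=H(N\cap\langle gcg^{-1}\rangle)$ and the orders $\kappa/n(g)$, $\kappa/h(g)$ of the two cyclic intersections, followed by orbit--stabilizer for the $N$-action and for the $H\times\langle c\rangle$-action on $G$. Your write-up merely makes explicit two points the paper leaves implicit (the product formula for $\#\,{\rm Stab}(g)$ and the $N$-invariance of $h(g)$ along the orbit), both of which are verified correctly.
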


The permutation representation of ${\rm Aut}(M)=N/H$ on $\Kset ^{\Sigma_g}$ is induced by the trivial representation of  ${\rm Stab}(g)/H$. The character $\chi_g$ of this representation is given, for $n \in N$ with image $\bar n \in N/H$, by
\begin{eqnarray*}
 \chi_{g} (\bar n) &=& (\# {\rm Stab}(g)/H)^{-1} \sum_{\bar\nu\in N/H} {\bf 1}_{{\rm Stab}(g)/H}(\bar \nu \bar n \bar \nu^{-1})\\
                   &=& \frac{n(g)}{h(g)} (\#H)^{-1}\sum_{\nu\in N} {\bf 1}_{{\rm Stab}(g)}(\nu  n \nu^{-1}).
  \end{eqnarray*}
Thus, the character $\chi_{\Sigma^*}$ of the  representation of ${\rm Aut}(M)=N/H$ on $H_0(\Sigma^*, \Kset)$ is given by
\begin{eqnarray*}
\chi_{\Sigma^*}(\bar n) &=& \sum_{g\in G} (\#N n(g))^{-1} \chi_{g} (\bar n) \\
                       &=& \frac 1{\#N \#H} \sum_{g\in G}  \sum_{\nu\in N} h(g)^{-1} {\bf 1}_{{\rm Stab}(g)}(\nu  n \nu^{-1}).
\end{eqnarray*}

\subsection{Decomposition of $H_0(\Sigma^*, \Cset)$ into isotypic components}

In this subsection, we assume that $\Kset = \Cset$.

\smallskip

Let $\chi_{\alpha}$ be an irreducible character (over $\Cset$) of the group $N/H$. We also write $\chi_{\alpha}$ for the corresponding character of $N$. The multiplicity $m_{\alpha}$ of $\chi_{\alpha}$ in $\chi_{\Sigma^*}$ is given by:

\begin{eqnarray*}
m_{\alpha} &=& (\# N/H)^{-1} \sum_{\bar n\in N/H} \chi_{\alpha}(\bar n) \chi_{\Sigma^*}(\bar n) \\
        &=& (\#N)^{-1} \sum_{n\in N} \chi_{\alpha}(n) \chi_{\Sigma^*}(\bar n) \\
        &=& (\#N)^{-2} (\#H)^{-1} \sum_{\nu\in N} \sum_{g\in G} \sum_{n\in N} h(g)^{-1} \chi_{\alpha}(n){\bf 1}_{{\rm Stab}(g)}(\nu  n \nu^{-1}) \\
        &=& (\#N)^{-2} (\#H)^{-1} \sum_{g\in G} \sum_{s \in {\rm Stab}(g)}\sum_{\nu\in N} \sum_{n\in N} h(g)^{-1} \chi_{\alpha}(\nu n \nu^{-1}) \delta_{s, \nu n \nu^{-1}} \\
        &=& (\#N)^{-2} (\#H)^{-1} \sum_{g\in G} \sum_{s \in {\rm Stab}(g)}\sum_{\nu\in N} h(g)^{-1} \chi_{\alpha}(s) \\
        &=& (\#N)^{-1} (\#H)^{-1} \sum_{g\in G} h(g)^{-1} \sum_{s \in {\rm Stab}(g)}  \chi_{\alpha}(s) .
\end{eqnarray*}

For $g \in G$, we have

\begin{eqnarray*}
(\#H)^{-1} \sum_{s\in {\rm Stab}(g)}  \chi_{\alpha}(s) &=& \sum_{\bar s\in {\rm Stab}(g)/H}  \chi_{\alpha}(\bar s) \\
     &=& \sum_{0 \leq j < \frac {h(g)}{n(g)}} \chi_{\alpha}(gc^{jn(g)}g^{-1}) \\
     &=& \frac {h(g)}{n(g)} \dim Fix_{\alpha}(gc^{n(g)}g^{-1}),
\end{eqnarray*}
where $Fix_{\alpha}(n)$ denotes the subspace of fixed vectors under $n$ for the representation corresponding to $\chi_{\alpha}$. We obtain thus

\begin{proposition} The multiplicity  of $\chi_{\alpha}$ in $\chi_{\Sigma^*}$ is

\begin{equation}
 m_{\alpha} = (\#N)^{-1} \sum_{G} n(g)^{-1} \dim Fix_{\alpha}(gc^{n(g)}g^{-1}).
 \end{equation}
\end{proposition}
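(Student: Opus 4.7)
The plan is to compute the multiplicity $m_\alpha$ as the standard character inner product $\langle \chi_\alpha, \chi_{\Sigma^*}\rangle$ on $N/H$, lifted to a normalized sum over $N$ (using that both characters are trivial on $H$, so descend from $N$). Substituting the formula for $\chi_{\Sigma^*}$ derived in Subsection 3.1 yields a triple sum over $g \in G$, $\nu \in N$, and $n \in N$, with the innermost factor $h(g)^{-1}\chi_\alpha(n)\mathbf{1}_{\mathrm{Stab}(g)}(\nu n\nu^{-1})$.

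The first step is to change variables $s = \nu n\nu^{-1}$ in the $n$-sum and invoke invariance of $\chi_\alpha$ under conjugation: the double sum over $(\nu,n)$ collapses to $\#N \cdot \sum_{s \in \mathrm{Stab}(g)} \chi_\alpha(s)$, and after cancelling one factor of $\#N$ this leaves
$$m_\alpha = (\#N)^{-1}(\#H)^{-1}\sum_{g\in G} h(g)^{-1}\sum_{s\in \mathrm{Stab}(g)}\chi_\alpha(s).$$
Since $\chi_\alpha$ is trivial on $H$, the inner sum over $\mathrm{Stab}(g)$ equals $\#H$ times the corresponding sum over the quotient $\mathrm{Stab}(g)/H$, which by Subsection 3.1 is cyclic of order $h(g)/n(g)$ generated by the image of $gc^{n(g)}g^{-1}$.

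To finish I invoke the standard fact that for a cyclic group $\langle x\rangle$ of order $d$ acting on $V_\alpha$, the averaging operator $\frac{1}{d}\sum_{k=0}^{d-1}\rho_\alpha(x^k)$ is the projector onto $\mathrm{Fix}_\alpha(x)$, hence $\sum_{k=0}^{d-1}\chi_\alpha(x^k) = d\,\dim\mathrm{Fix}_\alpha(x)$. Applied to $x = gc^{n(g)}g^{-1}$ and $d = h(g)/n(g)$, this turns the inner sum into $(h(g)/n(g))\dim\mathrm{Fix}_\alpha(gc^{n(g)}g^{-1})$. The factors $\#H$, $h(g)$, and $h(g)/n(g)$ cancel against $(\#H)^{-1}$ and $h(g)^{-1}$, leaving the claimed $n(g)^{-1}$. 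The computation is essentially formal bookkeeping; the only conceptual input is the cyclic description of $\mathrm{Stab}(g)/H$, which was already established, so I anticipate no serious obstacle.
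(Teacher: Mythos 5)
Your proposal is correct and follows the paper's own argument step for step: the lift of the inner product from $N/H$ to $N$, the substitution of the formula for $\chi_{\Sigma^*}$, the collapse of the $(\nu,n)$-sum by conjugation-invariance of $\chi_\alpha$, the passage to $\mathrm{Stab}(g)/H$ using the cyclic description from Subsection 3.1, and the averaging-projector identity giving $\dim Fix_{\alpha}(gc^{n(g)}g^{-1})$. No discrepancies worth noting.
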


\subsection{Decomposition of $H_1^{(0)}(M, \Cset)$ into isotypic components}

Let $\ell_{\alpha}$ be the multiplicity of $\chi_{\alpha}$ in the character of the $N/H$-module $H_1^{(0)} (M,\Cset)$. The  representation of $N/H$ associated to $\Cset(M)$ is  the direct sum of $[G:N]$ copies of the regular representation. From formula (2.4), we have

$$ \ell_{\alpha} = \frac {\#G}{\#N} \dim (\chi_{\alpha}) - m_{\alpha}.$$

With the formula for $m_{\alpha}$ above, this gives

\begin{eqnarray*}
\ell_{\alpha} &=& \frac {\#G}{\#N}\dim (\chi_{\alpha})- \frac 1{\#N} \sum_G n(g)^{-1} \dim Fix_{\alpha}(gc^{n(g)}g^{-1})\\
           &=& \frac 1{\#N} \sum_G (\dim (\chi_{\alpha}) - n(g)^{-1} \dim Fix_{\alpha}(gc^{n(g)}g^{-1})).
\end{eqnarray*}
Observe that the term in the sum is constant in each class $Ng$ of $G$. Choose representatives $g_1=1, \ldots ,g_r$ of these classes (with $r= \frac {\#G}{\#N}$). We have

\begin{eqnarray*}
\ell_{\alpha} &=& \sum_1^r (\dim(\chi_{\alpha}) - n(g_i)^{-1} \dim Fix_{\alpha}(g_ic^{n(g_i)}g_i^{-1})).
\end{eqnarray*}

The representation associated to $\chi_{\alpha}$ defines a representation of $N$ (by composition with the homomorphism $N \rightarrow N/H$). Let $\psi_{\alpha}$ be the representation of $G$ induced by this representation of $N$. We have

$$\dim \psi_{\alpha} = r \dim \chi_{\alpha}.$$

Write the total space $F_{\alpha}$ of the representation of $\psi_{\alpha}$ as

$$ F_{\alpha} = \oplus_1^r g_i^{-1}. E_{\alpha},$$
where  $E_{\alpha}$ is the total space of the representation of $\chi_{\alpha}$.

Let $\rho$ be the permutation of $\{1,\ldots,r\}$ corresponding to the action of $c$ on $G/N$, i.e., satisfying

$$cg_i^{-1} \in  g_{\rho(i)}^{-1}N.$$

We have then

$$\psi_{\alpha}(c) (g_i^{-1}. E_{\alpha}) = g_{\rho(i)}^{-1}. E_{\alpha}.$$

Thus, for any $1 \leq i \leq r$, the integer $n(g_i)$ is the length of the cycle of $\rho$ containing $i$ and is in particular the same for all elements of this cycle. Observe also that the action of $\psi_{\alpha}(c^{n(g_i)})$
on the invariant subspace $g_i^{-1}. E_{\alpha}$ is conjugate to the action of $\chi_{\alpha}(g_ic^{n(g_i)}g_i^{-1})$
on $E_{\alpha}$. The sum over a cycle $C$ of $\rho$, of length $n_C$, of the terms
$$n(g_i)^{-1} \dim Fix_{\alpha}(g_ic^{n(g_i)}g_i^{-1})$$
is therefore equal to the dimension $f_{\alpha}(C)$ of the fixed space of $\psi_{\alpha}(c^{n_C})$ in any $g_i^{-1}. E_{\alpha}$, $i\in C$. The final formula for $\ell_{\alpha}$ is thus

\begin{theorem}\label{t.3-3}
The multiplicity $\ell_{\alpha}$ of $\chi_{\alpha}$ in the character of the $N/H$-module $H_1^{(0)} (M,\Cset)$ is given by
\begin{equation}
\ell_{\alpha} = \dim \psi_{\alpha} - \sum_{C \,{\rm cycle \;of } \;\rho} f_{\alpha}(C).
\end{equation}
\end{theorem}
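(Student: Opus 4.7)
The plan is to start from equation (2.4), which writes $H_1^{(0)}(M,\Cset)$ as the formal difference $\Cset(M) \ominus H_0(\Sigma^*,\Cset)$ of $N/H$-modules, so that $\ell_\alpha$ is simply the multiplicity of $\chi_\alpha$ in $\Cset(M)$ minus the multiplicity $m_\alpha$ computed in Proposition 3.2. The multiplicity in $\Cset(M)$ is immediate: $N$ acts on $H\backslash G$ with stabilizer $H$ at every point, so $\Cset(M) = \Cset^{H\backslash G}$ is a direct sum of $r = [G:N]$ copies of the regular representation of $N/H$, and thus $\chi_\alpha$ appears there with multiplicity $r \dim\chi_\alpha = \dim\psi_\alpha$ by Frobenius reciprocity.

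For $m_\alpha$ itself, I first collapse the sum over $G$ in formula (3.2) to a sum over $N\backslash G$. If $g' = n_0 g$ with $n_0 \in N$, one checks that $n(g') = n(g)$ and $g' c^{n(g')} g'^{-1} = n_0(g c^{n(g)} g^{-1}) n_0^{-1}$ is $N$-conjugate to $g c^{n(g)} g^{-1}$, hence has the same dimension of $Fix_\alpha$. Choosing representatives $g_1,\ldots,g_r$ of $N\backslash G$ therefore turns (3.2) into
\begin{equation*}
m_\alpha = \sum_{i=1}^r n(g_i)^{-1}\dim Fix_\alpha\bigl(g_i c^{n(g_i)} g_i^{-1}\bigr).
\end{equation*}

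The decisive step is to match this sum with the induced representation $\psi_\alpha$ acting on $F_\alpha = \bigoplus_i g_i^{-1}\cdot E_\alpha$. The relation $c g_i^{-1} \in g_{\rho(i)}^{-1} N$ shows that $g_i c^k g_i^{-1} \in N$ is equivalent to $\rho^k(i) = i$, so $n(g_i)$ coincides with the length $n_C$ of the cycle $C$ of $\rho$ through $i$. Then $\psi_\alpha(c^{n_C})$ preserves each summand $g_i^{-1}\cdot E_\alpha$ and acts there conjugately to $\chi_\alpha(g_i c^{n_C} g_i^{-1})$ on $E_\alpha$; furthermore, for $j = \rho(i)$ a direct computation gives $g_j c^{n_C} g_j^{-1} = n^{-1}(g_i c^{n_C} g_i^{-1}) n$ for some $n \in N$, so the fixed-space dimension is constant along the cycle and can be denoted $f_\alpha(C)$. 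Summing the $n_C$ equal contributions $n_C^{-1} f_\alpha(C)$ over $i \in C$ gives $f_\alpha(C)$, and summing over all cycles yields $m_\alpha = \sum_C f_\alpha(C)$, whence $\ell_\alpha = \dim\psi_\alpha - \sum_C f_\alpha(C)$. The main obstacle is precisely this last bookkeeping: identifying $n(g_i)$ with the cycle length of $\rho$ at $i$, and tracking the $N$-conjugacies that make $f_\alpha(C)$ depend only on the cycle.
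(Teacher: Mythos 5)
Your proposal is correct and follows essentially the same route as the paper: subtract $m_{\alpha}$ from the multiplicity $[G:N]\dim\chi_{\alpha}=\dim\psi_{\alpha}$ of $\chi_{\alpha}$ in $\Cset(M)$, collapse the sum in Proposition 3.2 over the classes of $N$ in $G$, identify $n(g_i)$ with the cycle length of $\rho$ at $i$ via the induced module $F_{\alpha}=\oplus_i g_i^{-1}.E_{\alpha}$, and sum the constant contributions along each cycle to get $m_{\alpha}=\sum_C f_{\alpha}(C)$. Your explicit verification of the $N$-conjugacy $g_{\rho(i)}c^{n_C}g_{\rho(i)}^{-1}=n\,(g_ic^{n_C}g_i^{-1})\,n^{-1}$ just makes precise a step the paper leaves implicit.
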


\begin{corollary}\label{c.3-4}
The multiplicity $\ell_{\alpha}$ is at least equal to the multiplicity $\ell_0$ of the trivial character.
\end{corollary}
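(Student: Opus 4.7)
The plan is to read off both $\ell_{0}$ and $\ell_{\alpha}$ from the formula of Theorem~\ref{t.3-3} and compare them term by term, with no further input required.

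First I would specialize the formula to the trivial character $\chi_{0}$. Here $\dim\chi_{0}=1$, so $\dim\psi_{0}=r$, and the induced representation $\psi_{0}$ is simply the permutation representation of $G$ on $G/N$. Since $\chi_{0}$ is the trivial character, $\psi_{0}(c^{n_{C}})$ acts trivially on each $g_{i}^{-1}.E_{0}$ (it acts as the identity on the fiber and preserves the cycle), so $f_{0}(C)=1$ for every cycle $C$ of $\rho$. Theorem~\ref{t.3-3} then gives
\begin{equation*}
\ell_{0}=r-\#\{\text{cycles of }\rho\}.
\end{equation*}

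Next I would bound $f_{\alpha}(C)$ from above for a general irreducible character $\chi_{\alpha}$. By definition $f_{\alpha}(C)$ is the dimension of the fixed space of $\psi_{\alpha}(c^{n_{C}})$ inside $g_{i}^{-1}.E_{\alpha}$ for any $i\in C$, and this subspace is contained in $g_{i}^{-1}.E_{\alpha}$, whose dimension is $\dim\chi_{\alpha}$. Hence $f_{\alpha}(C)\leq\dim\chi_{\alpha}$. Using $\dim\psi_{\alpha}=r\dim\chi_{\alpha}$, Theorem~\ref{t.3-3} yields
\begin{equation*}
\ell_{\alpha}\;\geq\;r\dim\chi_{\alpha}-\#\{\text{cycles of }\rho\}\cdot\dim\chi_{\alpha}\;=\;\ell_{0}\cdot\dim\chi_{\alpha}\;\geq\;\ell_{0},
\end{equation*}
where the last inequality uses $\dim\chi_{\alpha}\geq 1$. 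This is the desired bound.

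There is no real obstacle: the whole argument is a one-line consequence of Theorem~\ref{t.3-3} once one observes that the dimension of any fixed subspace of an operator on $g_{i}^{-1}.E_{\alpha}$ cannot exceed $\dim\chi_{\alpha}$. One obtains in fact the slightly stronger statement $\ell_{\alpha}\geq\ell_{0}\dim\chi_{\alpha}$, which may be worth recording in the proof.
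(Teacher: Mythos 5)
Your proof is correct and follows essentially the same route as the paper: both arguments rest on the bound $f_{\alpha}(C)\leq\dim\chi_{\alpha}$ together with the identification $\ell_{0}=\sum_{C}(n_{C}-1)=r-\#\{\text{cycles of }\rho\}$, and both in fact establish the intermediate inequality $\ell_{\alpha}\geq\ell_{0}\dim\chi_{\alpha}$ before dropping the factor $\dim\chi_{\alpha}\geq 1$. No gaps.
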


\begin{proof}
Indeed, one has
\begin{eqnarray*}
\ell_{\alpha} & = &  \sum_{C \,{\rm cycle \;of } \;\rho} (n_C \dim \chi_{\alpha}- f_{\alpha}(C))\\
              & \geq &  \sum_{C \,{\rm cycle \;of } \;\rho}  (n_C -1)\dim \chi_{\alpha} \\
              & \geq  &   \sum_{C \,{\rm cycle \;of } \;\rho} (n_C -1) \\
              & = & \ell _0.
\end{eqnarray*}
\end{proof}

\subsection{The case of regular origami}

Assume that $\pi: M \rightarrow \Tset^2$ is a regular origami. In this case, we have $H= \{1\}$ and $G=N$ in the discussion above.
Consequently, we have $n(g)=1$ for all $g \in G$. This gives

\begin{corollary}\label{c.3-5}
In the regular case, the multiplicities of $\chi_{\alpha}$ in $H_0(\Sigma^*, \Cset)$ and $H_1^{(0)} (M,\Cset)$ are given by
\begin{equation}
m_{\alpha} = \dim Fix_{\alpha}(c), \quad \quad \ell_{\alpha} ={\rm codim}\, Fix_{\alpha}(c).
\end{equation}
\end{corollary}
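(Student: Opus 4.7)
The plan is to specialize the formulas of Proposition 3.2 and Theorem 3.3 (or equivalently the identity $\ell_{\alpha} + m_{\alpha} = (\#G/\#N)\dim\chi_{\alpha}$ derived just before the statement of Theorem 3.3) to the situation $H=\{1\}$. First, I would note that the hypothesis $H=\{1\}$ forces $N=G$ (the normalizer of the trivial subgroup) and $\#H=1$; in particular the set of squares $H\backslash G$ is identified with $G$ itself. Since $c\in G=N$, the integer $n(g)$, defined as the least positive integer with $gc^{n(g)}g^{-1}\in N$, equals $1$ for every $g\in G$.

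Next, I would substitute these values into the formula of Proposition 3.2, which becomes
\begin{equation*}
m_{\alpha} \;=\; (\#G)^{-1}\sum_{g\in G}\dim Fix_{\alpha}(gcg^{-1}).
\end{equation*}
The conjugation map $v\mapsto \chi_{\alpha}(g)\cdot v$ furnishes a linear isomorphism between $Fix_{\alpha}(c)$ and $Fix_{\alpha}(gcg^{-1})$, so every summand equals $\dim Fix_{\alpha}(c)$, and averaging yields $m_{\alpha}=\dim Fix_{\alpha}(c)$.

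For $\ell_{\alpha}$, I would then invoke the identity $\ell_{\alpha}=(\#G/\#N)\dim\chi_{\alpha}-m_{\alpha}$ recalled just before Theorem 3.3; in the regular case $\#G=\#N$, so
\begin{equation*}
\ell_{\alpha} \;=\; \dim\chi_{\alpha}-\dim Fix_{\alpha}(c) \;=\; {\rm codim}\,Fix_{\alpha}(c).
\end{equation*}
As a consistency check, Theorem 3.3 specializes to the same answer: with $r=1$ the induced representation $\psi_{\alpha}$ coincides with $\chi_{\alpha}$, the permutation $\rho$ acts on a single element, and the unique cycle $C$ satisfies $n_C=1$ and $f_{\alpha}(C)=\dim Fix_{\alpha}(c)$.

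There is no real obstacle; the corollary is a direct specialization, and the only step requiring explicit justification is the conjugation-invariance of the fixed-space dimension used in computing $m_{\alpha}$.
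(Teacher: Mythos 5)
Your proposal is correct and follows essentially the same route as the paper: the text preceding the corollary simply observes that $H=\{1\}$ and $G=N$ force $n(g)=1$ for all $g$, and then specializes Proposition 3.2 and the identity $\ell_{\alpha}=\frac{\#G}{\#N}\dim\chi_{\alpha}-m_{\alpha}$ exactly as you do (your conjugation-invariance step, which should be written with the representation operator $\rho_{\alpha}(g)$ rather than the scalar $\chi_{\alpha}(g)$, is the only detail the paper leaves implicit).
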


\begin{corollary}\label{c.3-6}
When $\dim \chi_{\alpha}=1$, one has $m_{\alpha}=1$, $\ell_{\alpha}=0$.
\end{corollary}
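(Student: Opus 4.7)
The plan is to deduce the statement directly from Corollary \ref{c.3-5} together with the elementary fact that one-dimensional characters vanish on commutators. Since we are still in the regular case ($H = \{1\}$, $N = G$), Corollary \ref{c.3-5} reduces the computation of both $m_\alpha$ and $\ell_\alpha$ to understanding the fixed space of $\chi_\alpha(c)$ acting on the representation space $E_\alpha$, where $c = g_r g_u g_r^{-1} g_u^{-1}$.

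First I would observe that when $\dim \chi_\alpha = 1$, the representation $\rho_\alpha : G \to \mathrm{GL}(E_\alpha) \cong \Cset^{*}$ is a group homomorphism into an abelian group, so it necessarily factors through the abelianization $G/[G,G]$. Next, I would note that $c = [g_r, g_u]$ lies in $[G,G]$ by construction, hence $\rho_\alpha(c) = 1$. Therefore $\rho_\alpha(c)$ acts as the identity on the one-dimensional space $E_\alpha$, which gives
\begin{equation*}
\dim \mathrm{Fix}_\alpha(c) = 1, \qquad \mathrm{codim}\, \mathrm{Fix}_\alpha(c) = 0.
\end{equation*}
Applying Corollary \ref{c.3-5} then yields $m_\alpha = 1$ and $\ell_\alpha = 0$, as claimed.

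There is essentially no obstacle here: the statement is a one-line consequence of the preceding corollary. The only conceptual content is the standard observation that any one-dimensional complex representation is trivial on the derived subgroup, and therefore trivial on the commutator $c$. Note that the same argument would in fact work more generally for any character $\chi_\alpha$ whose value on $c$ is $\dim \chi_\alpha$ (equivalently, whose restriction to $\langle c \rangle$ is trivial), but the one-dimensional case is the only situation where this triviality is automatic.
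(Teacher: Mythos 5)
Your proof is correct and follows exactly the paper's own argument: the paper's proof is the one-line observation that $c$ is a commutator and hence lies in the kernel of any homomorphism $G \to \Cset^*$, combined implicitly with Corollary \ref{c.3-5}. You have simply spelled out the same reasoning in more detail.
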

\begin{proof}
Indeed, $c$ is a commutator, and therefore belongs to the kernel of any homomorphism from $G$ into $\Cset^*$.
\end{proof}

Conversely,

\begin{corollary}\label{c.3-7}
When $\dim \chi_{\alpha}>1$, one has  $\ell_{\alpha}>1$.
\end{corollary}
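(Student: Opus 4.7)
The plan is to exploit the formula of Corollary \ref{c.3-5}, which in the regular case gives $\ell_{\alpha}=\operatorname{codim} Fix_{\alpha}(c)$, and rule out the two possibilities $\ell_{\alpha}=0$ and $\ell_{\alpha}=1$ separately, using the facts that $c$ is a commutator of the two generators of $G$ and has finite order.

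First I would handle the case $\ell_{\alpha}=0$. This means $\rho_{\alpha}(c)$ acts as the identity on the whole space, i.e.\ $c\in\ker\rho_{\alpha}$. Because $G=\langle g_r,g_u\rangle$ is generated by two elements, the commutator subgroup $[G,G]$ is the normal closure of the single element $c=[g_r,g_u]$: the quotient of $G$ by that normal closure has commuting generators, hence is abelian, and conversely any normal subgroup whose quotient is abelian must contain $c$. Therefore $\ker\rho_{\alpha}\supset [G,G]$, so $\rho_{\alpha}$ factors through the abelian group $G/[G,G]$, forcing $\dim\chi_{\alpha}=1$, contrary to our hypothesis.

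Next I would exclude $\ell_{\alpha}=1$. Since $c$ is an element of a finite group, $\rho_{\alpha}(c)$ has finite order and is therefore diagonalizable. A codimension-$1$ fixed subspace means that, in a suitable basis, $\rho_{\alpha}(c)$ is diagonal with eigenvalues $(1,\dots,1,\lambda)$ and $\lambda\neq 1$; hence $\det\rho_{\alpha}(c)=\lambda\neq 1$. On the other hand $c=[g_r,g_u]$ is a commutator in $G$, and $\det\circ\rho_{\alpha}:G\to\Cset^{*}$ is a one-dimensional (hence abelian) character, so it must kill every commutator, giving $\det\rho_{\alpha}(c)=1$. This contradiction rules out $\ell_{\alpha}=1$.

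Combining the two exclusions with $\ell_{\alpha}=\operatorname{codim} Fix_{\alpha}(c)\geq 0$ yields $\ell_{\alpha}\geq 2$, which is the desired conclusion. There is no real obstacle here: the only mildly non-routine point is noticing that the determinant-of-a-commutator identity upgrades the previous ``$\ell_\alpha\neq 1$'' statement (Theorem stated in the introduction) from a general origami fact to the sharp lower bound $\ell_\alpha\geq 2$ in the regular, $\dim\chi_\alpha>1$ case, via the semisimplicity of $\rho_\alpha(c)$.
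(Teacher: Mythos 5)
Your proof is correct and follows essentially the same route as the paper's: the case $\ell_{\alpha}=0$ is excluded because killing $c$ forces $\rho_{\alpha}$ to factor through an abelian quotient, and the case $\ell_{\alpha}=1$ is excluded by combining $\det\rho_{\alpha}(c)=1$ (since $c$ is a commutator) with the semisimplicity of the finite-order element $\rho_{\alpha}(c)$. The only cosmetic difference is the order in which you invoke diagonalizability and the determinant identity in the second step; the substance is identical.
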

Thus the multiplicity $\ell_{\alpha}$ in the regular case is never equal to $1$. We will see below that this is actually true for all origamis, regular or not!

\begin{proof}
First we show that $\ell_{\alpha}>0$. If we had $\ell_{\alpha}=0$, this would mean that $c$ belongs to the kernel of the irreducible representation $\pi_{\alpha}$ of $G$ with character $\chi_{\alpha}$ of $\textrm{dim}\chi_{\alpha}>1$. But $G$ is generated by $g_r,g_u$ which have $c$ as commutator, hence $\pi_{\alpha}$ would factor through an abelian group. As $\pi_{\alpha}$ is irreducible, this is only possible when $\dim \chi_{\alpha}=1$.

\smallskip
Assume now by contradiction that $\ell_{\alpha}=1$. Thus $\pi_{\alpha}(c)$ fixes a hyperplane in the space $E_{\alpha}$
 of $\pi_{\alpha}$. But, as $c$ is a commutator, one has $\det \pi_{\alpha}(c) =1$. Thus $1$ is the only eigenvalue of $\pi_{\alpha}(c)$. As $\pi_{\alpha}(c)$ is of finite order, this can only happen when $\pi_{\alpha}(c)=1$, a case which was excluded above.
\end{proof}

\subsection{Quasiregular origamis}

As a special case of Corollary \ref{c.3-6}, we see that the trivial representation is never a factor of $H_1^{(0)} (M,\Cset)$ when $M$ is a regular origami.

\begin{definition}
An origami $\pi: M \rightarrow \Tset^2$ is {\it quasiregular} if the trivial representation is not a factor of $H_1^{(0)} (M,\Cset)$, i.e., $\ell_0=0$.
\end{definition}

\begin{proposition}\label{p.3-9}
The origami $M$ associated to the data $G,H,g_r,g_u$ is quasiregular iff the commutator $c$ is contained in the intersection of the conjugates of $N$, or, equivalently, the normal subgroup generated by $c$ is contained in $N$.
\end{proposition}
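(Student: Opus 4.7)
The plan is to apply Theorem \ref{t.3-3} directly to the trivial character $\chi_0$ and observe that the formula collapses into a simple combinatorial expression counting the cycles of $\rho$ on $G/N$. Concretely, I would first note that $\dim\chi_0 = 1$ and that $\psi_0$ is the permutation representation of $G$ on the coset space $G/N$, so $\dim\psi_0 = r = [G:N]$. For each cycle $C$ of $\rho$ with length $n_C$, the element $c^{n_C}$ satisfies $c^{n_C}g_i^{-1} \in g_i^{-1}N$ for $i \in C$; so $\psi_0(c^{n_C})$ acts trivially on the one-dimensional subspace $g_i^{-1}E_0$. Hence $f_0(C) = 1$ for every cycle, and Theorem \ref{t.3-3} gives
\begin{equation*}
\ell_0 \;=\; r \;-\; \#\{\text{cycles of } \rho\} \;=\; \sum_{C \text{ cycle of }\rho} (n_C - 1).
\end{equation*}

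The second step is to translate the vanishing of this quantity into a group-theoretic condition. Since each term $n_C - 1$ is non-negative, $\ell_0 = 0$ is equivalent to $n_C = 1$ for every cycle $C$, i.e. to $\rho$ being the identity permutation of $G/N$. By the defining relation $cg_i^{-1} \in g_{\rho(i)}^{-1}N$, this holds iff $g_icg_i^{-1} \in N$ for every coset representative $g_i$, iff $gcg^{-1} \in N$ for every $g \in G$, iff $c$ lies in the intersection $\bigcap_{g \in G} gNg^{-1}$ of all $G$-conjugates of $N$.

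Finally, I would check the equivalence of the two formulations in the statement. The intersection $\bigcap_{g \in G} gNg^{-1}$ is a normal subgroup of $G$ contained in $N$ (the \emph{normal core} of $N$), and in fact it is the largest such subgroup. Therefore $c$ belongs to this intersection iff the smallest normal subgroup of $G$ containing $c$ is contained in $N$; this finishes the proof.

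I do not expect any real obstacle: the only conceptual point is to exploit $\dim\chi_0 = 1$ to force $f_0(C) = 1$ rather than an inequality, which is what turns the general bound of Corollary \ref{c.3-4} into an equality for the trivial character.
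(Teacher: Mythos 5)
Your proof is correct and follows essentially the same route as the paper: the paper also derives $\ell_0=\sum_C (n_C-1)$ from Theorem \ref{t.3-3} (this computation appears explicitly in the proof of Corollary \ref{c.3-4}) and concludes that $\ell_0=0$ iff $gcg^{-1}\in N$ for all $g\in G$. Your added observations — that $f_0(C)=1$ because $\dim\chi_0=1$, and that the intersection of the conjugates of $N$ is its normal core — merely fill in details the paper leaves implicit.
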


\begin{proof}
Indeed, from Theorem \ref{t.3-3}, we see that $\ell_0 =0$ iff $n(g) =1$ , i.e., $gcg^{-1} \in N$, for all $g \in G$.
\end{proof}

\begin{corollary}\label{c.3-10}
Assume that the origami $M$ is not quasiregular. Then all irreducible representations (over $\Cset$) of ${\rm Aut}(M)$ have multiplicities $>1$ in $H_1^{(0)} (M,\Cset)$.
\end{corollary}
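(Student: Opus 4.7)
The plan is to reduce the statement to the single inequality $\ell_0 \geq 2$ under the not-quasi\-regular hypothesis, and then invoke Corollary \ref{c.3-4}, which guarantees $\ell_{\alpha} \geq \ell_0$ for every irreducible character $\chi_{\alpha}$ of $N/H$. Specializing Theorem \ref{t.3-3} to the trivial character (so that $\dim \chi_0 = 1$ and $f_0(C) = 1$ for every cycle $C$ of $\rho$), I obtain
$$\ell_0 \;=\; \sum_{C} (n_C - 1) \;=\; r - \#\{\text{cycles of }\rho\},$$
where the sum runs over the cycles of the permutation $\rho$ of $N \backslash G$ induced by the action of $c$. By Proposition \ref{p.3-9}, the hypothesis that $M$ is not quasi-regular is equivalent to the existence of some $g \in G$ with $g c g^{-1} \notin N$, which is exactly the statement that $\rho \neq \mathrm{id}$.

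The essential step is a parity argument on $\rho$. The assignment $g \mapsto (Nx \mapsto Nx g^{-1})$ defines a group homomorphism $G \to \mathrm{Sym}(N \backslash G)$, and composition with the sign character yields a homomorphism $G \to \{\pm 1\}$ which necessarily factors through the abelianization of $G$. Hence $\rho$, being the image of the commutator $c = [g_r, g_u] \in [G,G]$, is an even permutation. In particular $\rho$ cannot be a single transposition, so either it has a cycle of length at least $3$ (which alone contributes $n_C - 1 \geq 2$ to the sum), or it has at least two disjoint cycles of length $\geq 2$ (each contributing at least $1$). In both cases $\ell_0 \geq 2$, and the result follows.

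The main obstacle is the gap between the easy bound $\ell_0 \geq 1$ (immediate from the non-quasi-regularity hypothesis via Proposition \ref{p.3-9}) and the target $\ell_0 \geq 2$: one must rule out the possibility that $\rho$ is a single transposition of two cosets. This is precisely where the commutator nature of $c$ re-enters the picture, just as it did in Corollary \ref{c.3-7} for the regular case — only now the constraint is read off at the level of permutation parity rather than of determinants.
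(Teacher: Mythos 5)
Your proposal is correct and follows essentially the same route as the paper: reduce to $\ell_0 \geq 2$ via Corollary \ref{c.3-4}, compute $\ell_0 = r - \#\{\text{cycles of }\rho\}$ from Theorem \ref{t.3-3}, note $\rho \neq \mathrm{id}$ by non-quasiregularity, and use that $c$ is a commutator to conclude $\rho$ is even and hence not a transposition. The only difference is that you spell out the parity argument (the sign character factoring through the abelianization) in slightly more detail than the paper does.
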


\begin{proof}
By Corollary \ref{c.3-4}, it is sufficient to show that the multiplicity $\ell_0$ of the trivial representation is $>1$. Let $\rho$ be the permutation  of $G/N$ associated to the action of $c$. From Theorem \ref{t.3-3}, $\ell_0$ is equal to the number of elements of $G/N$ minus the number of cycles of $\rho$.   Since $c$ is a commutator, $\rho$ is an even permutation, and, in particular, $\rho$ is not a transposition.  Also, as $M$ is not quasiregular, $\rho$ is not the identity. This shows that $\ell_0$ is at least equal to $2$.
\end{proof}

\begin{example}
Let $p$ be a prime number and let $G$ be the Heisenberg group over the field $\Fset_p$: the elements of $G$ are the matrices
$$ M(a,b,c)= \left ( \begin{array}{ccc} 1 & a & c \\ 0 & 1 & b \\ 0 & 0 & 1 \end{array} \right ),$$
with $a,b,c \in \Fset_p$. We take $g_r = M(1,0,0)$, $g_u = M(0,1,0)$, $H= <g_u>$. Then the normalizer $N$ is formed of matrices $M(a,b,c)$ with $a=0$ and $N/H $ is isomorphic to $\Fset_p$. As $N$ is a normal subgroup containing  the commutator of $g_r, g_u$, the origami $M$ defined by these data is quasiregular but not regular.
\end{example}

In Subsection \ref{ss.qro} we will give  more sophisticated examples where the automorphism group is not abelian.

\begin{proposition}\label{p.qr}
The origami $M$ associated to the data $G,H,g_r,g_u$ is quasiregular iff the normalizer $N$ is a normal subgroup of $G$ and the quotient group $G/N$ is abelian.
\end{proposition}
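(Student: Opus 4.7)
The plan is to combine Proposition \ref{p.3-9} with a straightforward analysis of the normal closure $K=\langle\langle c\rangle\rangle$ of the commutator $c=[g_r,g_u]$ in $G$. By Proposition \ref{p.3-9}, quasiregularity is equivalent to the inclusion $K\subseteq N$, so both implications will be reformulated as statements about $K$.

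For the easy direction ($\Leftarrow$), assume that $N$ is normal in $G$ and that $G/N$ is abelian. Then in $G/N$ the images $\bar g_r,\bar g_u$ commute, so $c=[g_r,g_u]$ lies in $N$. Since $N$ is normal, every conjugate $gcg^{-1}$ also lies in $N$, hence $K\subseteq N$, and Proposition \ref{p.3-9} gives quasiregularity.

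For the reverse direction ($\Rightarrow$), the key observation is that $G/K$ is abelian. Indeed, $G$ is generated by the two elements $g_r,g_u$, and in the quotient $G/K$ their images commute because their commutator lies in $K$ by construction; a group generated by two commuting elements is abelian. Assuming $M$ is quasiregular, Proposition \ref{p.3-9} yields $K\subseteq N$, so $N/K$ makes sense as a subgroup of the abelian group $G/K$. Every subgroup of an abelian group is normal, so $N/K$ is normal in $G/K$, which lifts to $N$ being normal in $G$. Finally $G/N\simeq (G/K)/(N/K)$ is a quotient of an abelian group, hence abelian.

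There is no serious obstacle here: once one extracts from Proposition \ref{p.3-9} that the relevant invariant is the normal closure of $c$, the two implications fall out from the single fact that any group generated by two elements whose commutator is trivial is abelian, applied to the quotient $G/K$. The only thing to check carefully is that one really uses the hypothesis that $g_r,g_u$ \emph{generate} $G$ (so that $G/K$ is generated by two commuting elements), which is part of the standing assumption on the data $(G,H,g_r,g_u)$.
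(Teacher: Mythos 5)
Your proof is correct and follows essentially the same route as the paper: the paper's converse direction works with the normal core $N_0=\bigcap_g gNg^{-1}$ of $N$ where you use the normal closure $K$ of $c$, but both arguments rest on the identical observations that Proposition \ref{p.3-9} places a normal subgroup of $G$ inside $N$ and that the corresponding quotient of $G$ is abelian because it is generated by the commuting images of $g_r,g_u$, whence $N$ is normal by the correspondence theorem.
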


\begin{remark}\label{r.qro-ab-nc}
When $M$ is quasiregular, the quotient $G/N$ is generated by the images of $g_r$, $g_u$, therefore it is either cyclic or the product of two cyclic groups. In Subsection \ref{ss.qro-ab-nc}, we give an example where $G/N$ is not cyclic.
\end{remark}
\begin{proof}
If $N$ is normal with $G/N$ abelian, then the commutator $c$ belongs to $N$ and $M$ is quasiregular according to Proposition \ref{p.3-9}. Conversely, assume that $M$ is quasiregular. Let $N_0$ be the intersection of the conjugates of $N$. Then $N_0$ is a normal subgroup of $G$ and $G/N_0$, which is generated by the images of $g_r$, $g_u$, is abelian since the commutator $c$ belongs to $N_0$ (by Proposition \ref{p.3-9}). But then $N/N_0$ is a normal subgroup of $G/N_0$, hence $N$ is normal in $G$. As the images of $g_r$, $g_u$  in $G/N$  span $G/N$ and commute, $G/N$ is abelian.
\end{proof}

\begin{proposition}\label{p.3-14}
Let $M$ be a quasiregular origami. The multiplicity in $H_1^{(0)} (M,\Cset)$ of any irreducible representation (over $\Cset$) of ${\rm Aut}(M)$ is never equal to $1$.
\end{proposition}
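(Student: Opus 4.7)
The plan is to extend the argument of Corollary \ref{c.3-7} from the regular case to the quasiregular setting by means of the induced representation $\psi_\alpha = {\rm Ind}_N^G\, \chi_\alpha$. First, I would invoke Proposition \ref{p.qr}: since $M$ is quasiregular, $N$ is normal in $G$ and the quotient $G/N$ is abelian, so the commutator $c = [g_r, g_u]$ lies in $N$. Therefore the permutation $\rho$ of $G/N$ given by left multiplication by $c$ is the identity, every cycle of $\rho$ is a singleton with $n_C = 1$, and Theorem \ref{t.3-3} simplifies to
$$ \ell_\alpha = \sum_{i=1}^r (\dim \chi_\alpha - \dim Fix_\alpha(g_i c g_i^{-1})) = \sum_{i=1}^r {\rm codim}_{E_\alpha} Fix_\alpha(g_i c g_i^{-1}),$$
a sum of non-negative integers, where $g_1,\ldots,g_r$ are representatives of the cosets of $N$ in $G$ and each $g_i c g_i^{-1}$ lies in $N$.

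Next, I would assume for contradiction that $\ell_\alpha = 1$. Then exactly one index $i_0$ contributes a $1$ to the sum while all other indices contribute $0$. Concretely, $\chi_\alpha(g_i c g_i^{-1})$ is the identity for every $i \neq i_0$, whereas $\chi_\alpha(g_{i_0} c g_{i_0}^{-1})$ has a fixed subspace of codimension exactly $1$ in $E_\alpha$. Being of finite order, this last endomorphism is diagonalizable, with $\dim\chi_\alpha - 1$ eigenvalues equal to $1$ and one remaining eigenvalue $\lambda \neq 1$, whence $\det \chi_\alpha(g_{i_0} c g_{i_0}^{-1}) = \lambda \neq 1$.

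The final step is to compute $\det \psi_\alpha(c)$ and derive a contradiction. Since $\rho$ is the identity, $\psi_\alpha(c)$ preserves each summand $g_i^{-1} E_\alpha$ of the induced space $F_\alpha$ and acts there as a conjugate of $\chi_\alpha(g_i c g_i^{-1})$. Consequently
$$ \det \psi_\alpha(c) = \prod_{i=1}^r \det \chi_\alpha(g_i c g_i^{-1}) = \lambda \neq 1.$$
On the other hand $\det \psi_\alpha : G \to \Cset^*$ is a one-dimensional character of $G$, hence is trivial on the commutator subgroup of $G$, which contains $c$; this forces $\det \psi_\alpha(c) = 1$. The contradiction shows $\ell_\alpha \neq 1$. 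I do not anticipate a serious obstacle here: the key observation, exactly parallel to Corollary \ref{c.3-7}, is that although $c$ need not be a commutator of elements of $N$, it is always a commutator in $G$, which is all that is needed to kill the determinant of the induced representation.
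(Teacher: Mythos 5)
Your argument is correct, and it reaches the conclusion by a route that is genuinely different in execution from the paper's, even though both ultimately rest on a determinant obstruction coming from the fact that $c$ is a commutator. The paper stays inside $N$: it writes the element $c_{k_r,k_u}=[g_r^{k_r},g_u^{k_u}]$, which is a commutator of two elements of $N$, as an explicit product of the conjugates $g_r^i g_u^j c g_u^{-j} g_r^{-i}$, applies $\pi_\alpha$ and takes determinants to conclude that the product of the determinants $\det\pi_\alpha(g c g^{-1})$ over coset representatives is $1$, and then runs a short case analysis. You instead pass to the induced representation $\psi_\alpha={\rm Ind}_N^G\chi_\alpha$ and observe that $\det\psi_\alpha$ is a one-dimensional character of $G$, hence kills $c=[g_r,g_u]$; since $\rho$ is the identity in the quasiregular case, $\det\psi_\alpha(c)$ is exactly the same product $\prod_i\det\chi_\alpha(g_i c g_i^{-1})$, and the contradiction with $\ell_\alpha=1$ follows at once. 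What your version buys is economy and uniformity: it bypasses the combinatorial identity for $c_{i,j}$ entirely, it is a verbatim extension of the determinant argument of Corollary \ref{c.3-7} (with $\psi_\alpha$ in place of $\pi_\alpha$), and it runs over an honest set of coset representatives of $N$ in $G$ rather than over all pairs $(i,j)$ with $0\leq i<k_r$, $0\leq j<k_u$ (which may repeat cosets when $G/N$ is a proper quotient of $\Zset/k_r\times\Zset/k_u$). What the paper's version buys is that it exhibits concretely, inside $N$ itself, the commutator relation responsible for the cancellation, which is in the same spirit as the rest of Section 3. Both proofs are complete; yours is, if anything, slightly cleaner.
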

\begin{proof}
Let $k_r$, $k_u$ be the respective orders of the images of $g_r$, $g_u$ in the abelian group $G/N$. Denote by $n_r, n_u$ the elements $g_r^{k_r}$, $g_u^{k_u}$ of $N$. For $0 < i \leq k_r$, $0 < j \leq k_u$, define $c_{i,j}:= g_r^{i} g_u^j g_r^{-i} g_u^{-j}$. One has $c_{1,1} = c$ and $c_{k_r,k_u} = n_r n_u n_r^{-1} n_u^{-1}$. For any $0 < i \leq k_r$, $0 < j \leq k_u$, we have
\begin{eqnarray*}
 c_{1,j} &=&  c_{1,1} (g_u c_{1,1} g_u^{-1}) \ldots (g_u^{j-1} c_{1,1} g_u^{1-j} ) \\
 c_{i,j} &=&  (g_r^{i-1} c_{1,j} g_r^{1-i} ) \ldots (g_r c_{1,j} g_r^{-1}) c_{1,j}
 \end{eqnarray*}
 Thus $c_{k_r,k_u}$ is the product (in the appropriate order) of the conjugates $g_r^i g_u^j c g_u^{-j}g_r^{-i}$ for $0 \leq i < k_r, \, 0 \leq j < k_u$. By Theorem \ref{t.3-3}, the multiplicity $\ell_{\alpha}$ in $H_1^{(0)} (M,\Cset)$
 of an irreducible representation $\pi_{\alpha}$ of ${\rm Aut}(M)$   is the sum of the codimensions of the fixed spaces of the $g_r^i g_u^j c g_u^{-j}g_r^{-i}$.

\smallskip

We can now argue as in the regular case. The determinant of $\pi_{\alpha} (c_{k_r,k_u})$ is equal to $1$ since $c_{k_r,k_u}$ is a commutator in $N$. Then either all determinants of the  $\pi_{\alpha} (g_r^i g_u^j c g_u^{-j}g_r^{-i})$ are equal to $1$  or at least
 two are distinct from $1$. In the second case, at least two of the codimensions of the fixed spaces are $>0$ and $\ell_{\alpha} >1$. In the first case, if some  $\pi_{\alpha} (g_r^i g_u^j c g_u^{-j}g_r^{-i})$ is distinct from the identity, the
 corresponding fixed space has codimension at least $2$. Otherwise $\ell_{\alpha}=0$.
\end{proof}

As a direct consequence of Corollary \ref{c.3-10} and Proposition \ref{p.3-14}, we get the following statement (announced right after Corollary \ref{c.3-7})

\begin{corollary} For any origami $M$ (quasiregular or not), the multiplicity $\ell_{\alpha}$ is never equal to $1$.
\end{corollary}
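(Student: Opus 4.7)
The statement is a direct synthesis of the two preceding results, so my plan is to dispatch it by a clean dichotomy on whether the origami is quasiregular.

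First, I would recall the definition: $M$ is quasiregular precisely when $\ell_0 = 0$, where $\ell_0$ is the multiplicity of the trivial representation in $H_1^{(0)}(M,\Cset)$. This gives the natural case split.

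\textbf{Case 1: $M$ is not quasiregular.} In this case I would simply invoke Corollary \ref{c.3-10}, which already asserts the stronger statement that every irreducible representation of $\textrm{Aut}(M)$ has multiplicity strictly greater than $1$ in $H_1^{(0)}(M,\Cset)$. In particular, no $\ell_{\alpha}$ equals $1$. The mechanism behind Corollary \ref{c.3-10}, which I would briefly recall for context, is that $\ell_0$ counts $\#(G/N)$ minus the number of cycles of the permutation $\rho$ induced by $c$ on $G/N$; since $c$ is a commutator, $\rho$ is even, so it is neither a single transposition nor, in the non-quasiregular case, the identity, forcing $\ell_0 \geq 2$, and then Corollary \ref{c.3-4} propagates this lower bound to all $\ell_{\alpha}$.

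\textbf{Case 2: $M$ is quasiregular.} Here I would apply Proposition \ref{p.3-14} directly. That proposition already treats exactly this case and concludes that no irreducible representation appears with multiplicity $1$ in $H_1^{(0)}(M,\Cset)$.

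Putting the two cases together exhausts all origamis and yields $\ell_{\alpha} \neq 1$ in every situation. The only genuine content of the corollary is the observation that the two earlier results cover complementary cases, so there is no real obstacle: the proof is essentially one line invoking Corollary \ref{c.3-10} and Proposition \ref{p.3-14} together. The hard work has already been done in establishing those two statements — in particular the delicate determinant/codimension argument inside Proposition \ref{p.3-14}, which handles the quasiregular case by expressing $c_{k_r,k_u}$ as a product of conjugates of $c$ and then arguing that either at least two such conjugates have nontrivial determinant under $\pi_{\alpha}$, or all are trivial and a single nontrivial one must fix a subspace of codimension at least $2$.
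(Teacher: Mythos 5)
Your proof is correct and is exactly the paper's argument: the corollary is stated there as a direct consequence of Corollary \ref{c.3-10} (the non-quasiregular case) and Proposition \ref{p.3-14} (the quasiregular case), which together cover all origamis. Nothing further is needed.
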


\subsection{Decomposition of $H_1^{(0)}(M, \Kset)$ for other subfields of $\Cset$}

One has $H_1^{(0)}(M, \Cset) = H_1^{(0)}(M, \Kset) \otimes \Cset$. The decompositions of  $H_1^{(0)}(M, \Kset)$ and $H_1^{(0)}(M, \Cset)$ into isotypic components are thus related by the following standard facts of the theory of representations
(or semisimple algebras) that we recall for the convenience of the reader (see \cite{S} and references therein for more details). We denote by ${\mathcal Irr}_{\Kset}(\Gamma)$ the set of isomorphism classes of irreducible representations over $\Kset$ of a finite
 group $\Gamma$, by $\chi_{\alpha}$ the character of the representation $\alpha \in {\mathcal Irr}_{\Kset}(\Gamma)$. We denote by $\overline \Kset $ the algebraic closure of $\Kset$ in $\Cset$.

\begin{itemize}
\item A theorem of Brauer (c.f. Theorem 24 in \cite{S}) asserts that ${\mathcal Irr}_{\Kset}(\Gamma) = {\mathcal Irr}_{\Cset}(\Gamma)$ as soon as $\Kset$ contains the $m^{th}$ roots of unity for any $m$ which is the order of some element of $\Gamma$. In particular we always have ${\mathcal Irr}_{\overline \Kset}(\Gamma) = {\mathcal Irr}_{\Cset}(\Gamma)$.
\item The Galois group ${\rm Gal}(\overline\Kset/\Kset)$ acts in a natural way on ${\mathcal Irr}_{\overline \Kset}(\Gamma)$. The elements of ${\mathcal Irr}_{\Kset}(\Gamma)$ are in canonical one-to-one correspondence with the
orbits of this action.  If $a$ is such an orbit, one has
$$\chi_a = m_a \sum_{\alpha \in a} \chi_{\alpha} ,$$
where the integer $m_a \geq 1$ is the {\it Schur index} of $a$.

\smallskip
For the multiplicities in the decompositions of $ H_1^{(0)}(M, \Kset)$, $H_1^{(0)}(M, \Cset)$  into isotypic components (with $\Gamma = {\rm Aut}(M)$), it means that the multiplicities $\ell_{\alpha}, \alpha \in a$ (in the decomposition of $ H_1^{(0)}(M, \Cset)$) are equal and related to the multiplicity $\ell_a$  (in the decomposition of $ H_1^{(0)}(M, \Kset)$) through $\ell_{\alpha} = m_a \ell_a$.

\item Let $V_a$ be an irreducible $\Kset(\Gamma)$-module, associated to an orbit $a$ as above. The commuting algebra of $\Kset(\Gamma)$ in ${\rm End}_{\Kset}(V_a)$ is a skew-field $D_a$. The degree of $D_a$ over its center  is the square $m_a^2$ of the Schur index.

\smallskip
For $\Gamma = {\rm Aut}(M)$, by writing $W_a$ for the isotypic component of type $a$ in $H_1^{(0)}(M, \Kset)$ and $\ell_a$ for the corresponding multiplicity, it means that the commuting algebra of $\Kset(\Gamma)$ in
${\rm End}_{\Kset}(W_a)$ is isomorphic (through the choice of an isomorphism  of $\Kset(\Gamma)$-modules $W_a \rightarrow V_a^{\ell_a}$) to the matrix algebra
$M({\ell_a}, D_a)$ over the skew-field $D_a$.
\end{itemize}

\subsection {The case $\Kset = \Rset$}

When $\Kset = \Rset$, the discussion of the last subsection takes a particularly simple form. There are three types of irreducible representations:
\begin{itemize}
\item {\it Real} representations correspond to the case of an orbit $a$ having a single element and $m_a =1$.  One has $D_a \simeq \Rset$.
\item {\it Complex} representations correspond to the case of an orbit $a$ having two distinct elements which are deduced from each other by complex conjugation. One has $D_a \simeq \Cset$ and $m_a = 1$.
\item {\it Quaternionic} representations correspond to the case of an orbit $a$ having a single element and $m_a =2$. One has $D_a \simeq \Hset$, the field of quaternions.
\end{itemize}

Let
$$  H_1^{(0)}(M, \Rset) = \oplus_{a \in {\mathcal Irr}_{\Rset} ({\rm Aut}(M))} W_a$$
be the decomposition into isotypic components. The canonical symplectic form $\{.,.\}$ on $ H_1^{(0)}(M, \Rset)$ is invariant under ${\rm Aut}(M)$, hence it induces an isomorphism between the ${\rm Aut(M)}$-module
 $ H_1^{(0)}(M, \Rset)$ and its dual. This isomorphism leaves invariant each isotypic component $W_a$, which means that these components are orthogonal to each other w.r.t. the symplectic form $\{.,.\}$ and the restriction
of the symplectic form $\{.,.\}|_{W_a}$ to each $W_a$ is non-degenerate, i.e., $\{.,.\}|_{W_a}$ is a symplectic form on $W_a$.

\smallskip

Denote by $Sp(W_a)$ the group of automorphisms of the ${\rm Aut}(M)$-module $W_a$ which preserve the symplectic form. We discuss separately the three types of representations (using a set of notes \cite{C} by Y. Cornulier as source of inspiration) and denote by $V_a$ an irreducible representation of type $a$.

\begin{itemize}
\item Assume first that $a$ is real. Choose a ${\rm Aut}(M)$-invariant scalar product $\langle.,.\rangle$ on $V_a$  (unique up to a scalar).
\begin{proposition}
The multiplicity $\ell_a$ is even. One can choose an isomorphism of ${\rm Aut}(M)$-modules
$\iota: V_a^{\ell_a} \rightarrow  W_a$ such that the symplectic form on $W_a$ is given (with $\ell_a = 2 \ell'_a$) by
$$ \langle \iota(v_1,\ldots, v_{\ell_a}),\iota(v'_1,\ldots, v'_{\ell_a})\rangle = \sum_{j=1}^{\ell'_a}
(\langle v_j ,v'_{j+ \ell'_a}\rangle- \langle v'_j,v_{j+ \ell'_a}\rangle).$$
Any  $A \in Sp(W_a)$ can be written as
$\iota^{-1} \circ A \circ \iota ( v_1,\ldots, v_{\ell_a}) =  (v'_1,\ldots, v'_{\ell_a})$ with
$v'_i = \sum_j a_{i,j}v_j$. The map $A \mapsto (a_{i,j})$ is an isomorphism from $Sp(W_a)$ onto the symplectic group $Sp(\ell_a, \Rset)$.
\end{proposition}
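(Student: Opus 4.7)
The plan is to deduce everything from Schur's lemma applied to $V_a$ together with elementary linear algebra over the reals. Since $a$ is real one has $D_a\simeq\Rset$, so the space of $\textrm{Aut}(M)$-invariant bilinear forms on $V_a$ (equivalently, of $\textrm{Aut}(M)$-homomorphisms $V_a\to V_a^*$) is one-dimensional. It is already spanned by the symmetric scalar product $\langle\cdot,\cdot\rangle$, so every invariant bilinear form on $V_a$ is symmetric; in particular there is no non-zero $\textrm{Aut}(M)$-invariant antisymmetric form on $V_a$.

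Next I would fix an auxiliary isomorphism $\iota_0\colon V_a^{\ell_a}\to W_a$ of $\textrm{Aut}(M)$-modules and pull back $\{\cdot,\cdot\}|_{W_a}$ to an invariant form $B$ on $V_a^{\ell_a}$. Decomposing $B$ over pairs of summands and using the previous step, $B$ can be written uniquely as
$$B\bigl((v_i),(v'_j)\bigr)=\sum_{i,j=1}^{\ell_a}b_{ij}\,\langle v_i,v'_j\rangle.$$
The map $B\mapsto(b_{ij})$ is then a linear bijection between $\textrm{Aut}(M)$-invariant bilinear forms on $V_a^{\ell_a}$ and $M(\ell_a,\Rset)$; under it, symmetric (resp.\ antisymmetric, resp.\ non-degenerate) forms go to symmetric (resp.\ antisymmetric, resp.\ invertible) matrices. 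Since $\{\cdot,\cdot\}|_{W_a}$ is non-degenerate and antisymmetric, the matrix $(b_{ij})$ is antisymmetric and invertible over $\Rset$, which forces $\ell_a$ to be even.

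Writing $\ell_a=2\ell'_a$, a standard Darboux-type reduction of non-degenerate antisymmetric real forms puts $(b_{ij})$ in the canonical block form $J=\bigl(\begin{smallmatrix}0&I_{\ell'_a}\\ -I_{\ell'_a}&0\end{smallmatrix}\bigr)$ after a linear change of basis on $\Rset^{\ell_a}$. Because $\textrm{End}_{\textrm{Aut}(M)}(V_a^{\ell_a})=M(\ell_a,\Rset)$ acts on $V_a^{\ell_a}$ by mixing the $\ell_a$ factors, this change of basis is realised by an $\textrm{Aut}(M)$-equivariant automorphism of $V_a^{\ell_a}$; composing $\iota_0$ with it produces an isomorphism $\iota$ in which the symplectic form reads exactly as in the displayed formula. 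For the final assertion, an $\textrm{Aut}(M)$-automorphism $A$ of $W_a$ corresponds via $\iota$ to a matrix $(a_{i,j})\in M(\ell_a,\Rset)$, and under the bijection above the condition that $A$ preserves $\{\cdot,\cdot\}|_{W_a}$ becomes the condition that $(a_{i,j})$ preserves $J$, i.e.\ $(a_{i,j})\in Sp(\ell_a,\Rset)$; the map $A\mapsto(a_{i,j})$ is manifestly a group homomorphism and is bijective.

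No step poses a real obstacle; the only point requiring some care is checking that the matrix parametrisation of invariant bilinear forms on $V_a^{\ell_a}$ is compatible with the matrix parametrisation of invariant endomorphisms, so that $Sp(W_a)\to Sp(\ell_a,\Rset)$ is a group isomorphism and not merely a bijection.
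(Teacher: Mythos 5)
Your proposal is correct and follows essentially the same route as the paper: pull back the symplectic form through an arbitrary $\textrm{Aut}(M)$-module isomorphism, use Schur's lemma (the fact that $D_a\simeq\Rset$ and the invariant form on $V_a$ is symmetric) to identify the pulled-back form with an antisymmetric invertible matrix in $M(\ell_a,\Rset)$, deduce that $\ell_a$ is even, and normalize to the canonical form by composing with an equivariant automorphism. Your write-up merely makes the matrix parametrisation of invariant bilinear forms more explicit than the paper's phrasing via the induced $\Rset(\textrm{Aut}(M))$-linear map to the dual, but the argument is the same.
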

\begin{proof}
Observe first that endomorphisms of the ${\rm Aut}(M)$-module $V_a^{\ell_a}$ are exactly the maps $A(v_1,\ldots, v_{\ell_a}) =  (v'_1,\ldots, v'_{\ell_a})$ such that $v'_i = \sum_j a_{i,j}v_j$ for some matrix $ (a_{i,j}) \in M(\ell_a,\Rset)$.
Choose {\it any} isomorphism $\iota': V_a^{\ell_a} \rightarrow  W_a$. Transferring the symplectic form to $V_a^{\ell_a}$ through $\iota'$, we get an isomorphism from $V_a^{\ell_a}$ to its dual, which is identified to $V_a^{\ell_a}$ by the scalar product on $V_a$ . As the symplectic form is non-degenerate and ${\rm Aut}(M)$-invariant, this isomorphism is $\Rset({\rm Aut}(M))$-linear and thus associated to a matrix in $M(\ell_a, \Rset)$, which is antisymmetric and invertible. This implies that $\ell_a$ is even. Composing $\iota'$ by an appropriate automorphism of the ${\rm Aut}(M)$-module $V_a^{\ell_a}$, we get $\iota:V_a^{\ell_a} \rightarrow  W_a$ such that the symplectic form is given by the required canonical expression. The last assertion is an immediate verification.
\end{proof}
\item  Assume now that $a$ is complex. Choose an isomorphism between $D_a$ and $\Cset$ ; this amounts to choose one of the two elements of $a$, call it $\alpha$. Equip $V_a$ with the structure of $\Cset$-vector space deduced from the action of $D_a$. The character of  $V_a$ as $\Cset({\rm Aut}(M))$-module is $\chi_{\alpha}$. Choose a ${\rm Aut}(M)$-invariant hermitian scalar product $\langle.,.\rangle$ on $V_a$  (unique up to a positive real scalar). The real and the imaginary part of this hermitian scalar product form a basis of the  ${\rm Aut}(M)$-invariant $\Rset$-bilinear forms on $V_a$.
\begin{proposition}
There exist integers $p,q$ with $p+q=\ell_a$ and an isomorphism of ${\rm Aut}(M)$-modules
$\iota: V_a^{\ell_a} \rightarrow  W_a$ such that the symplectic form on $W_a$ is given  by
$$ \{ \iota(v_1,\ldots, v_{\ell_a}),\iota(v'_1,\ldots, v'_{\ell_a})\} = \Im (\sum_{m=1}^{p}
\langle v_m ,v'_{m}\rangle -\sum_{m=p+1}^{p+q} \langle v_m ,v'_{m}\rangle ).$$
Any  $A \in Sp(W_a)$ can be written as
$\iota^{-1} \circ A \circ \iota ( v_1,\ldots, v_{\ell_a}) =  (v'_1,\ldots, v'_{\ell_a})$ with
$v'_m = \sum_n a_{m,n}v_n$, $ a_{m,n} \in \Cset$. The map $A \mapsto (a_{m,n})$ is an isomorphism from $Sp(W_a)$ onto the unitary group $U_{\Cset}(p,q)$ of the hermitian form $\sum_1^p |z_m|^2 -\sum_{p+1}^{p+q} |z_m|^2$.
\end{proposition}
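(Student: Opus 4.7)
The plan is to mimic the argument in the real case, with the wrinkle that the commuting algebra of $\Rset(\textrm{Aut}(M))$ in $\textrm{End}_{\Rset}(V_a)$ is now $D_a\simeq\Cset$ rather than $\Rset$. First, by Schur's lemma, every endomorphism of the $\textrm{Aut}(M)$-module $V_a^{\ell_a}$ is given by a matrix $(a_{m,n})\in M(\ell_a,\Cset)$ acting by $(v_1,\dots,v_{\ell_a})\mapsto(\sum_n a_{1,n}v_n,\dots,\sum_n a_{\ell_a,n}v_n)$, where the $\Cset$-structure on $V_a$ is the one induced by $D_a$; the automorphisms correspond to $GL(\ell_a,\Cset)$.

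Next, I would pick any isomorphism $\iota':V_a^{\ell_a}\to W_a$ of $\textrm{Aut}(M)$-modules and pull the symplectic form back to $V_a^{\ell_a}$. Using the fact, noted just before the proposition, that the invariant $\Rset$-bilinear forms on $V_a$ are spanned by $\Re\langle\cdot,\cdot\rangle$ and $\Im\langle\cdot,\cdot\rangle$, one sees that the invariant $\Rset$-bilinear forms on $V_a^{\ell_a}$ are precisely
\[
B_{H'}\bigl((v_m),(v'_n)\bigr)=\sum_{m,n}\Re\bigl(\mu_{m,n}\langle v_m,v'_n\rangle\bigr),
\]
parametrized by a unique matrix $H'=(\mu_{m,n})\in M(\ell_a,\Cset)$. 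Using $\overline{\langle v,v'\rangle}=\langle v',v\rangle$, the antisymmetry of $B_{H'}$ translates into $\mu_{m,n}=-\overline{\mu_{n,m}}$, i.e.\ $H'$ is anti-hermitian; writing $H'=iH$ with $H$ hermitian, non-degeneracy of the form becomes invertibility of $H$.

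The last step is the classical diagonalization of non-degenerate hermitian forms: there exist $P\in GL(\ell_a,\Cset)$ and integers $p,q\ge 0$ with $p+q=\ell_a$ such that $P^{\ast}HP=\textrm{diag}(\,1,\dots,1,-1,\dots,-1\,)$ with $p$ plus signs. Post-composing $\iota'$ with the $\textrm{Aut}(M)$-module automorphism associated to $P$ (together with a harmless overall sign adjustment) yields an $\iota$ for which $\{\cdot,\cdot\}$ takes the canonical form announced in the statement. Finally, plugging an arbitrary $A=(a_{m,n})\in GL(\ell_a,\Cset)$ into this canonical expression shows that preservation of $\{\cdot,\cdot\}$ is equivalent to preservation of the hermitian form $\sum_{1}^{p}|z_m|^2-\sum_{p+1}^{p+q}|z_m|^2$, whence $Sp(W_a)\simeq U_{\Cset}(p,q)$. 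The only delicate step is the translation between invariant antisymmetric $\Rset$-bilinear forms and anti-hermitian matrices; once that dictionary is in place, hermitian diagonalization and the final verification are routine, and the argument parallels the real case almost verbatim.
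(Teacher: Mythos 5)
Your proof is correct and follows essentially the same route as the paper: both arguments identify the invariant symplectic form on $V_a^{\ell_a}$ with the imaginary part of an invariant non-degenerate hermitian form (the paper builds $H_{\iota'}(v,v')=\{\iota'(iv),\iota'(v')\}+i\{\iota'(v),\iota'(v')\}$ directly, while you pass through the anti-hermitian coefficient matrix $H'=iH$), and then conclude by Sylvester diagonalization of the hermitian matrix and the standard verification that a $\Cset$-linear map preserves the symplectic form iff it preserves the hermitian form. The only difference is presentational, not mathematical.
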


\begin{proof}
Choose some isomorphism of ${\rm Aut}(M)$-modules $\iota': V_a^{\ell_a} \rightarrow  W_a$. Transfer  the symplectic form from $W_a$ to $ V_a^{\ell_a}$ through $\iota'$ and write it as
$$  \{ \iota'(v_1,\ldots, v_{\ell_a}),\iota'(v'_1,\ldots, v'_{\ell_a})\} = \sum_{m,n} b_{m,n}(v_m,v'_n),$$
for $\Rset$-bilinear forms $b_{m,n}$ on $V_a$. As $\{\;,\;\}$ is  ${\rm Aut}(M)$-invariant, the same is true of the $b_{m,n}$. All invariant bilinear forms $b$ on $V_a$ satisfy $b(iv,iv')=b(v,v')$, hence we have (writing $v=(v_1,\ldots,v_{\ell_a}), v'=(v'_1,\ldots,v'_{\ell_a})$)
$$  \{ \iota'(iv),\iota'(iv')\} =  \{ \iota'(v),\iota'(v')\}.$$
Define then
$$ H_{\iota'}(v,v') := \{\iota'(iv),\iota'(v')\} +i \{\iota'(v),\iota'(v')\}.$$
This is a non degenerate hermitian form on $V_a^{\ell_a}$ which is  ${\rm Aut}(M)$-invariant (because the symplectic form is ${\rm Aut}(M)$-invariant and ${\rm Aut}(M)$ acts $\Cset$-linearly). In particular, we can write
$$H_{\iota'}(v,v') = \sum\limits_{m,n}c_{m,n}\langle v_m,v_n'\rangle$$
where $c_{m,n}\in\mathbb{C}$ and $c_{n,m}=\overline{c_{m,n}}$.

It follows that we can compose $\iota'$ with an appropriate automorphism of the ${\rm Aut}(M)$-module $V_a^{\ell_a}$ to get $\iota:V_a^{\ell_a} \rightarrow  W_a$ such that
$$H_{\iota}(v,v'):= \{\iota(iv),\iota(v')\} +i \{\iota(v),\iota(v')\}=\sum_{m=1}^{p}
\langle v_m ,v'_{m}\rangle -\sum_{m=p+1}^{p+q} \langle v_m ,v'_{m}\rangle$$
for some integers $p,q\geq 0$ with $p+q=\ell_a$. This proves the first part of the statement of the proposition. Then, again, the last assertion is an immediate verification.
\end{proof}

\item Consider finally the case where $a$ is quaternionic. We fix an isomorphism between $D_a$ and $\Hset$. We equip $V_a$ with the structure of a right vector space over $\Hset$ by setting
$$ v z = \bar z v,  \quad  z \in \Hset, v \in V_a .$$
Here, $\bar z = a-bi-cj-dk$ is the conjugate of the quaternion $z = a +bi+cj+dk$.

Recall that an hermitian form on a right vector space $V$ over $\Hset$ is a map $H: V \times V \rightarrow \Hset$ which satisfies
$$ H(v, v_1 z_1 + v_2 z_2) = H(v,v_1)z_1 + H(v,v_2)z_2, \quad \forall v,v_1, v_2 \in V ,\, z_1,z_2 \in \Hset,$$
$$ H(v,v') = \overline {H(v',v)},\quad  \forall v,v' \in V.$$
Writing $H=H_0 + H_i i + H_j j + H_k k$, the $\Rset$-bilinear form $H_0$ is symmetric, and the $\Rset$-bilinear forms
$H_i, \, H_j,\, H_k$ are alternate. They are related by
$$ H_0(v,v') = H_i(v,v' i) = H_j (v,v' j) = H_k (v,v' k).$$
For $v,v' \in V$, $z \in \Hset$, we have $H(vz,v'z) = \bar z H(v,v') z$, hence
\begin{eqnarray*}
H_i(v i, v' i) & = & H_i(v, v'),\\
H_i(v j, v' j) & = & -H_i(v, v'),\\
H_i(v k, v' k) & = & -H_i(v, v').
\end{eqnarray*}
Conversely, if a $\Rset$-bilinear alternate form $H_i$ satisfy these relations, one defines an hermitian form by
$$ H(v,v') = H_i(v,v' i) + H_i(v,v') i + H_i(v, v'k) j - H_i(v,v' j) k.$$

On the irreducible ${\rm Aut}(M)$-module $V_a$, there exists, up to a positive real scalar, a unique positive definite
${\rm Aut}(M)$-invariant hermitian form $\langle,\rangle$ (obtained as usual by averaging over the group an arbitrary positive
definite hermitian form). The space of ${\rm Aut}(M)$-invariant $\Rset$-bilinear forms on $V_a$ is $4$-dimensional, generated by the four components of $\langle,\rangle$.

\begin{proposition}\label{p.aut-quaternion}
There exists an isomorphism of ${\rm Aut}(M)$-modules $\iota: V_a^{\ell_a} \rightarrow W_a$ and integers $p,q$ with $p+q= \ell_a$
such that one has, for $v=(v_1,\ldots,v_{\ell_a})$, $ v'=(v'_1,\ldots,v'_{\ell_a}) \in V_a^{\ell_a}$
\begin{eqnarray*}
 \sum_1^p \langle v_m,v'_m\rangle - \sum_{p+1}^{p+q} \langle v_m,v'_m\rangle & = &
 \{\iota(v),\iota(v'i )\} + \{\iota(v),\iota(v' )\} i \\
  &  & + \{\iota(v),\iota(v' k)\} j - \{\iota(v),\iota(v'j )\} k.
  \end{eqnarray*}
An element of $Sp(W_a)$ is of the form
$\iota^{-1} \circ A \circ \iota ( v_1,\ldots, v_{\ell_a}) =  (v'_1,\ldots, v'_{\ell_a})$ with
$v'_m = \sum_n v_na_{n,m}$ , $a_{n,m} \in \Hset$. The map $A \mapsto (a_{m,n})$ is an isomorphism from $Sp(W_a)$ onto the unitary group $U_{\Hset}(p,q)$ of the hermitian form $\sum_1^p \bar z_m z_m - \sum_{p+1}^{p+q} \bar z_m z_m$ over $\Hset ^{\ell_a}$.
\end{proposition}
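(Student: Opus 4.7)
The plan is to follow the template of the complex case, with the three right-multiplications $v\mapsto vi$, $v\mapsto vj$, $v\mapsto vk$ playing the combined role that multiplication by $i\in\Cset$ played there, and using the algebraic recipe recalled just before the proposition to manufacture a hermitian form out of an alternating $\Rset$-bilinear one.

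First, I would fix an arbitrary isomorphism of ${\rm Aut}(M)$-modules $\iota':V_a^{\ell_a}\to W_a$ and transport the symplectic form to an alternating, ${\rm Aut}(M)$-invariant $\Rset$-bilinear form $B$ on $V_a^{\ell_a}$. The key preliminary observation is that the three sign relations
\[
 B(vi,v'i)=B(v,v'),\quad B(vj,v'j)=-B(v,v'),\quad B(vk,v'k)=-B(v,v')
\]
required to feed $B$ into the hermitian-construction recipe are automatic. Indeed, the space of ${\rm Aut}(M)$-invariant $\Rset$-bilinear forms on $V_a$ is $4$-dimensional and spanned by the four components of $\langle\cdot,\cdot\rangle$, and by the identities listed in the excerpt all four satisfy the three relations. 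Any invariant form on $V_a^{\ell_a}$ is a sum of such forms evaluated on pairs of coordinates, so the relations pass from $V_a$ to $V_a^{\ell_a}$.

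Next, I would define
\[
 H_{\iota'}(v,v'):=B(v,v'i)+B(v,v')\,i+B(v,v'k)\,j-B(v,v'j)\,k.
\]
By the converse part of the recipe, $H_{\iota'}$ is an $\Hset$-hermitian form on $V_a^{\ell_a}$; it is ${\rm Aut}(M)$-invariant because $B$ is and because right-multiplication by $i,j,k$ commutes with the ${\rm Aut}(M)$-action; and it is non-degenerate because its $i$-component $B$ is. Schur's lemma applied to the quaternionic irreducible $V_a$, together with the uniqueness up to a positive scalar of the invariant hermitian form $\langle\cdot,\cdot\rangle$ on $V_a$, lets me write
\[
 H_{\iota'}(v,v')=\sum_{m,n}\langle v_m,v'_n\rangle\,c_{m,n}
\]
for a hermitian matrix $C=(c_{m,n})\in M(\ell_a,\Hset)$.

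Finally, I invoke Sylvester's law of inertia over $\Hset$ to diagonalize $C$ as $P^{\ast}CP=\mathrm{diag}(1,\ldots,1,-1,\ldots,-1)$ with $p$ plus signs and $q=\ell_a-p$ minus signs, and compose $\iota'$ with right-multiplication by $P$, which is an automorphism of the ${\rm Aut}(M)$-module $V_a^{\ell_a}$ by Schur, to obtain the desired $\iota$. Reading off the $i$-component of the resulting $H_\iota$ produces the displayed expression for the symplectic form. The identification $Sp(W_a)\simeq U_{\Hset}(p,q)$ is then immediate: an ${\rm Aut}(M)$-endomorphism of $V_a^{\ell_a}$ is precisely right-multiplication by an $\ell_a\times\ell_a$ quaternionic matrix, and preserving the symplectic form amounts to preserving $H_\iota$, which is the defining condition of $U_{\Hset}(p,q)$. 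The main potential pitfall is bookkeeping: the $\Hset$-action on $V_a$ is a right action twisted by conjugation ($vz=\bar zv$), hermitian forms on right $\Hset$-modules are conjugate-linear in the first argument and linear in the second, and the sign pattern in the definition of $H_{\iota'}$ is not symmetric in $i,j,k$. Verifying that all these conventions align so that $H_{\iota'}(v,v'z)=H_{\iota'}(v,v')z$ for every $z\in\{i,j,k\}$ and $H_{\iota'}(v',v)=\overline{H_{\iota'}(v,v')}$ is the only delicate computation in the argument.
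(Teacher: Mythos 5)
There is a genuine gap at the heart of your argument: the claim that the three sign relations
\[
B(vi,v'i)=B(v,v'),\qquad B(vj,v'j)=-B(v,v'),\qquad B(vk,v'k)=-B(v,v')
\]
are \emph{automatic} for every ${\rm Aut}(M)$-invariant alternating form is false. The identity $H(vz,v'z)=\bar z H(v,v')z$ gives that pattern of signs only for the $i$-component $H_i$ of the invariant hermitian form; a direct computation shows that $H_j$ satisfies the pattern $(-,+,-)$ and $H_k$ the pattern $(-,-,+)$ (and the symmetric component $H_0$ satisfies $(+,+,+)$). A general invariant alternating form on $V_a$ is an arbitrary linear combination $\alpha H_i+\beta H_j+\gamma H_k$, which satisfies your three relations only when $\beta=\gamma=0$. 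Consequently the form $H_{\iota'}$ you build from an arbitrary transported symplectic form need not be $\Hset$-sesquilinear at all, and the "delicate verification" you flag at the end of your proposal does not in fact go through. Once $H_{\iota'}$ fails to be hermitian, the appeal to Sylvester's law over $\Hset$ has nothing to apply to.

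This is exactly the point of Lemma \ref{l.normalform-q} in the paper: one must first \emph{twist} the form by a unit quaternion $u$ (replacing $b$ by $b_u(v,v')=b(vu,v'u)$) so that its adjoint involution $\sigma_b$ coincides with $\sigma_{B_i}$, i.e.\ so that the form is aligned with the $i$-axis. Because the required quaternion $u$ varies from one irreducible summand to another, the paper must first decompose $W_a$ into $B$-orthogonal irreducible submodules (Lemma \ref{l.reducibility-q}, which in turn needs the preliminary lemma guaranteeing that the restriction of the symplectic form to some irreducible submodule is nonzero), and only then twist block by block. Your global route also runs into the obstruction that the commutant of $V_a^{\ell_a}$ is $M(\ell_a,\Hset)$ rather than $\Hset$, so a single global conjugation by a scalar quaternion cannot replace the block-by-block alignment. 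The remainder of your outline (diagonalizing the resulting hermitian matrix, identifying $Sp(W_a)$ with $U_{\Hset}(p,q)$ via Schur's lemma) is sound once the alignment step is supplied, but as written the proof is missing its essential ingredient.
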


The proof of this proposition relies on the following three lemmas:

\begin{lemma}
Let $G$ be a finite group, and let $W$ be an isotypic $G$-module of quaternionic type. Let $B$ be an alternate non-degenerate $G$-invariant bilinear form on $W$. Then, there exists a non-zero vector $v \in W$ and $g \in G$ such that
$B(v,g.v) \ne 0$.
\end{lemma}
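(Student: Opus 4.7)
The plan is to argue by contradiction and show that the hypothesis $B(v,g.v)=0$ for all $v\in W$ and all $g\in G$ forces every element of $G$ to act on $W$ as an involution, which is incompatible with the quaternionic type.

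First, fix $g\in G$ and consider the quadratic form $q_g(v)=B(v,g.v)$ on $W$. Polarizing yields the associated symmetric bilinear form
\begin{equation*}
(v,w)\longmapsto B(v,g.w)+B(w,g.v),
\end{equation*}
and the assumption that $q_g\equiv 0$ is equivalent to the vanishing of this symmetric form. Using that $B$ is alternate and $G$-invariant, I would rewrite $B(w,g.v)=-B(g.v,w)=-B(v,g^{-1}.w)$, so the vanishing condition becomes
\begin{equation*}
B(v,g.w)=B(v,g^{-1}.w)\qquad\text{for all } v,w\in W.
\end{equation*}
Since $B$ is non-degenerate, this forces $g.w=g^{-1}.w$ for every $w\in W$, i.e.\ $g^2$ acts trivially on $W$.

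Applying this to every $g\in G$, the image of $G$ in $\mathrm{GL}(W)$ is a group in which every element has order dividing $2$, hence an elementary abelian $2$-group. Every real irreducible representation of such a group is one-dimensional (the group is abelian and all eigenvalues are $\pm 1$), so $W$ decomposes as a direct sum of one-dimensional real $G$-submodules. In particular, $W$ contains no irreducible summand of dimension $\geq 4$, and \emph{a fortiori} no summand of quaternionic type: this contradicts the hypothesis that $W$ is isotypic of quaternionic type (for which the underlying irreducible has real dimension a multiple of $4$, since it is a right $\Hset$-vector space). Hence there must exist $v\neq 0$ and $g\in G$ with $B(v,g.v)\neq 0$.

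The only step requiring any care is the polarization combined with $G$-invariance and the alternating property, which is a short formal calculation; the rest is just the observation that quaternionic irreducibles have real dimension at least $4$, so cannot occur in a representation factoring through an elementary abelian $2$-group.
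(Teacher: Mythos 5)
Your proof is correct and follows essentially the same route as the paper: assume the form $q_g(v)=B(v,g.v)$ vanishes identically, polarize, use the alternating property and $G$-invariance to deduce $g^2$ acts trivially on $W$ via non-degeneracy, and conclude that $G$ acts through an abelian group, which is incompatible with $W$ being quaternionic. Your final step merely spells out in more detail why an elementary abelian $2$-group admits no quaternionic isotypic component, which the paper leaves implicit.
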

\begin{proof}
Assume that the conclusion of the lemma does not hold. Then, one has $B(v,g.v')+ B(v',g.v)=0$ for all $v,v' \in W$. As
$B$ is alternate and $G$-invariant, one has $B(v,g^2 .v') = B(v,v')$ for all $v,v' \in W,\, g \in G$. As $B$ is non-degenerate, this implies that $g^2 .v'=v'$ for all $v' \in W, \, g \in G$. Thus $G$ acts through a group where all non trivial elements are of order $2$. Such a group is abelian and $W$ cannot be quaternionic.
\end{proof}

\begin{lemma}\label{l.reducibility-q}
Under the hypotheses of the lemma above, one can write
$$ W = V_1 \oplus \ldots \oplus V_{\ell},$$
where $V_1$, \ldots, $V_{\ell}$ are irreducible $G$-modules which are orthogonal for $B$.
\end{lemma}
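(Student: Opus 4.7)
The plan is to proceed by induction on $\dim_{\Rset} W$, reducing the inductive step to the problem of producing a single irreducible $G$-submodule $V_1 \subset W$ on which $B$ restricts non-degenerately. Once such a $V_1$ is in hand, its $B$-orthogonal $V_1^{\perp}$ will be a $G$-submodule (by $G$-invariance of $B$), the non-degeneracy of $B|_{V_1}$ will give $V_1 \cap V_1^{\perp} = 0$, a dimension count (from the non-degeneracy of $B$ on $W$) will yield $W = V_1 \oplus V_1^{\perp}$, and $B$ will automatically be non-degenerate on $V_1^{\perp}$. Since $V_1^{\perp}$ inherits the isotypic quaternionic structure, the inductive hypothesis will produce a decomposition $V_1^{\perp} = V_2 \oplus \cdots \oplus V_\ell$ into pairwise $B$-orthogonal irreducibles, completing the proof.

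To produce $V_1$, I would first apply the preceding lemma to obtain $v \in W \setminus \{0\}$ and $g \in G$ with $B(v, g.v) \neq 0$, and fix any decomposition $W = U_1 \oplus \cdots \oplus U_k$ into irreducible $G$-submodules. For each $i$, the radical of $B|_{U_i}$ is a $G$-submodule of the irreducible $U_i$, hence either all of $U_i$ or zero, so $B|_{U_i}$ is either identically zero or non-degenerate. If some $B|_{U_i}$ is non-zero, then $V_1 := U_i$ does the job.

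The remaining case, in which $B|_{U_i} = 0$ for every $i$, is the main obstacle. Writing $v = v_1 + \cdots + v_k$ and expanding $B(v, g.v) = \sum_{i,j} B(v_i, g.v_j)$, the $G$-stability of each $U_j$ forces some cross term $B(v_i, g.v_j)$ with $i \neq j$ to be non-zero, yielding a non-zero $G$-invariant pairing $U_i \times U_j \to \Rset$ which is then perfect by Schur's lemma. Choosing any $G$-isomorphism $\phi: U_i \to U_j$ (available since $W$ is isotypic), for each $\lambda \in {\rm End}_G(U_i) \cong \Hset$ I would consider the irreducible $G$-submodule $V_\lambda := \{u + \phi(\lambda u) : u \in U_i\}$ of $U_i \oplus U_j$ and compute
$$B|_{V_\lambda}(u + \phi(\lambda u),\, u' + \phi(\lambda u')) = C_\lambda(u, u') - C_\lambda(u', u),$$
where $C_\lambda(u, u') := B(u, \phi(\lambda u'))$ is a $G$-invariant bilinear form on $U_i$. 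The assignment $\lambda \mapsto C_\lambda$ is $\Rset$-linear and injective, since $C_\lambda \equiv 0$ would force $\phi(\lambda u') = 0$ for all $u'$ by perfectness of the pairing, hence $\lambda = 0$.

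The quaternionic hypothesis is what unlocks the argument: as recalled just before the statement of the lemma, on the quaternionic irreducible $U_i$ the space of $G$-invariant $\Rset$-bilinear forms is $4$-dimensional, with a $3$-dimensional alternating part. Therefore $\lambda \mapsto C_\lambda$ is an isomorphism onto this $4$-dimensional space, so one can pick $\lambda$ for which $C_\lambda$ has non-zero antisymmetric component; for such $\lambda$, $B|_{V_\lambda}$ is a non-zero $G$-invariant alternating form on the irreducible $V_\lambda$, hence non-degenerate by the radical argument, and $V_1 := V_\lambda$ completes the reduction. Without this three-dimensional space of invariant alternating forms (which is absent in the real case and only one-dimensional in the complex case), the family $\{V_\lambda\}$ could consist entirely of $B$-isotropic submodules, so the argument genuinely uses the quaternionic assumption.
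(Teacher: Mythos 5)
Your proof is correct, and its skeleton --- produce one irreducible submodule $V_1$ on which $B$ restricts non-degenerately, split off its $B$-orthogonal complement, and induct --- is exactly the paper's. The difference lies in how $V_1$ is produced. The paper simply takes ``the irreducible $G$-module generated by $v$,'' where $v$ comes from the preceding lemma; read literally this is imprecise, since in a quaternionic isotypic module with $\dim_{\Hset}V_a>1$ the cyclic submodule generated by a vector need not be irreducible (in $V_a\oplus V_a$ the vector $(v_1,v_2)$ lies in an irreducible submodule only when $v_2\in v_1\Hset$). Your detour --- fix a decomposition $U_1\oplus\cdots\oplus U_k$ into irreducibles, dispose of the case where some $B|_{U_i}$ is nonzero by the radical argument, otherwise extract a perfect $G$-invariant pairing $U_i\times U_j$ from a nonvanishing cross term, and sweep through the graph submodules $V_\lambda$ using the injection $\lambda\mapsto C_\lambda$ of $\Hset$ into the $4$-dimensional space of invariant bilinear forms on $U_i$ --- supplies a complete justification of this step, and as a by-product makes the appeal to the preceding lemma essentially superfluous: non-degeneracy of $B$ alone already forces some restriction $B|_{U_i\times U_j}$ to be nonzero. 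Your closing remark correctly locates where quaternionicity enters: the $3$-dimensional space of invariant alternating forms on $V_a$ is what guarantees some $C_\lambda$ has nonzero antisymmetric part, and the conclusion does fail for real type (where no irreducible submodule can carry a nonzero invariant alternating form, consistently with the off-diagonal normal form of Proposition 3.16). In short: correct, same strategy, but with the key step carried out in full where the paper only gestures at it.
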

\begin{proof}
This is an immediate induction on the multiplicity $\ell$ of $W$.
From the lemma above, one can find $v \in W$ such that the restriction of $B$ to the irreducible $G$-module $V_1$ generated by $v$ is nonzero.
Because $V_1$ is irreducible and $B$ is $G$-invariant, this restriction is non-degenerate.
Then, the $B$-orthogonal $W'$ of $V_1$ in $W$ is $G$-invariant and satisfies $W= V_1 \oplus W'$.
 We conclude by applying to $W'$ the induction hypothesis.
\end{proof}

\begin{lemma}\label{l.normalform-q}
Let $b$ be an alternate ${\rm Aut}(M)$-invariant nonzero $\Rset$-bilinear form on $V_a$.
There exists $u \in  \Hset$ with $\bar u u = 1$ such that the form $b_u(v,v'):= b(vu,v'u)$ satisfies
\begin{eqnarray*}
b_u(v i, v' i) & = & b_u(v, v'),\\
b_u(v j, v' j) & = & - b_u(v, v'),\\
b_u(v k, v' k) & = & - b_u(v, v').
\end{eqnarray*}
\end{lemma}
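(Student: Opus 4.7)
The plan is to use the classification of ${\rm Aut}(M)$-invariant $\Rset$-bilinear alternate forms on $V_a$ recalled in the paragraph preceding the statement, and then exploit the conjugation action of unit quaternions on the pure imaginary quaternions --- which realizes the classical double cover $S^3 \twoheadrightarrow SO(3)$ --- to bring $b$ into the required normal form.

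First I would observe that the space of ${\rm Aut}(M)$-invariant $\Rset$-bilinear forms on $V_a$ is four-dimensional, spanned by the components $H_0, H_i, H_j, H_k$ of the hermitian form $\langle\cdot,\cdot\rangle$. Since $H_0$ is symmetric while $H_i, H_j, H_k$ are alternate, the subspace of alternate invariant forms is three-dimensional with basis $\{H_i, H_j, H_k\}$. Thus I can write $b = \alpha H_i + \beta H_j + \gamma H_k$ for some $(\alpha, \beta, \gamma) \in \Rset^3 \setminus \{0\}$.

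Second, for $u \in \Hset$ with $\bar u u = 1$, the right multiplication $R_u : v \mapsto vu$ commutes with the left ${\rm Aut}(M)$-action (the ${\rm Aut}(M)$-action being $\Hset$-linear, since $V_a$ is an $({\rm Aut}(M), \Hset)$-bimodule). Hence $b_u$ is again an alternate ${\rm Aut}(M)$-invariant form and expands as $b_u = \alpha' H_i + \beta' H_j + \gamma' H_k$. To identify the map $(\alpha, \beta, \gamma) \mapsto (\alpha', \beta', \gamma')$ I would invoke the identity $\langle vu, v'u \rangle = \bar u \langle v, v' \rangle u$ (immediate from sesquilinearity). Because $H_0, H_i, H_j, H_k$ take real values, expanding both sides in the quaternion basis $\{1,i,j,k\}$ shows that this coefficient map coincides with the conjugation action $x \mapsto \bar u x u$ on ${\rm Im}(\Hset) \cong \Rset^3$.

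Finally, as $u$ ranges over unit quaternions, so does $\bar u$, and the standard double cover $S^3 \twoheadrightarrow SO(3)$ guarantees that every rotation of $\Rset^3$ is realized by such a conjugation. Since $SO(3)$ acts transitively on each sphere in $\Rset^3$, I can choose $u$ so that $(\alpha', \beta', \gamma') = (\lambda, 0, 0)$ with $\lambda = \sqrt{\alpha^2+\beta^2+\gamma^2} > 0$; then $b_u = \lambda H_i$, which inherits from $H_i$ the three required transformation rules. The only genuinely non-routine step is the identification of the twisting action on the coefficients $(\alpha,\beta,\gamma)$ with the conjugation action of unit quaternions on ${\rm Im}(\Hset)$; once that identification is made, the conclusion is just transitivity of $SO(3)$ on spheres.
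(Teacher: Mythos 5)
Your proof is correct, but it follows a genuinely different route from the paper's. The paper attacks the problem through the adjoint involution $\sigma_b:\Hset\to\Hset$ defined by $b(v,v'a)=b(v\sigma_b(a),v')$ (using that a nonzero alternate invariant form is automatically non-degenerate): a deformation argument through the family of nonzero alternate invariant forms shows that $\sigma_b$ fixes a hyperplane containing $\Rset$ and negates a purely imaginary unit quaternion $a_0$, the transformation rule $\sigma_{b_u}(a)=u\sigma_b(\bar u a u)\bar u$ is computed, and $u$ is chosen so that $\bar u i u = a_0$. You instead expand $b=\alpha H_i+\beta H_j+\gamma H_k$ in the $3$-dimensional space of alternate invariant forms and read off from $\langle vu,v'u\rangle=\bar u\langle v,v'\rangle u$ that passing from $b$ to $b_u$ rotates the coefficient vector inside ${\rm Im}(\Hset)\cong\Rset^3$; transitivity of $SO(3)$ on spheres then lands you on $\lambda H_i$, which visibly satisfies the three relations. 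Your version is shorter and avoids both the adjoint map and the deformation argument, at the price of leaning directly on the fact (which the paper also records and uses) that $\{H_i,H_j,H_k\}$ spans the alternate invariant forms. Both proofs ultimately rest on the same input, namely that conjugation by unit quaternions realizes all of $SO(3)$ on ${\rm Im}(\Hset)$. One cosmetic remark: the rotation induced on the \emph{coefficient} vector $(\alpha,\beta,\gamma)$ is the inverse of the rotation $x\mapsto\bar u x u$ acting on the \emph{value} vector $\bigl(H_i(v,v'),H_j(v,v'),H_k(v,v')\bigr)$, i.e.\ it is $x\mapsto u x\bar u$; since you quantify over all unit $u$ and either family exhausts $SO(3)$, this does not affect the argument.
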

\begin{proof} Any nonzero alternate ${\rm Aut}(M)$-invariant $\Rset$-bilinear form $b$ on $V_a$ is non-degenerate.
This allows to define an adjoint map $\sigma_b: \Hset \rightarrow \Hset$ through $b(v,v'a)= b(v\sigma_b(a),v')$ (the
$\Rset$-endomorphism $\sigma_b(a)$ of $V_a$ belongs to $\Hset$ as it commutes with the action of ${\rm Aut}(M)$).
The map $\sigma_b$ is a $\Rset$-linear involution satisfying $\sigma_b(a a')= \sigma_b(a') \sigma_b(a)$ and $\sigma_b(a) = a$ for $a \in \Rset$.
 Therefore $\sigma_b$ preserves the set of quaternions $a$ such that $a^2=-1$, which is nothing else than the purely imaginary quaternions of norm $1$.
 Observe that, for the $i$-component $B_i$ of the hermitian scalar product on $V_a$, one has
$$\sigma_{B_i}(i)=-i, \;\sigma_{B_i}(j)=j, \; \sigma_{B_i}(k)=k,$$
and that the properties of $b_u$ required in the statement of the lemma are equivalent to $\sigma_{b_u} = \sigma_{B_i}$.

As any nonzero alternate ${\rm Aut}(M)$-invariant $\Rset$-bilinear form $b$ on $V_a$ is a linear combination of the imaginary components $B_i$, $B_j$, $B_k$ of
$\langle,\rangle$ and thus can be deformed to $B_i$ through nonzero alternate ${\rm Aut}(M)$-invariant $\Rset$-bilinear forms,
we conclude that $\sigma_b$ is the identity on a hyperplane of $\Hset$ containing $\Rset$, and that there exists a unique (up to sign)
quaternion $a_0$ of norm $1$ such that $\sigma_b(a_0)=-a_0$. Moreover, $a_0$ is purely imaginary.

Next we relate, for $\bar u u = 1$,  $\sigma_{b_u}$ to $\sigma_b$. For $v,v' \in V_a$, we have
\begin{eqnarray*}
b_u(v,v'a)& = & b(vu, v'au) \\
          & = & b(vu, v'u\bar u a u) \\
          & = & b(vu\sigma_b(\bar u a u), v'u) \\
          & = & b(vu\sigma_b(\bar u a u)\bar u u, v'u) \\
          & = & b_u(vu\sigma_b(\bar u a u)\bar u, v'),
\end{eqnarray*}
and thus $\sigma_{b_u}(a) = u \sigma_b(\bar u a u)\bar u$. Hence, we have $\sigma_{b_u} = \sigma_{B_i}$ iff  $\bar u i u = \pm a_0$. As $a_0$ is purely imaginary of norm $1$, it is possible to choose $u$ such that $\bar u i u = a_0$, so that $b_u$ has the required properties.
\end{proof}

\begin{proof}[Proof of Proposition \ref{p.aut-quaternion}] According to Lemma \ref{l.reducibility-q} above, one can choose an isomorphism of ${\rm Aut}(M)$-modules
$\iota_0: V_a^{\ell_a} \rightarrow W_a$ such that the symplectic form is written in a diagonal way as
$$ \{\iota_0(v), \iota_0(v')\} = \sum_1^ {\ell_a} \hat{b}^{(m)}(v_m,v'_m),$$
for some alternate nonzero ${\rm Aut}(M)$-invariant $\Rset$-bilinear forms $\hat{b}^{(m)}$ on $V_a$.
According to Lemma \ref{l.normalform-q}, one can find quaternions $u_1, \ldots , u_{\ell_a}$ of norm $1$ such that, setting
$\iota_1(v_1, \ldots , v_{\ell_a}) = \iota_0( u_1 v_1, \ldots, u_{\ell_a} v_{\ell_a})$, one has
 $$ \{\iota_1(v), \iota_1(v')\} = \sum_1^ {\ell_a} b^{(m)}(v_m,v'_m),$$
 with the  alternate nonzero ${\rm Aut}(M)$-invariant $\Rset$-bilinear forms $b^{(m)}$ satisfying, for all $1 \leq m \leq \ell_a$,
  \begin{eqnarray*}
b^{(m)}(v i, v' i) & = & b^{(m)}(v, v'),\\
b^{(m)}(v j, v' j) & = & - b^{(m)}(v, v'),\\
b^{(m)}(v k, v' k) & = & - b^{(m)}(v, v').
\end{eqnarray*}
Set $H_i(v,v'):=\{\iota_1(v),\iota(v')\}$ and
$$ H(v,v') := H_i(v,v'i) + H_i(v,v'j) i + H_i(v,v'k) j - H_i(v,v'j) k.$$
This is a non-degenerate ${\rm Aut}(M)$-invariant hermitian form on $W_a$ which can be written as
 $$ H(v,v') = \sum_1 ^{\ell_a} c_m \langle v_m,v'_m \rangle,$$
 for some nonzero real numbers $c_m$. Composing $\iota_1$ with a real diagonal map and permuting the coordinates
if necessary, we may assume that $c_m =1$ for $1 \leq m \leq p$, $c_m=-1$ for $p+1 \leq m \leq p+q = \ell_a$
(for some integers $p,q$), which proves the first assertion of the proposition. For the second assertion,
any automorphism $A$ of the ${\rm Aut}(M)$-module $W_a$ is of the form
 $\iota^{-1} \circ A \circ \iota ( v_1,\ldots, v_{\ell_a}) =  (v'_1,\ldots, v'_{\ell_a})$ with
$v'_m = \sum_n v_n a_{n,m}$ , $a_{n,m} \in \Hset$. Moreover $A$ preserves the symplectic form on $W_a$
iff  $\iota^{-1} \circ A \circ \iota$ preserves the hermitian form
$$ \sum_1^p \langle v_m,v'_m \rangle - \sum_{p+1}^{p+q} \langle v_m,v'_m \rangle$$
on $V_a^{\ell_a}$. But this last condition is equivalent to the property that the matrix $(a_{m,n})$ belongs to
the unitary group $U_{\Hset}(p,q)$ of the hermitian form $\sum_1^p \bar z_m z_m - \sum_{p+1}^{p+q} \bar z_m z_m$
over $\Hset ^{\ell_a}$.
\end{proof}
\end{itemize}

\section{ The affine group and the Kontsevich-Zorich cocycle}

\subsection{The affine group}

Let $\pi: M \rightarrow \Tset^2$ be an origami. We assume that it is {\it reduced}, i.e one cannot factor
$\pi = p \circ \pi'$ with $\pi':M \rightarrow \Tset^2$ an origami and $p: \Tset^2 \rightarrow \Tset^2$ a
covering of degree $>1$.

\begin{definition}
 The affine group of $M$, denoted by ${\rm Aff}(M)$, is the group of homeomorphisms of $M$ which are lifts of
linear automorphisms of $\Tset^2$.
 \end{definition}

 We identify the group of linear automorphisms of $\Tset^2$ with $GL(2,\Zset)$.
The image of the affine group in $GL(2,\Zset)$ is a subgroup of finite index called the {\it Veech group} of $M$
 and denoted by $GL(M)$. We thus have an exact sequence
 $$1\to\textrm{Aut}(M)\to \textrm{Aff}(M)\to GL(M)\to 1.$$

We will denote by $\textrm{Aff}_+(M)$ the subgroup (of index $1$ or $2$) of $\textrm{Aff}(M)$
formed by the orientation-preserving homeomorphisms. Its image in  $GL(2,\Zset)$ is the intersection
of $GL(M)$ with $SL(2,\Zset)$ and is denoted by $SL(M)$.

\smallskip

Let $\Kset$ be a subfield of $\Cset$. The group ${\rm Aff}(M)$ acts on $H_1(M,\Kset)$, and preserves the decomposition
\begin{equation*}
 H_1 (M, \Kset) = H_1^{st} (M, \Kset) \oplus H_1^{(0)} (M,\Kset).
\end{equation*}

Identifying $ H_1^{st} (M, \Kset)$ with $\Kset^2$, the action on  $ H_1^{st} (M, \Kset)$ is given by the homomorphism
$ \textrm{Aff}(M)\to GL(M)$ composed with the canonical action of $GL(M) \subset GL(2,\Kset)$ on $\Kset^2$.
In order to study the action on $H_1^{(0)} (M,\Kset)$, we restrict to a subgroup of finite index of $ \textrm{Aff}(M)$
in the following way.

\smallskip

The affine group acts by conjugation on the automorphism group $\Gamma := \textrm{Aut}(M)$. We thus have a natural
 homomorphism from $\textrm{Aff}(M)$ into the finite group $\textrm{Aut}(\Gamma)$. We denote by $\textrm{Inn}(\Gamma)$
the (normal) subgroup of inner automorphisms and by $\textrm{Out}(\Gamma)$ the quotient
$\textrm{Aut}(\Gamma) / \textrm{Inn}(\Gamma)$.

\smallskip

\begin{definition}
The intersection of $\textrm{Aff}_+(M)$ with the kernel of the morphism
$$\textrm{Aff}(M) \rightarrow \textrm{Aut}(\Gamma)$$
is called the {\it special restricted affine group} and is denoted by $\textrm{Aff}_{**}(M)$.
\smallskip

The intersection of $\textrm{Aff}_+(M)$ with the kernel of the composition
$$\textrm{Aff}(M) \rightarrow \textrm{Aut}(\Gamma) \rightarrow \textrm{Out}(\Gamma)$$
is called the {\it restricted affine group}
 and is denoted by $\textrm{Aff}_*(M)$. It is equal to the subgroup of  $\textrm{Aff}_+(M)$ generated by
$\Gamma=\textrm{Aut}(M)$ and $\textrm{Aff}_{**}(M)$. Thus the images of  $\textrm{Aff}_{**}(M)$ and $\textrm{Aff}_*(M)$ in
$SL(M)$ are equal. This image is called the {\it restricted Veech group} and denoted by $SL_*(M)$.
\end{definition}

\smallskip

The group $\textrm{Out}(\Gamma)$ acts on the set ${\mathcal Irr}_{\Kset}(\Gamma)$ of isomorphism classes of
irreducible representations over $\Kset$ of $\Gamma$. Composing with the homomorphism
$\textrm{Aff}(M) \to \textrm{Out}(\Gamma)$, we get an action of $\textrm{Aff}(M)$ on
${\mathcal Irr}_{\Kset}(\Gamma)$ with a trivial restriction to $\textrm{Aff}_*(M)$. Let

$$  H_1^{(0)}(M, \Kset) = \oplus_{a \in {\mathcal Irr}_{\Kset} (\Gamma)} W_a$$
be the decomposition into isotypic components.

\begin{proposition} For any $A\in\textrm{Aff}(M)$, $a \in  {\mathcal Irr}_{\Kset} (\Gamma)$, we have $A(W_a) = W_{A.a}$.
\end{proposition}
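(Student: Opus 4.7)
The plan is to exploit the intertwining relation between $A \in \textrm{Aff}(M)$ and $\Gamma = \textrm{Aut}(M)$: since $\Gamma$ is normal in $\textrm{Aff}(M)$, for any $\gamma \in \Gamma$ and $v \in H_1^{(0)}(M,\Kset)$ we have
\[ A(\gamma \cdot v) = (A\gamma A^{-1}) \cdot A(v) = \sigma_A(\gamma) \cdot A(v), \]
where $\sigma_A$ is the conjugation automorphism sending $\gamma$ to $A\gamma A^{-1}$. Thus $A$ is $\Kset$-linear and $\Gamma$-semilinear in the sense that it conjugates the $\Gamma$-action by $\sigma_A$; this is precisely the kind of map under which one expects an isotypic component to get relabeled by the corresponding action on $\mathcal{Irr}_{\Kset}(\Gamma)$.

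First, I would recall the intrinsic description of $W_a$: it is the (unique) maximal $\Kset[\Gamma]$-submodule of $H_1^{(0)}(M,\Kset)$ all of whose irreducible constituents lie in the class $a$, or equivalently the sum of all $\Kset[\Gamma]$-submodules isomorphic to a representative $V_a$ of $a$. Over $\Cset$ this is the image of the central idempotent $e_a = \frac{\dim V_a}{|\Gamma|}\sum_\gamma \overline{\chi_a(\gamma)}\,\gamma \in \Cset[\Gamma]$; for a general subfield $\Kset \subset \Cset$ one uses the analogous central idempotent in $\Kset[\Gamma]$ furnished by the Wedderburn decomposition discussed in the previous subsection.

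Next, from the intertwining relation I would verify two things. (i) If $U \subseteq H_1^{(0)}(M,\Kset)$ is any $\Kset[\Gamma]$-submodule, then $A(U)$ is again a $\Kset[\Gamma]$-submodule, because $\gamma \cdot A(U) = A(\sigma_A^{-1}(\gamma)\cdot U) \subseteq A(U)$. (ii) The $\Kset$-linear isomorphism $A|_U \colon U \to A(U)$ intertwines the given $\Gamma$-action on $U$ with the $\sigma_A^{-1}$-twist of the $\Gamma$-action on $A(U)$; equivalently, $A(U)$ with its inherited $\Gamma$-structure is isomorphic to the $\sigma_A^{-1}$-twist of $U$. By the very definition of the action of $\textrm{Aff}(M) \to \textrm{Aut}(\Gamma) \to \textrm{Out}(\Gamma)$ on $\mathcal{Irr}_{\Kset}(\Gamma)$, this twist sends the class $a$ to $A \cdot a$. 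Applying this to each irreducible $\Kset[\Gamma]$-submodule whose sum is $W_a$ yields $A(W_a) \subseteq W_{A \cdot a}$; running the same argument for $A^{-1}$ and $A \cdot a$ gives $A^{-1}(W_{A\cdot a}) \subseteq W_a$, whence equality.

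The only real obstacle is bookkeeping the correct convention for the $\textrm{Out}(\Gamma)$-action on $\mathcal{Irr}_{\Kset}(\Gamma)$ so that the twist identified in step (ii) genuinely matches $A \cdot a$; this is a pure convention check with no content. As a cleaner alternative, when $\Kset = \Cset$, I would simply conjugate the projector: a change of summation variable $\gamma' = A\gamma A^{-1}$ gives $A e_a A^{-1} = \frac{\dim V_a}{|\Gamma|}\sum_{\gamma'} \overline{\chi_a(\sigma_A^{-1}(\gamma'))}\gamma' = e_{A\cdot a}$, so $A(W_a) = A(\mathrm{Im}\,e_a) = \mathrm{Im}(Ae_aA^{-1}) = \mathrm{Im}(e_{A\cdot a}) = W_{A\cdot a}$; the general $\Kset$ case follows either by the same computation with the Wedderburn central idempotent, or by extending scalars to $\Cset$ and grouping Galois orbits.
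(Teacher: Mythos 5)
Your argument is correct and is essentially the paper's own proof: both show that the conjugation relation $A\gamma A^{-1}=\sigma_A(\gamma)$ makes $A(W_a)$ a $\Gamma$-submodule whose constituents lie in the class $A.a$, hence $A(W_a)\subseteq W_{A.a}$, and then apply the same reasoning to $A^{-1}$ and $A.a$ to get equality. The extra material on central idempotents is a fine alternative route but is not needed; the double-inclusion argument you give is exactly what the paper does.
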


\begin{proof} Let   $A\in\textrm{Aff}(M)$, $a \in  {\mathcal Irr}_{\Kset} (\Gamma)$. For any  $g\in\Gamma$,
$AgA^{-1}$ preserves $ A(W_a)$; in view of the definitions, $A(W_a)$ is a $\Gamma$-submodule of $W_{A.a}$.
The same argument applied to $A^{-1}$ and $A.a$ shows that $A(W_a) = W_{A.a}$.
\end{proof}

\begin{corollary}
 For any $A\in\textrm{Aff}_*(M)$, $a \in  {\mathcal Irr}_{\Kset} (\Gamma)$, $A$ preserves $W_a$. Moreover, if  $A\in\textrm{Aff}_{**}(M)$ and
$\Kset = \Rset$, the restriction of $A$ to $W_a$ belongs to the group $Sp(W_a)$ .
\end{corollary}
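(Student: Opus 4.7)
The plan is to deduce both assertions directly from the preceding Proposition together with the definitions of $\textrm{Aff}_*(M)$ and $\textrm{Aff}_{**}(M)$, without any substantial new computation.

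For the first assertion, I would first observe that the action of $\textrm{Aff}(M)$ on ${\mathcal Irr}_{\Kset}(\Gamma)$ was defined to factor through $\textrm{Out}(\Gamma)$: indeed, an inner automorphism $g \mapsto \gamma g \gamma^{-1}$ of $\Gamma$ sends any irreducible representation $\rho$ to an isomorphic one (with intertwiner $\rho(\gamma)$), so its action on isomorphism classes is trivial. Since $\textrm{Aff}_*(M)$ was defined as (the intersection with $\textrm{Aff}_+(M)$ of) the kernel of $\textrm{Aff}(M)\to\textrm{Out}(\Gamma)$, any $A\in\textrm{Aff}_*(M)$ acts trivially on ${\mathcal Irr}_{\Kset}(\Gamma)$, so $A.a=a$. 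The preceding Proposition then gives $A(W_a)=W_{A.a}=W_a$.

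For the second assertion, assume moreover that $A\in\textrm{Aff}_{**}(M)$ and $\Kset=\Rset$. By definition of $\textrm{Aff}_{**}(M)$, $A$ lies in the kernel of $\textrm{Aff}(M)\to\textrm{Aut}(\Gamma)$, which means $AgA^{-1}=g$ in $\textrm{Aff}(M)$ for every $g\in\Gamma$. Hence, on the invariant subspace $W_a$ provided by the first part, the endomorphism $A|_{W_a}$ commutes with the $\Gamma$-action, i.e.\ it is a $\Gamma$-module automorphism of $W_a$. On the other hand, $\textrm{Aff}_{**}(M)\subset\textrm{Aff}_+(M)$ consists of orientation-preserving homeomorphisms of $M$, which automatically preserve the intersection form $\{\cdot,\cdot\}$ on $H_1(M,\Rset)$, and hence its restriction to $W_a$. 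Combining these two facts with the definition of $Sp(W_a)$ as the group of $\Gamma$-equivariant symplectic automorphisms of $W_a$, we conclude $A|_{W_a}\in Sp(W_a)$.

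I expect no serious obstacle: the only point that merits an explicit sentence is the observation that the action on isomorphism classes of irreducible representations factors through $\textrm{Out}(\Gamma)$, which is the reason the first assertion holds for $\textrm{Aff}_*(M)$ (larger than $\textrm{Aff}_{**}(M)$) while the second does not. Everything else is a direct application of the definitions and of the Proposition above.
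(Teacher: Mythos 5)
Your proof is correct and follows essentially the same route as the paper: the first assertion is exactly the paper's observation that $\textrm{Aff}_*(M)$ acts trivially on ${\mathcal Irr}_{\Kset}(\Gamma)$ (by definition via $\textrm{Out}(\Gamma)$) combined with the preceding Proposition, and the second is the paper's remark that an element of $\textrm{Aff}_{**}(M)$ commutes with $\Gamma$ and, being orientation-preserving, preserves the symplectic form. Your extra sentence explaining why the action on isomorphism classes factors through $\textrm{Out}(\Gamma)$ is a harmless elaboration of what the paper already built into the definition.
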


\begin{proof}
Indeed, $\textrm{Aff}_*(M)$ acts trivially on ${\mathcal Irr}_{\Kset} (\Gamma)$, therefore any $A\in\textrm{Aff}_*(M)$ preserves each $W_a$. When $A\in\textrm{Aff}_{**}(M)$,  $A$ commutes with $\Gamma$ and its restriction to $W_a$ is an automorphism of $\Gamma$-module. As $A$ is orientation-preserving, the symplectic form is also preserved.
\end{proof}

\begin{remark}
In Section 5.4 below, we discuss an example where $\textrm{Aff}(M)$ acts in a nontrivial way on ${\mathcal Irr}_{\Rset} (\Gamma)$. However, we do not know an example where $\textrm{Aff}(M)$ acts in a nontrivial way on the isotypic components $W_a$ of $H_1^{(0)} (M,\Rset)$.
\end{remark}

%\begin{remark}
%We don't know of any reduced origami $M$ such that the action of $\textrm{Aff}(M)$ on  ${\mathcal Irr}_{\Kset}(\Gamma)$ is non trivial! For this to happen, it is necessary (but not a priori sufficient) that  $\textrm{Aff}(M)$ has non trivial image in the group $\textrm{Out}(\Gamma):= \textrm{Aut}(\Gamma)/\Gamma$ of outer automorphisms of $\Gamma$.
%\end{remark}

\subsection {Definition of  the Kontsevich-Zorich cocycle in presence of automorphisms}

The definition of the Kontsevich-Zorich cocycle over an origami with a nontrivial automorphism group requires a small adjustment with respect to the usual definition (in, e.g., \cite{Fo}).

\smallskip
Let $\pi: M \rightarrow \Tset^2$ be a reduced origami. Any $g \in GL(2,\Rset)$ induces a diffeomorphism $\bar g$ from $\Tset^2$ to the torus $\Rset^2 / g(\Zset^2)$. When $g \in GL(2,\Zset)$, the composition $\bar g \circ \pi$ is an origami, which is isomorphic to $\pi$ iff $g \in GL(M)$. In this case, an isomorphism from $\bar g \circ \pi$ to $\pi$ is just an affine map of $M$ lifting $\bar g$, but such a map is not unique when the automorphism group
$\Gamma := {\rm Aut} (M)$ is not trivial.

\medskip
One way around this problem is to {\it mark}  origamis in the following way. We choose a point $p^*$ in the fiber $\Sigma^*= \pi^{-1}(0)$, and a  rightward horizontal separatrix $\mathcal G_0$ from $p^*$. The number of such
 separatrices is the ramification index $\kappa$ of $\pi$ at $p^*$. Denote by $\widetilde {SL(2,\Rset)}$ the unique connected group (up to isomorphism) which is a covering of degree $\kappa$ of $SL(2,\Rset)$, and write $g$ for the
 image in $SL(2,\Rset)$ of an element $\tilde g \in \widetilde {SL(2,\Rset)}$ by the canonical projection. There is a unique continuous map $\tilde g \mapsto \mathcal G(\tilde g)$ such that
\begin{enumerate}
\item $\mathcal G({\bf 1})= \mathcal G_0$;
\item for any $\tilde g \in \widetilde {SL(2,\Rset)}$,
 $\mathcal G(\tilde g)$ is a rightward horizontal separatrix from $p^*$ for  $\bar g \circ \pi$.
 \end{enumerate}

\smallskip

 Denote by $\widetilde {SL_0(M)}$ the inverse image of $SL(M)$ in
 $\widetilde {SL(2,\Rset)}$ and by $\widetilde {SL(M)}$ the set of $\tilde g \in \widetilde {SL_0(M)}$ such that
there exists an isomorphism between the origamis $\bar g \circ \pi$ and $\pi$ sending
 $\mathcal G(\tilde g)$ on $\mathcal G_0$.
Observe that such an isomorphism is unique, because an automorphism of a translation surface  with a fixed point which
is not a ramification point must be the identity.

 The subgroup $\widetilde {SL(M)}$ has finite index
in $ \widetilde {SL_0(M)}$. Thus the natural map from $\widetilde {SL(2,\Rset)} / \widetilde {SL(M)}$ to
$SL(2,\Rset) / SL(M) \simeq \widetilde {SL(2,\Rset)} / \widetilde {SL_0(M)}$ is a finite covering,
and there is a canonical isomorphism between  $ \widetilde {SL(M)}$ and a subgroup of finite index of the affine group
$ \textrm{Aff}(M)$.

Recall that  the Teichm\"uller flow on $SL(2,\Rset) / SL(M)$ is  the left multiplication by the one-parameter diagonal subgroup $g^t:= \textrm{diag}(e^t,e^{-t})$ of $SL(2,\Rset)$. There is an unique lift of this flow
 (still called the   Teichm\"uller flow) to
$\widetilde {SL(2,\Rset)} / \widetilde {SL(M)}$.

\begin{definition}
The Kontsevich-Zorich (KZ) cocycle is  the cocycle over the Teichm\"uller flow on
$\widetilde {SL(2,\Rset)} / \widetilde {SL(M)}$ obtained by taking the quotient of  the trivial cocycle on
$\widetilde {SL(2,\Rset)} \times H_1(M,\Rset)$ by the action of $ \widetilde {SL(M)}$ (this action  on the second
factor being defined through the  embedding in $ \textrm{Aff}(M)$).
\end{definition}

\subsection{Lyapunov exponents of the KZ-cocycle}

We recall a special case of a standard result, consequence of the classical Hopf argument (\cite{W}).

\begin{lemma}
Let $X$ be any {\bf connected}  covering space of $SL(2,\Rset)/ SL(M)$ of finite degree. Then the lift to $X$ of the Teichm\"uller flow is ergodic w.r.t. the lift of the Haar measure.
\end{lemma}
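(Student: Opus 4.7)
The plan is to reduce to the classical Hopf argument. Since $X$ is a connected finite-degree covering of $SL(2,\Rset)/SL(M)$, I would first observe that it inherits a locally $SL(2,\Rset)$-homogeneous structure: an atlas whose transition maps are local left translations in $SL(2,\Rset)$. Consequently, the lift of the Teichm\"uller flow is locally the left multiplication by $g^t = \mathrm{diag}(e^t, e^{-t})$, and the horocycle subgroups $H^+ = \{h^+_s\}$ and $H^- = \{h^-_s\}$ of $SL(2,\Rset)$ act locally on $X$; the identity $g^t h^\pm_s g^{-t} = h^\pm_{e^{\pm 2t}s}$ shows that their local orbits are exponentially contracted, resp.\ expanded, by the flow. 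Because $M$ is an origami, $SL(M)$ is commensurable with $SL(2,\Zset)$ and hence a lattice in $SL(2,\Rset)$; since the covering has finite degree, the lift $\mu$ of Haar measure is a finite invariant measure for the flow on $X$.

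I would then run the Hopf argument on an arbitrary $g^t$-invariant $f \in L^2(X,\mu)$. Approximating $f$ in $L^2$ by continuous compactly supported functions $\tilde f$, Birkhoff's ergodic theorem gives forward and backward averages $\tilde f^\pm(x) = \lim_{T \to \infty} \tfrac{1}{T}\int_0^T \tilde f(g^{\pm t}x)\,dt$ which exist $\mu$-a.e.\ and coincide with the conditional expectation of $\tilde f$ on the $\sigma$-algebra of invariant sets. Uniform continuity of $\tilde f$ on compact sets together with exponential contraction of $H^+$-orbits by $g^t$ forces $\tilde f^+$ to be essentially constant along local $H^+$-orbits; symmetrically $\tilde f^-$ is essentially constant along local $H^-$-orbits. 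Passing to the $L^2$ limit, the conditional expectation of $f$ itself inherits these two invariance properties.

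To close the argument, Haar measure decomposes locally as a product in the $(H^+, g^t, H^-)$ coordinates on $SL(2,\Rset)$, so saturating a small open set by its local stable and unstable leaves exhausts its full local measure. This forces the conditional expectation of $f$ to be locally essentially constant, and the connectedness assumption on $X$ then propagates local constancy into global constancy, yielding ergodicity. I expect the only technical wrinkle to be the standard absolute-continuity bookkeeping for the stable and unstable foliations against the local product coordinates, but this is automatic here since both foliations are smooth and the measure is Haar, placing the argument squarely within the classical Hopf framework as used in \cite{W}.
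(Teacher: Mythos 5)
Your proposal is correct and is essentially the paper's own argument written out in full: the paper's proof is a one-line appeal to the classical Hopf argument (hyperbolicity of the Teichm\"uller flow makes every ergodic component open, and connectedness of $X$ then forces a single component), citing \cite{W} for the details you supply. The steps you spell out --- contraction/expansion of the horocycle directions under conjugation by $g^t$, constancy of Birkhoff averages along stable and unstable leaves, and the local product structure of Haar measure --- are exactly the standard implementation of that one line, so there is nothing to add.
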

\begin{proof}
Indeed, from the hyperbolicity of the Teichm\"uller flow,  any ergodic component for the lift of the Teichm\"uller flow is open.
\end{proof}

This holds in particular for the covering $\widetilde {SL(2,\Rset)} / \widetilde {SL(M)}$ introduced in the last section.
 We  can thus apply  Oseledets multiplicative ergodic theorem to the KZ-cocycle and obtain a.e. constant Lyapunov exponents. The vector bundle of the KZ-cocycle splits into a $2$-dimensional subbundle associated to $H_1^{st} (M, \Rset)$,
giving rise to the extremal exponents $\pm 1$, and a complementary subbundle associated to $ H_1^{(0)} (M,\Rset)$,
which corresponds to the exponents in the open interval $(-1,+1)$.

\smallskip

Although the covering space $\widetilde {SL(2,\Rset)} / \widetilde {SL(M)}$ was needed to {\bf define} the KZ-cocycle, one can {\bf compute} the Lyapunov exponents directly on $SL(2,\Rset)/ SL(M)$. Indeed, choose an open neighborhood $U_0$
of the identity in  $SL(2,\Rset)$ which is disjoint from its right translates by nontrivial elements of $SL(M)$. Let $U$ be the image of $U_0$ in $ SL(2,\Rset)/ SL(M)$. There is a full measure set $Z \subset SL(2,\Rset)/ SL(M)$ such that:
\begin{enumerate}
\item all points in $Z$ are recurrent for the Teichm\"uller flow;
\item any point in $\widetilde {SL(2,\Rset)} / \widetilde {SL(M)}$ above $Z$ is regular for Oseledets theorem.
\end{enumerate}
Let now $x \in U \cap Z$, and let $\wt x \in \widetilde {SL(2,\Rset)} / \widetilde {SL(M)}$ be any point above $x$; as $x$ is recurrent, there exists a sequence $(t_n)_{n\in\Zset}$ going to $\pm\infty$ as $n$ goes to $\pm\infty$ such that $g^{t_n}.x \in U$.
Write $x_0$ for the inverse image of $x$ in $U_0$. We can write in a unique way $ g^{t_n}.x_0 = y_n. h_n$, with $y_n \in U_0$ and $h_n \in SL(M)$. Let $A_n$ be {\bf any} affine map with derivative $h_n$. Let $v \in H_1(M,\Rset)$ be a unit vector associated to the exponent $\theta$ above the orbit of $\wt x$. We have then
\begin{equation}
 \theta = \lim_{n \rightarrow \pm \infty} \frac {\log ||A_n(v)||}{\log ||A_n||}.
 \end{equation}
Observe that, if we choose  a $\Gamma$-invariant norm on $H_1(M,\Rset)$, the right-hand term does not depend on $A_n$ but only on its derivative $h_n$.

\smallskip
On the other hand, writing $ \widetilde {SL_*(M)}$ for the subgroup of $\widetilde {SL(M)}$ formed of the elements
whose  image in $SL(M)$ belongs to $SL_*(M)$, we may also consider the lift of the Teichm\"uller flow and of
the KZ-cocycle to the covering space $\widetilde {SL(2,\Rset)} / \widetilde {SL_*(M)}$. From the lemma, the flow is
still ergodic. The Lyapunov exponents produced by Oseledets theorem at this level are the same than for
$\widetilde {SL(2,\Rset)} / \widetilde {SL(M)}$.

\smallskip

However, over $\widetilde {SL(2,\Rset)} / \widetilde {SL_*(M)}$,
the vector bundle associated to $ H_1^{(0)} (M,\Rset)$ splits into subbundles associated to the isotypic components $W_a$,
$a \in {\mathcal Irr}_{\Rset} (\Gamma)$, each being invariant under the KZ-cocycle.
We can apply  Oseledets theorem to each of these subbundles and conclude that the list of non-extremal Lyapunov
exponents (counted with multiplicity) for the KZ-cocycle splits into sublists, one for each
$a \in {\mathcal Irr}_{\Rset} (\Gamma)$. Observe that the restriction of the KZ-cocycle to each subbundle is still
 symplectic, so the Lyapunov exponents in each sublist are still symmetric with respect to $0$.

\begin{proposition}
Let $A \in \textrm{Aff}(M)$, $a \in {\mathcal Irr}_{\Rset} (\Gamma)$. The Lyapunov exponents
(counted with multiplicity) associated to $W_a$ and $W_{A.a}$ are identical.
\end{proposition}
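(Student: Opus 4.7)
The plan is to realize $A$ as a measure-preserving self-map of the cover $\widetilde{SL(2,\Rset)}/\widetilde{SL_*(M)}$ that commutes with the Teichm\"uller flow and conjugates the restriction of the KZ-cocycle to the subbundle with fiber $W_a$ onto its restriction to the subbundle with fiber $W_{A.a}$. The invariance of the Lyapunov spectrum under such a measurable conjugacy, together with the ergodicity lemma above, then forces the two spectra to agree. I would first reduce to the case $A \in \widetilde{SL(M)}$, viewed via the canonical embedding as a finite index subgroup of $\textrm{Aff}(M)$: since the action of $\textrm{Aff}(M)$ on $\mathcal{Irr}_\Rset(\Gamma)$ factors through the finite quotient $\textrm{Aff}(M)/\textrm{Aff}_*(M)$, every $\textrm{Aff}(M)$-orbit in $\mathcal{Irr}_\Rset(\Gamma)$ is a finite union of $\widetilde{SL(M)}$-orbits, and the general statement follows by chaining the desired equality along these orbits.

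For $A \in \widetilde{SL(M)}$, fix a lift $\tilde A$. The subgroup $\widetilde{SL_*(M)}$ is normal in $\widetilde{SL(M)}$, being the preimage of the normal subgroup $SL_*(M) \triangleleft SL(M)$. Hence right translation by $\tilde A^{-1}$ descends to a measure-preserving diffeomorphism
\[ \Phi_A : \widetilde{SL(2,\Rset)}/\widetilde{SL_*(M)} \to \widetilde{SL(2,\Rset)}/\widetilde{SL_*(M)}, \quad [\tilde g] \mapsto [\tilde g \tilde A^{-1}], \]
which commutes with the Teichm\"uller flow (left multiplication by $g^t$). The map $(\tilde g, v) \mapsto (\tilde g \tilde A^{-1}, A v)$ on the trivial bundle $\widetilde{SL(2,\Rset)} \times H_1(M,\Rset)$ is compatible with the defining $\widetilde{SL_*(M)}$-action $(\tilde g, v) \sim (\tilde g \tilde s, \tilde s^{-1} v)$, again by the normality of $\widetilde{SL_*(M)}$, and so descends to a bundle lift $\widehat\Phi_A$ of $\Phi_A$ on the KZ vector bundle. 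By the proposition immediately preceding the statement, $\widehat\Phi_A$ sends the subbundle with fiber $W_a$ isomorphically onto the subbundle with fiber $A(W_a) = W_{A.a}$ and intertwines the corresponding restricted cocycles.

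Since $\Phi_A$ preserves the Haar measure and commutes with the flow while $\widehat\Phi_A$ conjugates the restricted cocycles, it carries Oseledets-regular points for the cocycle on $\widetilde{W}_a$ to Oseledets-regular points for the cocycle on $\widetilde{W}_{A.a}$, with matching Lyapunov decompositions fiber by fiber. By the ergodicity lemma applied to the connected cover $\widetilde{SL(2,\Rset)}/\widetilde{SL_*(M)}$, the two a.e.\ constant Lyapunov spectra therefore coincide. The main obstacle I expect is verifying cleanly the descent of $\widehat\Phi_A$ to the quotient bundle, which hinges on the normality of $\widetilde{SL_*(M)}$ in $\widetilde{SL(M)}$; a secondary bookkeeping subtlety is the preliminary reduction to $\widetilde{SL(M)}$, which must be arranged so as to respect the isotypic decomposition.
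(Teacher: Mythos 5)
Your central construction is sound where it applies: for $A$ in the image of $\widetilde{SL(M)}$, right translation $[\tilde g]\mapsto[\tilde g\tilde A^{-1}]$ is indeed a well-defined, flow-commuting, measure-preserving map of $\widetilde{SL(2,\Rset)}/\widetilde{SL_*(M)}$ (your normality claim is correct: $\textrm{Aff}_*(M)$ is an intersection of two normal subgroups of $\textrm{Aff}(M)$, so $SL_*(M)\lhd SL(M)$ and $\widetilde{SL_*(M)}\lhd\widetilde{SL(M)}$), the bundle lift descends, and it carries the subbundle with fiber $W_a$ onto the one with fiber $W_{A.a}$ while conjugating the cocycles; with the ergodicity lemma this yields equality of the spectra. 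This is in substance the same mechanism as the paper's proof, which runs it pointwise instead: for a.e.\ $x$ one finds a return time $T$ with $g^T.x_0=y_0.\bar h$ and $\bar h\in h\,SL_*(M)$, and transports Oseledets vectors by an affine map $\bar A$ with derivative $\bar h$ using formula (4.1).

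The genuine gap is the preliminary reduction to $A\in\widetilde{SL(M)}$, for two reasons. First, the image of $\widetilde{SL(M)}$ in $SL(M)$ consists only of derivatives of affine maps fixing the marked point $p^*$ (and a separatrix there); neither this subgroup nor its product with $SL_*(M)$ need be all of $SL(M)$, so the class of a general $A$ in $\textrm{Aff}(M)/\textrm{Aff}_*(M)$ --- which is what determines the permutation $a\mapsto A.a$ --- need not be realized by any element of $\widetilde{SL(M)}$. Second, even granting that each $\textrm{Aff}(M)$-orbit in ${\mathcal Irr}_{\Rset}(\Gamma)$ is a finite union of $\widetilde{SL(M)}$-orbits (trivially true, since the set is finite), ``chaining'' only shows the spectrum is constant on each $\widetilde{SL(M)}$-orbit; it provides no comparison between two distinct $\widetilde{SL(M)}$-orbits lying inside a single $\textrm{Aff}(M)$-orbit, which is precisely the case that remains to be treated. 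The paper sidesteps the reduction entirely: it takes an arbitrary $A$ with derivative $h\in SL(M)$ and uses ergodicity of the flow on $SL(2,\Rset)/SL_*(M)$ to produce, for almost every base point, a return whose holonomy $\bar h$ lies in the coset $h\,SL_*(M)$; any affine map with derivative $\bar h$ acts on ${\mathcal Irr}_{\Rset}(\Gamma)$ exactly as $A$ does, and that suffices. To repair your argument you should either adopt this return-time device for general $A$, or prove that $\widetilde{SL(M)}$ surjects onto $SL(M)/SL_*(M)$, which is not automatic and would require additional justification.
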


\begin{proof} Let $U_0$, $U$, $Z$ be as above. Let $h \in SL(M)$ be the derivative of $A$. From the lemma, the lift
of the Teichm\"uller flow to $SL(2,\Rset)/ SL_*(M)$ is ergodic. Therefore, for almost all $x \in U \cap Z$, there
exists $T>0$ such that $g^T.x_0 = y_0.\bar h$ ,
where $x_0$ is the preimage of $x$ in $U_0$,  $y_0 \in U_0$ and $h^{-1}\bar h \in SL_*(M)$. Any
$\bar A \in \textrm{Aff}(M)$ with derivative $\bar h$ acts
on ${\mathcal Irr}_{\Rset} (\Gamma)$ in the same way as $A$, in particular we have $A.a = \bar A.a$.
Let
$\wt x_*, \wt y_*$ be the images of $x_0,y_0$ in $\widetilde {SL(2,\Rset)} / \widetilde {SL_*(M)}$
(we take $U_0$ small enough to lift it as a neighborhood of the identity in $\widetilde {SL(2,\Rset)}$).
Let $v \in W_a$ be a unit vector associated to some exponent $\theta$ above the orbit of $\wt x_*$. Then,
from the formula (4.1) above,  it follows that $\bar A.v \in W_{A.a}$ is a vector
associated to the same exponent $\theta$ above the orbit of $\wt y_*$. This proves that any exponent in
$W_a$ is also an exponent in $W_{A.a}$, and the statement of the proposition follows immediately.
\end{proof}

\subsection{The Lyapunov exponents associated to an isotypic component $W_a$ of $ H_1^{(0)} (M,\Rset)$}

In this section, we fix some isotypic component $W_a$ of $ H_1^{(0)} (M,\Rset)$. We consider the restriction of the
KZ-cocycle to the associated subbundle over $\widetilde {SL(2,\Rset)} / \widetilde {SL_*(M)}$. For a Lyapunov exponent
$\theta$ of this restriction, and an Oseledets  regular point $x \in \widetilde {SL(2,\Rset)} / \widetilde {SL_*(M)}$,
we denote by $W_a(\theta,x)$ the subspace of $W_a$ associated to the exponent $\theta$ over the orbit of $x$.

\begin{proposition}
Every subspace $W_a(\theta,x)$ is invariant under the action of $\Gamma=  \textrm{Aut}(M)$.
\end{proposition}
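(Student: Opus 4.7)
The strategy is direct: I would show that $\gamma v \in W_a(\theta, x)$ for any $\gamma \in \Gamma$ and any $v \in W_a(\theta, x)$. Since $W_a$ is by definition the isotypic component of the $\Gamma$-module $H_1^{(0)}(M,\Rset)$ of type $a$, it is automatically $\Gamma$-invariant, so $\gamma v$ already lies in $W_a$. What remains is to show that $\gamma v$ lies in the piece of the Oseledets decomposition at $x$ corresponding to the exponent $\theta$, i.e., that $\gamma v$ has Lyapunov exponent $\theta$ in both forward and backward time.

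The main mechanism will be the formula (4.1) combined with the crucial remark noted just after it: if one fixes a $\Gamma$-invariant norm $\|\cdot\|$ on $H_1(M,\Rset)$ (obtained by averaging any norm over the finite group $\Gamma$), then the value of $\theta$ produced by that formula depends only on the sequence of derivatives $h_n \in SL(M)$, and not on the particular affine lifts $A_n \in \textrm{Aff}(M)$ chosen to represent them. For $\gamma \in \Gamma$, the composition $A_n \circ \gamma$ is again an affine map with derivative $h_n$, because $\gamma$ has trivial derivative. Hence I can write
\[
\lim_{n \to \pm\infty} \frac{\log\|A_n(\gamma v)\|}{\log\|A_n\|} \;=\; \lim_{n \to \pm\infty} \frac{\log\|(A_n\gamma)(v)\|}{\log\|A_n\gamma\|} \;=\; \theta,
\]
where the first equality uses that $\log\|A_n\|$ and $\log\|A_n\gamma\|$ differ by $O(1)$ (since $\gamma$ acts by a bounded operator) while both tend to $+\infty$, and the second comes from applying (4.1) with the affine lift $A_n\gamma$ in place of $A_n$. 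Thus $\gamma v$ has Lyapunov exponent $\theta$ in both time directions, and so $\gamma v \in W_a(\theta,x)$ by the standard characterization of the Oseledets subspaces at a regular point.

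The hard part, if any, is making the independence of (4.1) from the choice of affine lift fully rigorous at the level needed here: two affine lifts of the same $h_n$ differ by precomposition with an element of $\Gamma$, so with a $\Gamma$-invariant norm the norms of their images of $v$ agree up to a bounded multiplicative constant, which is absorbed once one divides by $\log\|A_n\| \to \infty$. This was essentially already granted in the discussion following formula (4.1), and once it is in hand the $\Gamma$-invariance of each Oseledets summand $W_a(\theta,x)$ is just the one-line substitution $v \leadsto \gamma v$, $A_n \leadsto A_n\gamma$ displayed above.
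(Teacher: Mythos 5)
Your proof is correct and follows essentially the same route as the paper's: both rest on the characterization of $W_a(\theta,x)$ by the growth rate $\lim_n \log\|A_n(v)\|/\log\|A_n\|$ for a sequence of affine maps, together with a $\Gamma$-invariant norm, so that replacing $v$ by $\gamma v$ does not change the limit. The only (cosmetic) difference is that the paper takes the $A_n$ in $\textrm{Aff}_{**}(M)$, so that they literally commute with $\Gamma$ and $\|A_n(\gamma v)\| = \|\gamma(A_n(v))\| = \|A_n(v)\|$, whereas you absorb $\gamma$ into the choice of affine lift; to make your ``bounded multiplicative constant'' step airtight, observe that $A_n\circ\gamma = (A_n\gamma A_n^{-1})\circ A_n$ with $A_n\gamma A_n^{-1}\in\Gamma$ by normality of $\textrm{Aut}(M)$ in $\textrm{Aff}(M)$, so with the invariant norm the two norms agree exactly.
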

\begin{proof}
Proceeding as in the last section, one can find a sequence $(A_n)$ in $\textrm{Aff}_{**}(M)$ (depending on $x$!) such that
\begin{equation}
 W_a(\theta,x) -\{0\} = \{v\in W_a - \{0\}, \lim_{n \rightarrow \pm \infty} \frac {\log ||A_n(v)||}{\log ||A_n||} = \theta\}.\end{equation}
In this formula, we can choose a $\Gamma$-invariant norm on $W_a$. Let $g \in \Gamma$.  For all $v \in W_a$, $g \in \Gamma$, we have, as  $(A_n) \in \textrm{Aff}_{**}(M)$
$$ ||A_n \circ g(v) || =  || g \circ A_n(v)|| =  || A_n(v)|| .$$
In view of the characterization of $W_a(\theta,x)$, the proof of the proposition is complete.
\end{proof}

\begin{corollary}
Every subspace $W_a(\theta,x)$ is isomorphic as a  $\Rset(\Gamma)$-module to the direct sum of a finite number of the
irreducible $\Rset(\Gamma)$-module $V_a$. In particular, the multiplicity of every Lyapunov exponent in $W_a$
is a multiple of   $\dim_{\Rset}V_a$.
\end{corollary}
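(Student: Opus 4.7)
The plan is to combine the previous proposition (which gives $\Gamma$-invariance of $W_a(\theta,x)$) with the standard fact from semisimple representation theory that every submodule of an isotypic module is itself isotypic of the same type.

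First, by the preceding proposition, $W_a(\theta,x)$ is a $\Gamma$-submodule of $W_a$. Since $W_a$ is, by definition, the isotypic component of $H_1^{(0)}(M,\Rset)$ associated with the irreducible $\Rset(\Gamma)$-module $V_a$, it is isomorphic to $V_a^{\ell_a}$. Any $\Gamma$-submodule of a semisimple module is semisimple, so $W_a(\theta,x)$ decomposes as a direct sum of irreducible $\Rset(\Gamma)$-modules; and because each irreducible constituent embeds into $W_a \cong V_a^{\ell_a}$, each such constituent must be isomorphic to $V_a$ (by Schur, any nonzero $\Gamma$-map from an irreducible $V \not\cong V_a$ into $V_a^{\ell_a}$ would compose to a nonzero map $V \to V_a$, contradicting irreducibility with distinct types). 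Hence $W_a(\theta,x) \cong V_a^{k}$ for some nonnegative integer $k$.

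Taking $\Rset$-dimensions gives $\dim_{\Rset} W_a(\theta,x) = k \dim_{\Rset} V_a$, which is exactly the statement that the multiplicity of $\theta$ in $W_a$ is a multiple of $\dim_{\Rset} V_a$.

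There is essentially no obstacle here: the only nontrivial input is the $\Gamma$-invariance of the Oseledets subspaces, which has just been proved; the rest is the elementary structure theory of isotypic components. One small point to be careful about is to use the characterization of $V_a$ as the unique (up to isomorphism) irreducible $\Rset(\Gamma)$-submodule of $W_a$, which justifies that every irreducible constituent of $W_a(\theta,x)$ is of type $a$.
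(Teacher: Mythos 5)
Your proof is correct and is exactly the argument the paper intends: the paper states this corollary without proof, as it follows immediately from the preceding proposition ($\Gamma$-invariance of $W_a(\theta,x)$) together with the standard fact that any submodule of an isotypic $\Rset(\Gamma)$-module is a direct sum of copies of the same irreducible. You have simply spelled out those standard semisimplicity/Schur details, and the dimension count at the end is the right way to conclude the multiplicity statement.
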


Depending on the type (real, complex, quaternionic) of $a \in {\mathcal Irr}_{\Rset} (\Gamma)$, we may sometimes say more
about the exponents using Corollary 4.4 and the discussion about $Sp(W_a)$ in Section 3.7.

\begin{itemize}
\item When $a$ is {\bf real}, there is nothing to say beyond what is true for general symplectic cocycles: if
$\theta$ is a Lyapunov exponent for $W_a$, then $-\theta$ is also an exponent, with the same multiplicity than $\theta$.
\item Assume that $a$ is {\bf complex} or {\bf quaternionic}.  From Propositions 3.17 and 3.18, there exists nonnegative integers $p,q$ with $p+q = \ell_a$
such that $Sp(W_a)$ is isomorphic to $U_{\Kset}(p,q)$, with $\Kset= \Cset$ or $\Hset$.
\begin{proposition}
The multiplicity of the exponent $0$ in $W_a$ is at least \linebreak $|q-p|\dim_{\Rset} V_a$.
\end{proposition}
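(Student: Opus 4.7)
The plan is to exploit the fact that $\textrm{Aff}_{**}(M)$ acts on $W_a$ through $Sp(W_a) \simeq U_{\Kset}(p,q)$, so the KZ-cocycle restricted to $W_a$ preserves a non-degenerate Hermitian form $H$ of signature $(p,q)$ on the coordinate space $\Kset^{\ell_a}$ (via the isomorphism $W_a \simeq V_a^{\ell_a}$ of Propositions 3.17 and 3.18). The key observation is that $W_a$ carries a natural right $\Kset$-module structure (coming from $\textrm{End}_\Gamma(W_a) \simeq M(\ell_a, \Kset)$), and each Oseledets subspace $W_a(\theta,x)$, being a $\Gamma$-submodule by the preceding Proposition, is automatically a $\Kset$-submodule, isomorphic to $V_a^{k(\theta)}$ for some integer $k(\theta) \geq 0$ with $\sum_\theta k(\theta) = \ell_a$.

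The main step is the classical Oseledets orthogonality argument. Let $x$ be an Oseledets-regular point and fix a $\Gamma$-invariant norm on $W_a$. Pick a sequence $(A_n) \in \textrm{Aff}_{**}(M)$ realizing the exponents on the orbit of $x$ in the sense of formula (4.2), so $A_n$ acts on the coordinate space by elements of $U_{\Kset}(p,q)$ and thus preserves $H$. For $v \in W_a(\theta,x)$, $w \in W_a(\theta',x)$ one has
\begin{equation*}
|H(v,w)| = |H(A_n v, A_n w)| \leq C\, \|A_n v\|\, \|A_n w\|,
\end{equation*}
so as $n \to +\infty$ the right-hand side grows like $e^{(\theta+\theta')n}$, forcing $H(v,w)=0$ whenever $\theta+\theta'<0$; applying the same bound as $n\to -\infty$ handles $\theta+\theta'>0$. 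Hence $H$ pairs $W_a(\theta,x)$ trivially with $W_a(\theta',x)$ unless $\theta+\theta'=0$, and non-degeneracy of $H$ then forces $k(\theta)=k(-\theta)$ and makes $H|_{W_a(0,x)}$ non-degenerate.

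The conclusion is short. The subspace $W_+ := \bigoplus_{\theta>0} W_a(\theta,x)$ is a totally $H$-isotropic $\Kset$-subspace of $\Kset^{\ell_a}$, so its $\Kset$-dimension is at most the Witt index $\min(p,q)$ of $H$. Since $\ell_a = k(0) + 2\sum_{\theta>0}k(\theta)$, we obtain
\begin{equation*}
k(0) \;\geq\; \ell_a - 2\min(p,q) \;=\; p+q-2\min(p,q) \;=\; |p-q|.
\end{equation*}
As $W_a(0,x) \simeq V_a^{k(0)}$ as a $\Gamma$-module, its real dimension is $k(0)\dim_\Rset V_a \geq |q-p|\dim_\Rset V_a$, which is the desired lower bound for the multiplicity of the zero exponent in $W_a$.

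The main obstacle I anticipate is purely technical rather than conceptual: one must be careful that the $\Kset$-structure on $W_a$ that underlies the identification $Sp(W_a) \simeq U_{\Kset}(p,q)$ is the same as the one used to say that $\Gamma$-submodules are $\Kset$-submodules (otherwise the isotropy count is in the wrong category, and the bound $\min(p,q)$ does not apply). This follows from Propositions 3.17 and 3.18, which produce a single $\iota : V_a^{\ell_a} \to W_a$ intertwining both the $\Gamma$-action and the Hermitian form, but the bookkeeping should be made explicit in writing up the argument.
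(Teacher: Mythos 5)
Your proof is correct and follows essentially the same route as the paper's: both arguments rest on the observation that the stable and unstable Oseledets subspaces are totally isotropic for the $\textrm{Aff}_{**}(M)$-invariant indefinite form coming from Propositions 3.17 and 3.18, and then invoke the Witt-index bound $\min(p,q)$ to force the zero-exponent subspace to have multiplicity at least $|p-q|$. The only difference is bookkeeping: the paper works with the real symmetric bilinear form $B$ of signature $(p\dim_{\Rset}V_a,\, q\dim_{\Rset}V_a)$ and bounds real dimensions directly, which sidesteps your (correctly handled, but extra) verification that the Oseledets subspaces are $\Kset$-submodules of $V_a^{\ell_a}$.
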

\begin{proof}
From Propositions 3.17 and 3.18, there is a nondegenerate symmetric bilinear form $B$ of  signature $(p\dim_{\Rset} V_a,q\dim_{\Rset} V_a)$ which is preserved by the elements of $Sp(W_a)$, and in particular by the elements $A_n$  associated in formula (4.2) to a regular point $x$. Let $W_a^s(x)$ be the subspace of $W_a$ associated to the negative Lyapunov exponents. For $v,w \in W_a^s(x)$, we have
$$  B(v,w) = \lim_{n \rightarrow +\infty} B(A_n.v,A_n.w) =0,$$
hence  $W_a^s(x)$ is an isotropic subspace for $B$. But the maximal dimension of an isotropic subspace for a nondegenerate symmetric bilinear form  of  signature $(P,Q)$ is $\min (P,Q)$. Therefore the dimension of  $W_a^s(x)$ is
$\min (p,q)  \dim_{\Rset} V_a$ at most. Similarly, the dimension of the subspace of $W_a$ associated to the positive Lyapunov exponents is at most $\min (p,q) \dim_{\Rset} V_a$. The proposition follows.
\end{proof}

\end{itemize}

\section{Examples}

The main goal of this section is the illustration of our general discussion  by a few concrete examples of regular and quasiregular origamis. These examples are based on the representation theory of some classical finite groups, for which we refer to \cite{FH}.

\subsection{A family of quasiregular origamis}\label{ss.qro}

Let $n\geq 2$ be an integer, and let $G$ be the group consisting of all permutations of the set $\{1,\dots,2n\}$ which respect the natural partition into even and odd numbers. Let $g_r:=(1,2,\dots,2n)$ and $g_u:=(24)$, so that $c=g_r^{-1} g_u^{-1} g_r g_u = (13)(24)$.

 Let $E_{2n}$ be the set of even numbers in $\{1,\dots,2n\}$, and $O_{2n}$ be  the set of odd numbers. Define $N=\{g\in G: g(E_{2n})=E_{2n}\}$, and $H:=\{g\in N: g|_{E_{2n}} = \textrm{id}|_{E_{2n}}\}$.

\begin{proposition} The following properties hold:
\begin{itemize}
\item $g_r$ and $g_u$ generate $G$;
\item $H\cap g_r H g_r^{-1} = \{\textrm{id}\}$;
\item $N$ is the normalizer of $H$ in $G$ and $N/H\simeq S_n$;
\item $N$ is a normal subgroup of $G$ of index $2$.
\end{itemize}
\end{proposition}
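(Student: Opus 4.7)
The plan is to exploit the identification $G \simeq (S_{E_{2n}} \times S_{O_{2n}}) \rtimes \Zset/2\Zset \simeq S_n \wr S_2$, where $S_{E_{2n}}$ (resp.\ $S_{O_{2n}}$) denotes the symmetric group on $E_{2n}$ (resp.\ $O_{2n}$), viewed inside $G$ as the permutations fixing the other parity class pointwise, and the $\Zset/2\Zset$-factor swaps the two blocks. In this presentation, $N$ is precisely the kernel of the homomorphism $G \to \Zset/2\Zset$ recording whether an element of $G$ preserves or swaps $E_{2n}$ and $O_{2n}$, so $N$ is normal of index $2$ (giving the fourth item immediately), and $H = \{\textrm{id}\} \times S_{O_{2n}}$.

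For generation, I would first note that $g_u = (24) \in N$, while $g_r$ sends $k$ to $k+1 \pmod{2n}$ and therefore swaps $E_{2n}$ and $O_{2n}$; in particular $g_r \notin N$. Iterated conjugates are computed via the standard formula $g_r^{\,k}(24)g_r^{-k} = (g_r^{\,k}(2), g_r^{\,k}(4)) = (2+k, 4+k) \pmod{2n}$. For even $k$ this yields all adjacent transpositions $(2i, 2i+2)$ on $E_{2n}$, which generate $S_{E_{2n}}$; for odd $k$ it yields all adjacent transpositions $(2i-1, 2i+1)$ on $O_{2n}$, which generate $S_{O_{2n}}$. Hence $\langle g_r, g_u \rangle \supseteq N$, and since $g_r \in G \setminus N$ we obtain $\langle g_r, g_u \rangle = N \sqcup g_r N = G$.

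The second item is immediate: $H$ consists of the permutations fixing $E_{2n}$ pointwise, and $g_r H g_r^{-1}$ consists of the permutations fixing $g_r(E_{2n}) = O_{2n}$ pointwise; any element of the intersection fixes all of $\{1,\dots,2n\}$ and is thus trivial. For the third item, I would establish the two inclusions for the normalizer by the same kind of computation: if $g \in N$ and $h \in H$, then $ghg^{-1}$ fixes $g(E_{2n}) = E_{2n}$ pointwise, so $gHg^{-1} \subseteq H$, with equality by cardinality; if $g \notin N$, then $gHg^{-1}$ fixes $g(E_{2n}) = O_{2n}$ pointwise, and for $n \geq 2$ this forces $gHg^{-1} \neq H$. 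Finally, using the product decomposition $N = S_{E_{2n}} \times S_{O_{2n}}$ and $H = \{\textrm{id}\} \times S_{O_{2n}}$, the quotient $N/H$ is canonically $S_{E_{2n}} \simeq S_n$.

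The only genuine computation is the cycle arithmetic in the first item, which I expect to be the main (but mild) obstacle; all the other claims drop out directly from the wreath-product description of $G$.
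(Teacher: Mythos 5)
Your proof is correct and follows essentially the same route as the paper's: the same conjugation computation $g_r^{k}(24)g_r^{-k}=(2+k,4+k)$ producing the adjacent transpositions of each parity class, and the same identification of $g_rHg_r^{-1}$ as the pointwise stabilizer of $O_{2n}$. The only cosmetic difference is in the normalizer step, where the paper invokes maximality of the index-two subgroup $N$ together with the non-normality of $H$, while you compute $gHg^{-1}$ directly for $g\notin N$; both are immediate.
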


\begin{proof} Observe that $g_r H g_r^{-1}=\{g\in G: g|_{O_{2n}}=\textrm{id}|_{O_{2n}}\}$, so that the second item is clear. Also,  $N$ is a subgroup of index 2 of $G$,  hence $N$ is normal, so that the fourth item follows. For the third item, note that $N$ normalizes $H$, and is a maximal nontrivial subgroup of $G$; as $H$ is not normal by the second item,  $N$ is the normalizer of $H$. Finally, for the first item, one notices that, for $-1 \leq i \leq 2n-4$, $g_r^i g_ug_r^{-i}$ is the transposition $(i+2,i+4)$. An easy induction on $n$ shows that $S_n$ is generated by the transpositions $(1,2),\dots,(n-1,n)$. Applying this separately to $E_{2n}$ and  $O_{2n}$, we obtain that the elements $g_r^i g_u g_r^{-i}, -1 \leq i \leq 2n-4$ generate $N$. Since $g_r\notin N$ and $N$ has index 2 in $G$, we conclude that $g_r$ and  $g_u$ generate $G$.
\end{proof}

This proposition says that the data $(G, H, g_r, g_u)$ determine an origami $M_n$ with automorphism group $Aut(M_n)\simeq N/H\simeq S_n$. Moreover, by Proposition \ref{p.qr}, the fact that $N$ is normal in $G$ and $G/N$ is Abelian implies that $M_n$ is quasiregular.

\begin{remark} For the sake of comparison of $M_n$ with the case of regular origamis associated to the group $S_n$, observe that the image $\overline{c}$ of the commutator $c$ of $g_r,g_u$ in the automorphism group $S_n\simeq N/H$ of $M_n$ is $\overline{c}=(1,2)$: by definition, the image $\overline{c}$ of $c$ is computed by looking at the action of $c$ on the even numbers in $\{1,\dots,2n\}$ (in this case it is just the transposition $(2,4)$) and then renormalizing the even numbers in $\{1,\dots,2n\}$ (by multiplication by $1/2$) in order to get a permutation of $S_n\simeq N/H$. In particular, $\overline{c}$ is not the commutator of a pair of elements of $S_n$ (as such commutators are necessarily even permutations).
\end{remark}

Let $\rho_{\lambda}$ be an irreducible representation of $S_n$. Since $N$ has index 2 in $G$ and $g_r\notin N$, by Theorem \ref{t.3-3}, the multiplicity $\ell_{\lambda}$ of $\rho_{\lambda}$ in $H_1^{(0)}(M_n,\mathbb{C})$ is given by

$$\ell_{\lambda}=\textrm{codim }Fix_{\lambda}(\overline{c})+\textrm{codim }Fix_{\lambda}(g_r\overline{c}g_r^{-1})$$

On the other hand, because
$$g_r c g_r^{-1}=\left\{\begin{array}{cc}(2,4)(1,3) & \textrm{if }n=2\\ (2,4)(3,5) & \textrm{if } n>2\end{array}\right.$$
we have that $g_r \overline{c} g_r^{-1}=(1,2)=\overline{c}$ (after considering the action only on even numbers and renormalizing by multiplication by $1/2$). Thus, the previous formula simplifies to
\begin{equation}\label{e.l-qro}
\ell_{\lambda} = 2\textrm{ codim }Fix_{\lambda}(\overline{c})
\end{equation}

\bigskip

In order to compute the right-hand side of this formula, we will briefly recall some aspects of the (very classical) representation theory of $S_n$ (along the lines of \cite{FH}).

A \emph{Young diagram} is a non-increasing sequence  $\lambda=(\lambda_1,\lambda_2,\dots)$ of nonnegative integers  with $\lambda_i=0$ for large $i$. Writing $|\lambda|:=\sum \lambda_i$, we arrange
$|\lambda|$ boxes  in a left-justified way, the first row consisting of $\lambda_1$ boxes, the second row consisting of $\lambda_2$ boxes, etc.
The {\it dual} Young diagram  $\lambda^*$ of $\lambda$ is obtained by exchanging the lines and columns of $\lambda$: $\lambda^*_i = \sup \{n, \lambda_n \geq i\}$.

A conjugacy class of $S_n$ is given by the order of the  cycles of its elements, so conjugacy classes are associated to Young diagrams  $\lambda$ with $|\lambda|=n$. Such Young diagrams are also associated to the  $\Cset$-irreducible representations  of $S_n$ (see \cite{FH}). All such representations are  defined over $\Qset$, in particular they are real.

\begin{example}\label{ex.1} For the simplest Young diagrams, we have:
\begin{itemize}
\item The list $\lambda=(n)$ corresponds to the \emph{trivial} representation $U$.
%\end{example}

%\begin{example}\label{ex.2}
\item The dual $(\underbrace{1,\dots,1}_{n}):=(1^n)=(n)^*$ of $(n)$ corresponds to
the \emph{alternating} (\emph{signature}) representation $U'$.
%\end{example}

%\begin{example}\label{ex.3}
\item The list $\lambda=(n-1,1)$ gives rise to the \emph{standard} representation $V^{n-1}=\{v=(a_1,\dots,a_n): \sum\limits_{j=1}^n a_j=0\}$ (such that $\mathbb{C}^n=U\oplus V^{n-1}$ is the usual \emph{permutation} representation of $S_n$).
%\end{example}

%\begin{example}\label{ex.4}
\item The dual $(2,1^{n-2})=(n-1,1)^*$ of $(n-1,1)$ is the representation $V\otimes U'$ obtained by taking the tensor product of the standard representation $V$ with the alternating representation $U'$.
\end{itemize}
\end{example}

\begin{remark} More generally, the representation associated to the dual $\lambda^*$ of $\lambda$ can be obtained by taking the tensor product $V_{\lambda}\otimes U'$ of the representation $V_{\lambda}$ corresponding to $\lambda$ with the signature representation $U'$.
\end{remark}

The dimension of the irreducible representation $V_{\lambda}$ can be computed with the aid of the \emph{hook-length formula}:
$$\textrm{dim } V_{\lambda} = \frac{n!}{\prod\limits_{i=1}^n H_i}$$
where $H_i$ is the hook length of the box of number $ i$, that is, the number of boxes to the right in the same row of $i$ plus the number of boxes below in the same column of $i$ plus one (for the box itself).

\bigskip

Coming back to our concrete example of quasi-regular origami $M_n$, since $\overline{c}$ is a transposition,  we get from \eqref{e.l-qro} that
\begin{equation}\label{e.2}
\ell_{\lambda} = \textrm{dim } V_{\lambda} - \chi_{\lambda}(\overline{c}).
\end{equation}

The value  $\chi_{\lambda}(\overline{c})$ is given by \emph{Frobenius formula} (specialized here  to the case of a transposition $\overline{c}$):

\begin{theorem}[Frobenius formula] One has
$$\chi_{\lambda}(\overline{c})=\textrm{dim} V_{\lambda}\cdot \frac{p_2(\lambda)}{|\lambda|(|\lambda|-1)}$$
where $p_2(\lambda):=\sum\limits_{i\geq 1} \lambda_i(\lambda_i-2i+1)$.
\end{theorem}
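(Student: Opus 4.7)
The plan is based on the classical observation that the formal sum $T := \sum_{1 \leq i < j \leq n}(i,j) \in \mathbb{C}[S_n]$ of all transpositions, being supported on a single conjugacy class, lies in the centre of the group algebra. By Schur's lemma, $T$ therefore acts on the irreducible module $V_\lambda$ as a scalar $\omega_\lambda \cdot \mathrm{Id}$. Taking traces on $V_\lambda$ (using that all transpositions are conjugate and each contributes $\chi_\lambda(\overline c)$ to the trace) yields
$$\binom{n}{2}\chi_\lambda(\overline c) = \omega_\lambda \cdot \dim V_\lambda,$$
so the whole task reduces to identifying $\omega_\lambda$.

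To evaluate $\omega_\lambda$ I would decompose $T = J_2 + J_3 + \cdots + J_n$ into the Jucys--Murphy elements $J_k := \sum_{i<k}(i,k)$, and invoke the classical diagonalisation result: in Young's seminormal basis of $V_\lambda$ indexed by standard tableaux $t$ of shape $\lambda$, each $J_k$ acts diagonally with eigenvalue equal to the content $j-i$ of the box of $t$ containing $k$. Summing over $k = 2, \dots, n$, each basis vector is thus an eigenvector of $T$ with eigenvalue equal to the total content sum $c(\lambda) := \sum_{(i,j) \in \lambda}(j-i)$, which depends only on the shape $\lambda$ and not on the filling $t$. This both confirms the scalarity and identifies $\omega_\lambda = c(\lambda)$.

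The remaining step is an elementary arithmetic computation; summing contents row by row,
$$c(\lambda) = \sum_{i \geq 1} \sum_{j=1}^{\lambda_i}(j-i) = \sum_{i \geq 1}\left(\frac{\lambda_i(\lambda_i+1)}{2} - i\lambda_i\right) = \frac{1}{2}\sum_{i\geq 1} \lambda_i(\lambda_i - 2i + 1) = \frac{p_2(\lambda)}{2}.$$
Substituting into the trace identity and using $\binom{n}{2} = |\lambda|(|\lambda|-1)/2$ gives the announced formula.

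The main obstacle is the eigenvalue statement for the Jucys--Murphy elements. A self-contained proof requires either the inductive construction of Young's seminormal form (using the multiplicity-free branching rule $V_\lambda \downarrow^{S_n}_{S_{n-1}} = \bigoplus_\mu V_\mu$, the sum running over Young diagrams $\mu$ obtained from $\lambda$ by deleting a corner box) or the equivalent Okounkov--Vershik approach. An alternative route, closer in spirit to the reference \cite{FH}, applies the Frobenius determinantal character formula to the cycle-type power sum $p_2 \cdot p_1^{n-2}$ corresponding to a transposition and compares it with the analogous expression for $\dim V_\lambda$ coming from $p_1^n$; this yields the same identity after a routine symmetric-function manipulation.
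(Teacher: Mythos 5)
Your argument is correct, but note that the paper offers no proof of this statement at all: it is quoted as a classical theorem from \cite{FH}, where it arises by specializing the Frobenius determinantal character formula (the coefficient of the Schur function $s_\lambda$ in $p_2p_1^{n-2}$) to the class of transpositions --- essentially the ``alternative route'' you sketch in your final paragraph. Your primary route is genuinely different and quite clean: Schur's lemma applied to the central element $T=\sum_{i<j}(i,j)$ gives $\binom{n}{2}\chi_\lambda(\overline c)=\omega_\lambda\dim V_\lambda$, and the identification $\omega_\lambda=c(\lambda)=\tfrac12\sum_{i\ge 1}\lambda_i(\lambda_i-2i+1)=\tfrac12 p_2(\lambda)$ then yields exactly the stated formula; your row-by-row evaluation of the content sum is the precise manipulation that produces the paper's expression for $p_2(\lambda)$, and the arithmetic checks out (e.g.\ it returns $1$, $-1$ and $n-3$ for the trivial, sign and standard representations, as it should). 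You correctly isolate the one nontrivial input, namely that $T$ acts by the content sum; the Jucys--Murphy diagonalization in Young's seminormal form is a standard but not entirely free way to obtain it, and since it relies on the multiplicity-free branching rule it is of comparable depth to the symmetric-function derivation in \cite{FH}. What your approach buys is a proof of the transposition case that bypasses the full Frobenius character formula; what the paper's source buys is a single formula valid for every cycle type, of which only the transposition specialization is used here. Either way, the statement is established and your reduction to the content-sum eigenvalue is sound.
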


Observe that $p_2(\lambda^*) = -p_2(\lambda)$. Since $\textrm{dim} V_{\lambda} = \textrm{dim} V_{\lambda^*}$ (cf. Remark 5.4) and $|\lambda|=|\lambda^*|$, we have the following corollary of Frobenius formula (and \eqref{e.2}):

\begin{corollary}\label{c.l-qro} $\ell_{\lambda}+\ell_{\lambda^*} = 2\textrm{ dim } V_{\lambda}$.
\end{corollary}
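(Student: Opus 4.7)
The plan is to combine equation \eqref{e.2} for both $\lambda$ and $\lambda^*$ and then use Frobenius' formula together with the two symmetry properties mentioned in the statement: $\dim V_\lambda = \dim V_{\lambda^*}$ (from Remark 5.4, since tensoring with the one-dimensional sign representation preserves dimension) and $p_2(\lambda^*) = -p_2(\lambda)$ (visible from the fact that transposing a Young diagram swaps rows and columns, hence swaps $\lambda_i$ with $\lambda_j^*$ in a way that reverses the sign of the quadratic form $\sum \lambda_i(\lambda_i - 2i + 1)$).

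First I would write down the two instances
\begin{equation*}
\ell_\lambda = \dim V_\lambda - \chi_\lambda(\overline{c}), \qquad
\ell_{\lambda^*} = \dim V_{\lambda^*} - \chi_{\lambda^*}(\overline{c}),
\end{equation*}
given by \eqref{e.2} applied to $\lambda$ and to its dual. Adding these and using $\dim V_{\lambda^*} = \dim V_\lambda$ reduces the corollary to showing that the two characters at the transposition $\overline{c}$ are opposite, i.e.\ $\chi_\lambda(\overline{c}) + \chi_{\lambda^*}(\overline{c}) = 0$.

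Then I would apply the Frobenius formula stated just above: since $|\lambda| = |\lambda^*| = n$ and $\dim V_\lambda = \dim V_{\lambda^*}$, we get
\begin{equation*}
\chi_\lambda(\overline{c}) + \chi_{\lambda^*}(\overline{c}) = \frac{\dim V_\lambda}{n(n-1)}\bigl(p_2(\lambda) + p_2(\lambda^*)\bigr),
\end{equation*}
so the claim reduces to $p_2(\lambda^*) = -p_2(\lambda)$. This identity is elementary: writing $p_2(\lambda) = \sum_{(i,j) \in \lambda}(j - i)$ where the sum runs over boxes of the Young diagram with $i$ the row index and $j$ the column index (which one checks equals $\sum_i \lambda_i(\lambda_i - 2i + 1)/2$ up to the obvious factor), transposing the diagram swaps $i$ and $j$ in every box and therefore reverses the sign of each summand.

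No part of this really poses an obstacle; the only thing to watch is the bookkeeping in verifying $p_2(\lambda^*) = -p_2(\lambda)$ from the definition $p_2(\lambda) = \sum_{i\geq 1}\lambda_i(\lambda_i - 2i + 1)$, which is a short direct computation using $\lambda_i^* = \#\{n : \lambda_n \geq i\}$. Alternatively one can bypass Frobenius entirely and argue that $\chi_{\lambda^*} = \chi_\lambda \cdot \mathrm{sgn}$ (from $V_{\lambda^*} \simeq V_\lambda \otimes U'$), so on the odd permutation $\overline{c}$ one directly gets $\chi_{\lambda^*}(\overline{c}) = -\chi_\lambda(\overline{c})$; this yields the corollary in one line.
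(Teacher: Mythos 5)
Your proof is correct and follows essentially the same route as the paper: apply \eqref{e.2} to $\lambda$ and $\lambda^*$, then use the Frobenius formula together with $\dim V_{\lambda^*}=\dim V_\lambda$ and $p_2(\lambda^*)=-p_2(\lambda)$ to see that $\chi_\lambda(\overline{c})+\chi_{\lambda^*}(\overline{c})=0$. Your closing remark that one can bypass Frobenius via $\chi_{\lambda^*}=\chi_\lambda\cdot\mathrm{sgn}$ evaluated at the odd permutation $\overline{c}$ is a valid and slightly cleaner shortcut, but the substance is the same.
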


\begin{example}\label{ex.1'}
\begin{itemize}
\item For $\lambda=(n)$, $\ell_{\lambda}=0$ (this is coherent with the fact that $M_n$ is quasi-regular, i.e., the trivial representation has multiplicity zero in $H_1^{(0)}(M_n,\mathbb{C})$).
%\end{example}
%%
%\begin{example}\label{ex.2'}
 By Corollary \ref{c.l-qro}, for $\lambda=(\underbrace{1,\dots,1}_{n}) = (n)^*$, we have $\ell_{\lambda}=2$.
%\end{example}
%%
%\begin{example}\label{ex.5}

\item For $n\geq 2$ and $\lambda=(n-1,1)$ we have $\textrm{dim } V_{\lambda} = n-1$, $p_2(\lambda)=n(n-3)$ and thus
$\ell_{\lambda} = 2$. Also, by Corollary \ref{c.l-qro}, for $\lambda=(2,\underbrace{1,\dots,1}_{n-2})=(n-2,2)^*$, we have $\ell_{\lambda}= 2n-4$.

\item For $n\geq 4$ and $\lambda=(n-2,2)$ we have $\textrm{dim } V_{\lambda} = \frac{n(n-3)}{2}$, $p_2(\lambda)=(n-1)(n-4)$ and thus
$\ell_{\lambda} = 2(n-3)$. Also, by Corollary \ref{c.l-qro}, for $\lambda=(2,2,\underbrace{1,\dots,1}_{n-4})=(n-2,2)^*$, we have $\ell_{\lambda}=(n-2)(n-3)$.
%\end{example}
%%
%\begin{example}\label{ex.6}
\item For $n\geq 3$ and $\lambda=(n-2,1,1)$ we have $\textrm{dim } V_{\lambda}=\frac{(n-1)(n-2)}{2}$, $p_2(\lambda)=n(n-5)$ and thus
$\ell_{\lambda}=2(n-2)$. Also, by Corollary \ref{c.l-qro}, for $\lambda=(3,\underbrace{1,\dots,1}_{n-3})=(n-2,1,1)^*$, we have $\ell_{\lambda}=(n-2)(n-3)$.

\end{itemize}
\end{example}

\begin{remark}The examples above give the multiplicities of all irreducible representations for $n\leq 5$. For $n=6$, it remains
\begin{itemize}
\item $\lambda=(3,3)$, for which $\textrm{dim } V_{\lambda}=5$, $p_2(\lambda)=6$ and $\ell_{\lambda}=4$;
\item $\lambda=(2,2,2)=(3,3)^*$, for which $\textrm{dim } V_{\lambda}=5$, $p_2(\lambda)=-6$ and $\ell_{\lambda}=6$;
\item $\lambda=(3,2,1)=(3,2,1)^*$, for which $\textrm{dim } V_{\lambda}=8$, $p_2(\lambda)=0$ and $\ell_{\lambda}=8$.
\end{itemize}
\end{remark}

\begin{remark} Still concerning regular origamis associated to symmetric groups, we observe that a theorem of O. Ore (cf. \cite{Ore}) says that \emph{every} element $c$ of $A_n$ is the commutator $c=[g_r,g_u]$ of two elements $g_r,g_u$ in $S_n$. However, it is not obvious that, for a given $c$, one can choose $g_r$ and $g_u$ with $c=[g_r,g_u]$ and $(g_r, g_u)$ \emph{generates} $A_n$ or $S_n$. It has been shown in the PhD thesis \cite[Theorem 4.26]{Zm} that there always exists a generating pair $(g_r, g_u)$ of $A_{n}$ with $c=[g_r,g_u]$ when $c$ has a large support (namely, when $c$ moves at least $p+2$ points, where $p$  is a prime such that $\left[\frac{3n}{4}\right]\le p\le n-3$).
\end{remark}

\subsection{A quasi-regular origami $H\backslash G$ with $G/N$ Abelian not cyclic}\label{ss.qro-ab-nc}

As  announced in Remark \ref{r.qro-ab-nc}, this subsection contains the description of a quasiregular origami $H\backslash G$ such that the normalizer $N$ of $H$ is normal in $G$ and $G/N$ is the product of two cyclic groups but is  not cyclic.

The basic idea is to slightly modify the family of examples considered in the previous subsection. More precisely, denote by $E=\{0,1,\dots,7\}$ and consider the natural partition
$$E=E_0\cup E_1\cup E_2\cup E_3=\{0,4\}\cup\{1,5\}\cup\{2,6\}\cup\{3,7\}$$
by residues modulo $4$. Let $g_r:=(01)(45)(2367)$ and $g_u:=(02)(46)(1357)$, and consider
\begin{itemize}
\item $G:=\langle g_r, g_u\rangle$ the group generated by $g_r$ and $g_u$;
\item $H:=\{g\in G: g|_{E_0}=\textrm{id}\}$.
\end{itemize}
Note that $g_r$ and $g_u$ act on the partition $E_0\cup E_1\cup E_2\cup E_3$ by the permutations $\overline{g_r}=(E_0,E_1)(E_2,E_3)$ and $\overline{g_u}=(E_0,E_2)(E_1,E_3)$, so that $G=\langle g_r,g_u\rangle$ acts on $E_0\cup E_1\cup E_2\cup E_3$ by Klein's group $\mathbb{Z}/2\mathbb{Z}\times \mathbb{Z}/2\mathbb{Z}$.

\medskip

Also, observe that $\bigcap\limits_{g\in G}g H g^{-1}=\{\textrm{id}\}$ because $g_rHg_r^{-1}=\{g\in G: g|_{E_1}=\textrm{id}\}$, $g_uHg_u^{-1}=\{g\in G: g|_{E_2}=\textrm{id}\}$ and $g_ug_rHg_r^{-1}g_u^{-1}=\{g\in G: g|_{E_3}=\textrm{id}\}$. Thus, the data $M=(G, H, g_r, g_u)$ defines an origami. Observe that $M$ is \emph{not} a regular origami because $H$ is not trivial: for instance, $g_r^2=(26)(37)\in H$ and $g_u^2=(15)(37)\in H$.

\medskip

Now we consider the normalizer $N$ of $H$ in $G$. Since $G$ acts by Klein's group on $E_0,\dots, E_3$, one has that $nHn^{-1}=H$ if and only if $n(E_i)=E_i$ for all $i=0,\dots,3$. It follows that $N$ is a normal subgroup of $G$ and $G/N$ is isomorphic to Klein's group $\mathbb{Z}/2\mathbb{Z}\times\mathbb{Z}/2\mathbb{Z}\simeq\langle\overline{g_r},\overline{g_u}\rangle$, that is, the origami $M$ has the required properties.

\subsection{A family of regular origamis associated to $SL(2,\Fset_p)$}\label{ss.ro}

Let $p\geq 3$ be an odd prime and $G=SL(2,\mathbb{F}_p)$. The order of $G$ is  $|G|=(p-1)p(p+1)$. The elements
$$g_r=\left(\begin{array}{cc} 1 & a \\ 0 & 1\end{array}\right) \quad \textrm{ and } \quad g_u=\left(\begin{array}{cc} 1 & 0 \\ b & 1\end{array}\right)$$
generate $G$ whenever $\pi:=ab\neq 0$. We will study the regular origami associated to this generating pair.

\medskip
\noindent\textbf{Notations}. In the sequel, $\mathbb{F} = \mathbb{F}_p$, $\varepsilon$ is a generator of the cyclic group $\mathbb{F}^*$;   $\mathbb{F}'=\mathbb{F}_{p^2}$ is a quadratic extension of $\mathbb{F}$ (unique up to isomorphism); $C$ is the cyclic subgroup of order $p+1$ of $(\mathbb{F}')^*$ consisting of elements $x$ with $N(x):=x^{p+1} = 1$,  $\eta$ a generator of $C$. Choosing a basis of $\mathbb{F}'$ over $\mathbb{F}$, we identify $C$ with a subgroup of $SL(2,\mathbb{F})$.
%Here, we recall that $\mathbb{F}'$ is a degree 2 extension of $\mathbb{F}$: this can be seen from the so-called \emph{Frobenius automorphism} $\mathbb{F}'\to\mathbb{F}, x\mapsto x^p$. Also, using the Frobenius automorphism, we can define a norm $N(x)=x\cdot x^p=x^{p+1}$.
%In this language, $C=\{x\in (\mathbb{F}')^*: x^{p+1}=1\}$.
%Moreover, we fix $\eta$ a generator of $C$. Finally, we think of $C$ as a subset of $SL(2,\mathbb{F}_p)$ by looking at the action of $C$ in a fixed basis $\mathbb{F}'=\mathbb{F}\oplus u\mathbb{F}$.

\subsubsection{Conjugacy classes in $SL(2,\mathbb{F}_p)$}

The following table (taken from \cite{FH}) presents the information we will need about the conjugacy classes of $SL(2,\mathbb{F})$, namely, it gives representatives for each ``type'' of class (in the first column), the number of elements on each class of a given ``type'' (in the second column), and the number of classes of a given ``type'' (in the third column).

\bigskip

\begin{tabular}{|c|c|c|}
\hline
\textrm{Representative} & \#\textrm{ of elements in the class} & \#\textrm{ of classes}\\
\hline
$\left(\begin{array}{cc}1 & 0 \\ 0 & 1\end{array}\right)$ & $1$ & $1$ \\
\hline
$\left(\begin{array}{cc}-1 & 0 \\ 0 & -1\end{array}\right)$ & $1$ & $1$ \\
\hline
$\left(\begin{array}{cc}1 & 1 \\ 0 & 1\end{array}\right)$ & $(p^2-1)/2$ & $1$ \\
\hline
$\left(\begin{array}{cc}1 & \varepsilon \\ 0 & 1\end{array}\right)$ & $(p^2-1)/2$ & $1$ \\
\hline
$\left(\begin{array}{cc}-1 & -1 \\ 0 & -1\end{array}\right)$ & $(p^2-1)/2$ & $1$ \\
\hline
$\left(\begin{array}{cc}-1 & -\varepsilon \\ 0 & -1\end{array}\right)$ & $(p^2-1)/2$ & $1$ \\
\hline
$\left(\begin{array}{cc}\varepsilon^j & 0 \\ 0 & \varepsilon^{-j}\end{array}\right), 0<j<\frac{(p-1)}{2}$ & $p(p+1)$ & $(p-3)/2$ \\
\hline
$\eta^j, 0<j<\frac{(p+1)}{2}$ & $p(p-1)$ & $(p-1)/2$ \\
\hline
\end{tabular}

\bigskip

The total number of conjugacy classes is $p+4$.

\subsubsection{Irreducible representations}

Below we list the $p+4$ irreducible representations of $G=SL(2,\mathbb{F})$ (see \cite{FH}).

\begin{itemize}
\item[(a)] the trivial representation $U$.
\item[(b)] the standard representation $V$ coming from the action of $G$ on $\mathbb{P}^1(\mathbb{F}_p)$ (that is, we have a permutation representation of $G$ that we write as $U\oplus V$). The character $\chi_V$ is given in the following table
\begin{center}
\begin{tabular}{|c|c|}
\hline
& $\chi_V$ \\
\hline
$\pm\left(\begin{array}{cc}1 & 0 \\ 0 & 1\end{array}\right)$ & $p$ \\
\hline
$\pm\left(\begin{array}{cc}1 & \ast \\ 0 & 1\end{array}\right)$ & $0$ \\
\hline
$\left(\begin{array}{cc}\varepsilon^j & 0 \\ 0 & \varepsilon^{-j}\end{array}\right)$ & $1$ \\
\hline
$\eta^j$ & $-1$ \\
\hline
\end{tabular}
\end{center}
\item[(c)] let $\tau:\mathbb{F}^*\to\mathbb{C}^*$ be a character with $\textrm{Im}\tau(\varepsilon)>0$ (there are $(p-3)/2$ possible choices of $\tau$), and let
$$B=\left\{g=\left(\begin{array}{cc}a & c \\ 0 & a^{-1}\end{array}\right)\in SL(2,\mathbb{F}_p)\right\}$$
be the usual \emph{Borel subgroup}. Define the character $B\to\mathbb{C^*}, g\mapsto \tau(a)$ and consider the induced representation $W_{\tau}$
of $SL(2,\mathbb{F}_p)$. One has $\textrm{dim} W_\tau = p+1=[G:B]$, and the character $\chi_{W_\tau}$ is given in the following table
\begin{center}
\begin{tabular}{|c|c|}
\hline
& $\chi_{W_\tau}$ \\
\hline
$\left(\begin{array}{cc}1 & 0 \\ 0 & 1\end{array}\right)$ & $p+1$ \\
\hline
$\left(\begin{array}{cc}-1 & 0 \\ 0 & -1\end{array}\right)$ & $(p+1)\tau(-1)$ \\
\hline
$\left(\begin{array}{cc}1 & \ast \\ 0 & 1\end{array}\right)$ & $1$ \\
\hline
$\left(\begin{array}{cc}-1 & \ast \\ 0 & -1\end{array}\right)$ & $\tau(-1)$ \\
\hline
$\left(\begin{array}{cc}\varepsilon^j & 0 \\ 0 & \varepsilon^{-j}\end{array}\right)$ & $\tau(\varepsilon^j) + \tau(\varepsilon^{-j})$ \\
\hline
$\eta^j$ & $0$ \\
\hline
\end{tabular}
\end{center}
\item[(d)] in the case of the character $\tau_*$ with $\tau_*(\varepsilon)=-1$, the induced representation $W_{\tau_*}$ is \emph{reducible}:
indeed, $W_{\tau_*}=W'\oplus W^{''}$ is the sum of two irreducible representations. Observe that $\tau_*(\varepsilon)=-1$ implies that $\tau_*(-1)=(-1)^{(p-1)/2}$. The characters $\chi_{W'}, \chi_{W^{''}}$ are given in the following tables.

For $p\equiv 1 (\textrm{mod } 4)$,

\begin{center}
\begin{tabular}{|c|c|c|}
\hline
& $\chi_{W'}$ & $\chi_{W^{''}}$\\
\hline
$\left(\begin{array}{cc}1 & 0 \\ 0 & 1\end{array}\right)$ & $(p+1)/2$ & $(p+1)/2$ \\
\hline
$\left(\begin{array}{cc}-1 & 0 \\ 0 & -1\end{array}\right)$ & $(p+1)/2$ & $(p+1)/2$ \\
\hline
$\left(\begin{array}{cc}1 & 1 \\ 0 & 1\end{array}\right)$ & $(1+\sqrt{p})/2$ & $(1-\sqrt{p})/2$ \\
\hline
$\left(\begin{array}{cc}1 & \varepsilon \\ 0 & 1\end{array}\right)$ & $(1-\sqrt{p})/2$ & $(1+\sqrt{p})/2$ \\
\hline
$\left(\begin{array}{cc}-1 & -1 \\ 0 & -1\end{array}\right)$ & $(1+\sqrt{p})/2$ & $(1-\sqrt{p})/2$ \\
\hline
$\left(\begin{array}{cc}-1 & -\varepsilon \\ 0 & -1\end{array}\right)$ & $(1-\sqrt{p})/2$ & $(1+\sqrt{p})/2$ \\
\hline
$\left(\begin{array}{cc}\varepsilon^j & 0 \\ 0 & \varepsilon^{-j}\end{array}\right)$ & $(-1)^j$ & $(-1)^j$ \\
\hline
$\eta^j$ & $0$ & $0$ \\
\hline
\end{tabular}
\end{center}

and, for $p\equiv 3 (\textrm{mod } 4)$,

\begin{center}
\begin{tabular}{|c|c|c|}
\hline
& $\chi_{W'}$ & $\chi_{W^{''}}$\\
\hline
$\left(\begin{array}{cc}1 & 0 \\ 0 & 1\end{array}\right)$ & $(p+1)/2$ & $(p+1)/2$ \\
\hline
$\left(\begin{array}{cc}-1 & 0 \\ 0 & -1\end{array}\right)$ & $-(p+1)/2$ & $-(p+1)/2$ \\
\hline
$\left(\begin{array}{cc}1 & 1 \\ 0 & 1\end{array}\right)$ & $(1+i\sqrt{p})/2$ & $(1-i\sqrt{p})/2$ \\
\hline
$\left(\begin{array}{cc}1 & \varepsilon \\ 0 & 1\end{array}\right)$ & $(1-i\sqrt{p})/2$ & $(1+i\sqrt{p})/2$ \\
\hline
$\left(\begin{array}{cc}-1 & -1 \\ 0 & -1\end{array}\right)$ & $(-1-i\sqrt{p})/2$ & $(-1+i\sqrt{p})/2$ \\
\hline
$\left(\begin{array}{cc}-1 & -\varepsilon \\ 0 & -1\end{array}\right)$ & $(-1+i\sqrt{p})/2$ & $(-1-i\sqrt{p})/2$ \\
\hline
$\left(\begin{array}{cc}\varepsilon^j & 0 \\ 0 & \varepsilon^{-j}\end{array}\right)$ & $(-1)^j$ & $(-1)^j$ \\
\hline
$\eta^j$ & $0$ & $0$ \\
\hline
\end{tabular}
\end{center}
\item[(e)] let $\varphi: C\to\mathbb{C}^*$ be a character with $\textrm{Im}\varphi(\eta)>0$ (there are $(p-1)/2$ possible choices of $\varphi$). Using $\varphi$ it is possible to construct a representation $X_{\varphi}$ whose character $\chi_{X_{\varphi}}$ is given by the following table
\begin{center}
\begin{tabular}{|c|c|}
\hline
& $\chi_{X_{\varphi}}$ \\
\hline
$\left(\begin{array}{cc}1 & 0 \\ 0 & 1\end{array}\right)$ & $p-1$ \\
\hline
$\left(\begin{array}{cc}-1 & 0 \\ 0 & -1\end{array}\right)$ & $(p-1)\varphi(-1)$ \\
\hline
$\left(\begin{array}{cc}1 & \ast \\ 0 & 1\end{array}\right)$ & $-1$ \\
\hline
$\left(\begin{array}{cc}-1 & \ast \\ 0 & -1\end{array}\right)$ & $-\varphi(-1)$ \\
\hline
$\left(\begin{array}{cc}\varepsilon^j & 0 \\ 0 & \varepsilon^{-j}\end{array}\right)$ & $0$ \\
\hline
$\eta^j$ & $-(\varphi(\eta^j)+\varphi(\eta^{-j}))$ \\
\hline
\end{tabular}
\end{center}\item[(f)] let $\varphi_*$ be the character with $\varphi_*(\eta)=-1$. In this case, $X_{\varphi_*}$ is \emph{reducible}:
indeed, $X_{\varphi_*} = X'\oplus X^{''}$ is the direct sum of two irreducible representations. Observe that
$\varphi_*(\eta)=-1$ implies that $\varphi_*(-1) = (-1)^{(p+1)/2}$. The characters $\chi_{X'}, \chi_{X^{''}}$ are given in the following tables.

For $p\equiv 1 (\textrm{mod }4)$,

\begin{center}
\begin{tabular}{|c|c|c|}
\hline
& $\chi_{X'}$ & $\chi_{X^{''}}$\\
\hline
$\left(\begin{array}{cc}1 & 0 \\ 0 & 1\end{array}\right)$ & $(p-1)/2$ & $(p-1)/2$ \\
\hline
$\left(\begin{array}{cc}-1 & 0 \\ 0 & -1\end{array}\right)$ & $-(p-1)/2$ & $-(p-1)/2$ \\
\hline
$\left(\begin{array}{cc}1 & 1 \\ 0 & 1\end{array}\right)$ & $(-1+\sqrt{p})/2$ & $(-1-\sqrt{p})/2$ \\
\hline
$\left(\begin{array}{cc}1 & \varepsilon \\ 0 & 1\end{array}\right)$ & $(-1-\sqrt{p})/2$ & $(-1+\sqrt{p})/2$ \\
\hline
$\left(\begin{array}{cc}-1 & -1 \\ 0 & -1\end{array}\right)$ & $(1-\sqrt{p})/2$ & $(1+\sqrt{p})/2$ \\
\hline
$\left(\begin{array}{cc}-1 & -\varepsilon \\ 0 & -1\end{array}\right)$ & $(1+\sqrt{p})/2$ & $(1-\sqrt{p})/2$ \\
\hline
$\left(\begin{array}{cc}\varepsilon^j & 0 \\ 0 & \varepsilon^{-j}\end{array}\right)$ & $0$ & $0$ \\
\hline
$\eta^j$ & $(-1)^{j+1}$ & $(-1)^{j+1}$ \\
\hline
\end{tabular}
\end{center}

and, for $p\equiv 3 (\textrm{mod }4)$,

\begin{center}
\begin{tabular}{|c|c|c|}
\hline
& $\chi_{X'}$ & $\chi_{X^{''}}$\\
\hline
$\left(\begin{array}{cc}1 & 0 \\ 0 & 1\end{array}\right)$ & $(p-1)/2$ & $(p-1)/2$ \\
\hline
$\left(\begin{array}{cc}-1 & 0 \\ 0 & -1\end{array}\right)$ & $(p-1)/2$ & $(p-1)/2$ \\
\hline
$\left(\begin{array}{cc}1 & 1 \\ 0 & 1\end{array}\right)$ & $(-1+i\sqrt{p})/2$ & $(-1-i\sqrt{p})/2$ \\
\hline
$\left(\begin{array}{cc}1 & \varepsilon \\ 0 & 1\end{array}\right)$ & $(-1-i\sqrt{p})/2$ & $(-1+i\sqrt{p})/2$ \\
\hline
$\left(\begin{array}{cc}-1 & -1 \\ 0 & -1\end{array}\right)$ & $(-1+i\sqrt{p})/2$ & $(-1-i\sqrt{p})/2$ \\
\hline
$\left(\begin{array}{cc}-1 & -\varepsilon \\ 0 & -1\end{array}\right)$ & $(-1-i\sqrt{p})/2$ & $(-1+i\sqrt{p})/2$ \\
\hline
$\left(\begin{array}{cc}\varepsilon^j & 0 \\ 0 & \varepsilon^{-j}\end{array}\right)$ & $0$ & $0$ \\
\hline
$\eta^j$ & $(-1)^{j+1}$ & $(-1)^{j+1}$ \\
\hline
\end{tabular}
\end{center}
\end{itemize}

In summary, the $p+4$ irreducible representations are $U$, $V$, $W_\tau$ ($(p-3)/2$ of them), $W',W^{''}$, $X_{\varphi}$ ($(p-1)/2$ of them), $X',X^{''}$.

\medskip

The representations $U$ and $V$ are defined over $\Qset$. To determine whether the other irreducible representations are real, complex or quaternionic, we observe first from the previous tables  that the characters are
real-valued except for $W', W^{''},X',X^{''}$ when $p\equiv 3 (\textrm{mod }4)$. These four representations are thus \emph {complex} (when $p\equiv 3 (\textrm{mod }4)$).
 Observe also that, when $p\equiv 1 (\textrm{mod }4)$, the dimension of $W', W^{''}$ is odd so these representations are \emph {real}.

%\begin{itemize}
%\item    $U$ and $V$ are \emph{real}
%\item    $\chi_{W_\tau}$ and $\chi_{X_\varphi}$ take only real values
%\item    $W', W^{''}$ are \emph{complex} when $p\equiv 3 (\textrm{mod }4)$ and \emph{real} when $p\equiv 1 (\textrm{mod 4})$ (because in the latter case the characters $\chi_{W'}, \chi_{W^{''}}$ take only real values, so that $W', W^{''}$ are either real or quaternionic, and the quaternionic possibility is easily ruled out as $(p+1)/2=\textrm{dim }W'=\textrm{dim } W^{''}$ is odd, while quaternionic representations always have even dimension).
%\item    $X',X^{''}$ are \emph{complex} when $p\equiv 3 (\textrm{mod }4)$, and real \emph{or} quaternionic when $p\equiv 1 (\textrm{mod }4)$.
%\end{itemize}
 Recall the general criterion based on the so-called \emph{Frobenius-Schur indicator}:

\begin{theorem} Let $\chi$ be a character of an irreducible representation of a finite group $G$. Then,
$$\frac{1}{|G|}\sum\limits_{g\in G}\chi(g^2)=\left\{\begin{array}{cl}1, & \textrm{ for } \chi \textrm{ real } \\ 0, & \textrm{ for } \chi \textrm{ complex } \\ -1, & \textrm{ for } \chi \textrm{ quaternionic }\end{array}\right.$$
\end{theorem}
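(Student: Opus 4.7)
The plan is to interpret the average $I(\chi) := \frac{1}{|G|}\sum_{g\in G}\chi(g^2)$ as the difference of two multiplicities attached to the tensor square of the representation, and then separately identify the three cases by means of Schur's lemma together with a sign computation on an invariant bilinear form. I will write $V$ for the irreducible $\Cset$-representation affording $\chi$.

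First I will use the standard identities
\begin{equation*}
\chi(g)^2 = \chi_{S^2 V}(g) + \chi_{\Lambda^2 V}(g), \qquad \chi(g^2) = \chi_{S^2 V}(g) - \chi_{\Lambda^2 V}(g).
\end{equation*}
Averaging the second one over $G$ gives
\begin{equation*}
I(\chi) = \dim (S^2 V)^G - \dim (\Lambda^2 V)^G,
\end{equation*}
which, after identifying $(V\otimes V)^*$ with $G$-invariant bilinear forms on $V$, equals the dimension of the space of $G$-invariant symmetric bilinear forms on $V$ minus the dimension of the space of $G$-invariant alternating bilinear forms on $V$.

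Next I will invoke Schur's lemma. The full space of $G$-invariant bilinear forms on $V$ is isomorphic to $\textrm{Hom}_G(V,V^*)$. Since $V$ is $\Cset$-irreducible, this space is either $0$ or one-dimensional, and is non-zero exactly when $V\cong V^*$, equivalently when $\chi$ is real-valued. The complex case is precisely the case $V\not\cong V^*$, so both dimensions vanish and $I(\chi)=0$. In the remaining case there is a unique (up to scalar) non-zero invariant bilinear form $B$ on $V$; the transposition involution on $\textrm{Hom}_G(V,V^*)$ acts on this one-dimensional space as $\pm 1$, so $B$ is either symmetric, yielding $I(\chi)=+1$, or alternating, yielding $I(\chi)=-1$. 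It remains to match these two subcases with the real and quaternionic types.

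For this match I will fix a $G$-invariant Hermitian scalar product $H$ on $V$ (unique up to a positive real scalar) and define the $\Cset$-antilinear $G$-equivariant map $J:V\to V$ by $H(Jv,w)=B(v,w)$. Then $J^2$ is $\Cset$-linear and $G$-equivariant, hence a real scalar by Schur; a direct expansion of $H(J^2 v,v)=\overline{H(v,J^2 v)}$ using the relation $B(v,w)=\pm B(w,v)$ together with $H(v,w)=\overline{H(w,v)}$ shows that $J^2$ is a \emph{positive} scalar when $B$ is symmetric and a \emph{negative} scalar when $B$ is alternating. After rescaling $B$ one normalizes $J^2=\mathrm{id}$ or $J^2=-\mathrm{id}$, so that $J$ is either a $G$-invariant real structure on $V$ (hence $\chi$ is real in the sense of Section 3.6) or a $G$-invariant quaternionic structure on $V$ (hence $\chi$ is quaternionic). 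This exhausts the three cases and matches $I(\chi)\in\{1,0,-1\}$ with the Frobenius-Schur trichotomy. The one non-formal step, and the only real obstacle in the argument, is the sign computation for $J^2$; everything else reduces to character manipulations and Schur's lemma.
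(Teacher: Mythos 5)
Your proof is correct. Note, however, that the paper does not prove this statement at all: it is recalled verbatim as the classical Frobenius--Schur criterion (``Recall the general criterion based on the so-called \emph{Frobenius-Schur indicator}''), so there is no in-paper argument to compare yours against. What you have written is the standard textbook proof, as found in the references the paper cites elsewhere (Serre \cite{S}, Fulton--Harris \cite{FH}): the identity $\chi(g^2)=\chi_{S^2V}(g)-\chi_{\Lambda^2V}(g)$ converts the indicator into $\dim(S^2V^*)^G-\dim(\Lambda^2V^*)^G$, Schur's lemma reduces the space of invariant bilinear forms to dimension $0$ or $1$ according to whether $V\cong V^*$, and the antilinear intertwiner $J$ defined by $H(Jv,w)=B(v,w)$ sorts the symmetric and alternating subcases into real and quaternionic type. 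Two minor points worth tightening if you write this out in full: the passage from $\dim(S^2V)^G$ to the space of invariant symmetric bilinear forms uses that a representation and its dual contain the trivial representation with equal multiplicity; and the sign computation for $J^2$ ultimately rests on the positive-definiteness of $H$ (one gets $\overline{c}\,H(v,v)=\pm H(Jv,Jv)$ with $H(Jv,Jv)>0$ because $B$, being a nonzero invariant form on an irreducible module, is nondegenerate and hence $J$ is injective). With those details filled in, the argument is complete.
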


Applying this criterion (with the aid of the previous character tables), one can check that
\begin{itemize}
\item $X',X^{''}$ are \emph{quaternionic} when $p\equiv 1 (\textrm{mod }4)$;
\item for $\tau(\varepsilon) = \exp(2\pi i j/(p-1))$ (with $0<j<(p-1)/2$), $W_{\tau}$ is \emph{real} when $j$ is even, and $W_{\tau}$ is \emph{quaternionic} when $j$ is odd;
\item for $\varphi(\eta)=\exp(2\pi i j/(p+1))$ (with $0<j<(p+1)/2$), $X_{\varphi}$ is \emph{real} when $j$ is even, and $X_{\varphi}$ is \emph{quaternionic} when $j$ is odd.
\end{itemize}
In a nutshell, our discussions so far can be resumed as follows:
\begin{itemize}
\item for $p\equiv 1 (\textrm{mod }4)$, there are $(p+5)/2 = 2+2+(p-3)/2$ real representations, $(p+3)/2 = 2+(p-1)/2$ quaternionic representations, and $0$ complex representations.
\item for $p\equiv 3 (\textrm{mod }4)$, there are $(p+1)/2=2+(p-3)/2$ real representations, $(p-1)/2$ quaternionic representations, and $4=2+2$ complex representations.
\end{itemize}
This concludes our quick review of the representation theory of $G=SL(2,\mathbb{F})$. Now, we pass to the study of the regular origami.

\subsubsection{The regular origami $(G,g_r,g_u)$}

Recall that we have chosen
$$g_r=\left(\begin{array}{cc}1 & a \\ 0 & 1\end{array}\right) \quad \textrm{and} \quad g_u=\left(\begin{array}{cc}1 & 0 \\ b & 1\end{array}\right)$$
with $\pi:=ab\neq 0$. Their commutator $c=g_r^{-1}g_u^{-1}g_r g_u$ is
$$c=\left(\begin{array}{cc}1+ab+a^2b^2 & a^2b \\ -a^2b & 1-ab\end{array}\right)$$
and hence its trace is $\textrm{tr}(c)=2+\pi^2$. The nature of the eigenvalues of $c$ is described by the discriminant
$$(\textrm{tr}(c))^2-4=\pi^2(\pi^2+4)$$

There are three cases:
\begin{itemize}
\item \emph{parabolic}: when $\pi^2=-4$ (this can only happen when $p\equiv 1 (\textrm{mod }4)$ as $-1$ must be a square), then $c$ is conjugated to $\left(\begin{array}{cc}-1 & \ast \\ 0 & -1\end{array}\right)$; the order of $c$ is $2p$.
\item \emph{hyperbolic}: when $\pi^2+4\neq 0$ is a square in $\mathbb F$, $c$ is conjugated to $\left(\begin{array}{cc}\varepsilon^j & 0 \\ 0 & \varepsilon^{-j}\end{array}\right)$ for some $0<j<\frac {p-1}2$. The integer $j$ is \emph {even}: writing $\pi^2+4 = u^2$, $u := \lambda + \lambda^{-1}$ with $\lambda^{\pm 1}:= \frac 12(u \pm \pi) \in \mathbb {F}^*$, we have $\lambda^2 +   \lambda^{-2} = u^2 -2 = \pi^2 +2 = \textrm{tr}(c)$. The order of $c$ is $>2$ and divides $\frac {p-1}2$.
%$\frac
%, and the order $\textrm{ord}(c)$
%of $c$ satisfies $\textrm{ord}(c)>2$ and $\textrm{ord}(c)|(p-1)$
\item \emph{elliptic}: when $\pi^2+4$ is not a square in $\mathbb F$, $c$ is conjugated to $ \eta^j$ for some $0<j<\frac {p+1}2$. The integer $j$ is \emph {odd}: if we had $c = c'^2$, with $\textrm{tr}(c') = \lambda + \lambda^{-1} \in \mathbb F$ for some $\lambda \in \mathbb {F}'$, then we would have $\pi^2+4 = \textrm{tr}(c) +2 = \lambda^2 +   \lambda^{-2}+2 = (\textrm{tr}(c'))^2$. The order of $c$ is $>2$,  even, and divides $p+1$.
%, and the order $\textrm{ord}(c)$ of $c$ satisfies $\textrm{ord}(c)>2$ and $\textrm{ord}(c)|(p+1)$.
\end{itemize}

Recall that (see \cite{Zm}) the genus $g$ of the regular origami $(G,g_r,g_u)$ is related to $\textrm{ord}(c)$ by the  formula:
$$g=1+\frac{1}{2}|G|\left(1-\frac{1}{\textrm{ord}(c)}\right)$$

\begin {remark} Observe that
$$\#\{\pi^2: \pi^2+4\neq 0 \textrm{ is a square} \} =
\left\{\begin{array}{cc}(p-5)/4, & \textrm{ if } p\equiv 1 (\textrm{mod }4) \\ (p-3)/4, & \textrm{ if } p\equiv 3 (\textrm{mod }4)\end{array}\right.$$
and
$$\#\{\pi^2: \pi^2+4 \textrm{ is not a square} \}=\left\{\begin{array}{cc}(p-1)/4, & \textrm{ if } p\equiv 1 (\textrm{mod }4) \\ (p+1)/4, & \textrm{ if } p\equiv 3 (\textrm{mod }4)\end{array}\right.$$

\end{remark}

\begin{remark}

In the hyperbolic or elliptic cases, the order of $c$ can be computed as follows: the sequence $t_n:=\textrm{tr}(c^n)$ satisfies $t_0=2$, $t_1=2+\pi^2$ and the recurrence relation $t_{n+1} = t_1 t_n - t_{n-1}$ (derived for instance from the general formula $\textrm{tr}(A^2B)=\textrm{tr}(A)\cdot\textrm{tr}(AB)-\textrm{tr}(B)$ for $A,B\in SL(2)$). The order of $c$ is the smallest positive integer $n$ such that $t_n =2$.

\end{remark}

For the first values of $p$, one gets the following tables (where  $\left(\begin{array}{c} a \\ p\end{array}\right)$ is the Legendre symbol).

For $p=5$:

\begin{center}
\begin{tabular}{|c|c|c|c|}
\hline
$\pi^2$ & $\pi^2+4$ & $\left(\begin{array}{c} \pi^2+4 \\ p\end{array}\right)$ & $\textrm{ord}(c)$ \\
\hline
$1$ & $0$ & $0$ & $10$ \\
\hline
$4$ & $3$ & $-1$ & $6$ \\
\hline
\end{tabular}
\end{center}

For $p=7$:

\begin{center}
\begin{tabular}{|c|c|c|c|}
\hline
$\pi^2$ & $\pi^2+4$ & $\left(\begin{array}{c} \pi^2+4 \\ p\end{array}\right)$ & $\textrm{ord}(c)$ \\
\hline
$1$ & $5$ & $-1$ & $8$ \\
\hline
$4$ & $1$ & $1$ & $3$ \\
\hline
$2$ & $6$ & $-1$ & $8$ \\
\hline
\end{tabular}
\end{center}

For $p=11$:

\begin{center}
\begin{tabular}{|c|c|c|c|}
\hline
$\pi^2$ & $\pi^2+4$ & $\left(\begin{array}{c} \pi^2+4 \\ p\end{array}\right)$ & $\textrm{ord}(c)$ \\
\hline
$1$ & $5$ & $1$ & $5$ \\
\hline
$4$ & $8$ & $-1$ & $12$ \\
\hline
$9$ & $2$ & $-1$ & $4$ \\
\hline
$5$ & $9$ & $1$ & $5$ \\
\hline
$3$ & $7$ & $-1$ & $12$ \\
\hline
\end{tabular}
\end{center}

For $p=13$:

\begin{center}
\begin{tabular}{|c|c|c|c|}
\hline
$\pi^2$ & $\pi^2+4$ & $\left(\begin{array}{c} \pi^2+4 \\ p\end{array}\right)$ & $\textrm{ord}(c)$ \\
\hline
$1$ & $5$ & $-1$ & $14$ \\
\hline
$4$ & $8$ & $-1$ & $14$ \\
\hline
$9$ & $0$ & $0$ & $26$ \\
\hline
$3$ & $7$ & $-1$ & $14$ \\
\hline
$12$ & $3$ & $1$ & $6$ \\
\hline
$10$ & $1$ & $1$ & $3$ \\
\hline
\end{tabular}
\end{center}

In the next subsections, we use Corollary \ref{c.3-5}, the tables for the characters and the formula

\begin{equation}\label{eq.dimfix}
\textrm{dim }Fix_{\lambda}(c) = \frac 1{\textrm{ord}(c)} \sum\limits_{j=0}^{\textrm{ord}(c)-1}\chi_{\lambda}(c^j)
\end{equation}
to compute the multiplicities in $H_1^{(0)}$ of the irreducible representations of $G$. The computations are straightforward but fastidious and are omitted.

\subsubsection{Multiplicities in the parabolic case}

The parabolic case  occurs only when $p \equiv 1 (\textrm{mod }4)$. We have then

$$\textrm{dim }Fix_{\lambda}(c)=\left\{\begin{array}{cl}1 & \textrm{ for } \lambda = U, V, W', W^{''} \\ 0 & \textrm{ for } \lambda=X_{\varphi}, X', X^{''} \\ 1+\tau(-1)\in\{0,2\} & \textrm{ for } \lambda = W_{\tau}\end{array}\right.$$

From Corollary  \ref{c.3-5}, we then get for the multiplicities

$$\ell_{\lambda}=\textrm{codim}\textrm{Fix}_{\lambda}(c)=\left\{\begin{array}{cl}0 & \textrm{ for } \lambda = U \\
p-1 & \textrm{ for } \lambda=V \\ (p-1)/2 & \textrm{ for } W', W^{''} \\ p-1 & \textrm{ for } \lambda=X_{\varphi} \\
(p-1)/2 & \textrm{ for }\lambda = X',X^{''} \\ p-\tau(-1) & \textrm{ for } \lambda=W_{\tau} \end{array}\right.$$

\subsubsection{Multiplicities in the hyperbolic case}

In the hyperbolic case, the order of $c$ is $>2$ and divides $\frac {p-1}2$. We discuss according to the parity of $\textrm{ord}(c)$, noticing that $\textrm{ord}(c)$ can be even only when $p \equiv 1 (\textrm{mod }4)$.
\begin{enumerate}
\item We first assume that $\textrm{ord}(c)$ is even.
\begin{itemize}
\item for $\lambda = U$, we have $\ell_{\lambda}= 0$.
\item for $\lambda = V$, we have $\ell_{\lambda}= (p-1) (1-\frac 2{\textrm{ord}(c)})$.
\item for $\lambda = W_{\tau}$ with $\tau(c) =1$, we have $\ell_{\lambda}= (p-1) (1-\frac 2{\textrm{ord}(c)})$.
\item for $\lambda = W_{\tau}$ with $\tau(c) \ne 1$, we have $\ell_{\lambda}= 2+(p-1)(1-\frac {1 + \tau(-1)}{\textrm{ord}(c)})$.
\item for $\lambda = W'$ or $W''$, we have $\ell_{\lambda}= \frac {p-1}2 (1-\frac 2{\textrm{ord}(c)})$.
\item for $\lambda = X_{\varphi}$, we have $\ell_{\lambda}= (p-1) (1- \frac {1 + \varphi(-1)}{\textrm{ord}(c)})$.
\item for $\lambda = X'$ or $X''$, we have $\ell_{\lambda}= \frac {p-1}2$.
\end{itemize}
\item We now assume that $\textrm{ord}(c)$ is odd.
\begin{itemize}
\item for $\lambda = U$, we have $\ell_{\lambda}= 0$.
\item for $\lambda = V$, we have $\ell_{\lambda}= (p-1) (1-\frac 1{\textrm{ord}(c)})$.
\item for $\lambda = W_{\tau}$ with $\tau(c) =1$, we have $\ell_{\lambda}= (p-1) (1-\frac 1{\textrm{ord}(c)})$.
\item for $\lambda = W_{\tau}$ with $\tau(c) \ne 1$, we have $\ell_{\lambda}= 2+(p-1)(1-\frac {p-1}{\textrm{ord}(c)})$.
\item for $\lambda = W'$ or $W''$, we have $\ell_{\lambda}= \frac {p-1}2 (1-\frac 1{\textrm{ord}(c)})$.
\item for $\lambda = X_{\varphi}$, we have $\ell_{\lambda}= (p-1) (1- \frac {1 }{\textrm{ord}(c)})$.
\item for $\lambda = X'$ or $X''$, we have $\ell_{\lambda}= \frac {p-1}2 (1-\frac 1{\textrm{ord}(c)})$.
\end{itemize}
\end{enumerate}
\subsubsection{Multiplicities in the elliptic case}
In the elliptic case the order of $c$ is even. The multiplicities are as follows
\begin{itemize}
\item for $\lambda = U$, we have $\ell_{\lambda}= 0$.
\item for $\lambda = V$, we have $\ell_{\lambda}= (p+1) (1-\frac 2{\textrm{ord}(c)})$.
\item for $\lambda = W_{\tau}$, we have $\ell_{\lambda}= (p+1)(1-\frac {1 + \tau(-1)}{\textrm{ord}(c)})$.
\item for $\lambda = W'$ or $W''$, we have
$$\ell_{\lambda}= \left\{\begin{array}{cl} (p+1)(\frac 12 - \frac 1 {\textrm{ord}(c)}) & \textrm{for}\; p \equiv 1 (\textrm{mod }4)  \\
\frac {p+1}2 & \textrm{for} \;p \equiv 3 (\textrm{mod }4)\end{array}\right.$$.
\item for $\lambda = X_{\varphi}$ with $\varphi(c) =1$, we have $\ell_{\lambda}= (p+1) (1-\frac 2{\textrm{ord}(c)})$.
\item for $\lambda = X_{\varphi}$ with $\varphi(c) \ne 1$, we have $\ell_{\lambda}= -2 +(p+1) (1- \frac {1 + \varphi(-1)}{\textrm{ord}(c)})$.
\item for $\lambda = X'$ or $X''$, we have
$$\ell_{\lambda}= \left\{\begin{array}{cl} \frac {p-1}2  & \textrm{for} \;p \equiv 1 (\textrm{mod }4)  \\
 -1+(p+1)(\frac 12 - \frac 1 {\textrm{ord}(c)})& \textrm{for} \; p \equiv 3 (\textrm{mod }4)\end{array}\right.$$
\end{itemize}

\begin{remark}It would be interesting to compute the signatures of the natural Hermitian forms on the isotypic components of complex or quaternionic type.
\end{remark}

%In the case of $p\equiv 1 (\textrm{mod }4)$ and $\textrm{tr}(c)=-2$ (\emph{parabolic}), one can use the fact that, as $c$ has order $2p$,
%$$\sum\limits_{j=0}^{2p-1}\chi_{\lambda}(c)=2p\cdot\textrm{dim }Fix_{\lambda}(c^j)$$
%and the character tables above to deduce that
%$$\textrm{dim }Fix_{\lambda}(c)=\left\{\begin{array}{cl}1 & \textrm{ for } \lambda = U, V, W', W^{''} \\ 0 & \textrm{ for } \lambda=X_{\varphi}, X', X^{''} \\ 1+\tau(-1)\in\{0,2\} & \textrm{ for } \lambda = W_{\tau}\end{array}\right.$$
%and hence
%$$\ell_{\lambda}=\textrm{codim}\textrm{Fix}_{\lambda}(c)=\left\{\begin{array}{cl}0 & \textrm{ for } \lambda = U \\
%p-1 & \textrm{ for } \lambda=V \\ (p-1)/2 & \textrm{ for } W', W^{''} \\ p-1 & \textrm{ for } \lambda=X_{\varphi} \\
%(p-1)/2 & \textrm{ for }\lambda = X',X^{''} \\ p-\tau(-1) & \textrm{ for } \lambda=W_{\tau} \end{array}\right.$$

%Of course, while this give the multiplicities $\ell_{\lambda}$ of all irreducible representations in the $H_1^{(0)}$ part of the homology of the regular origami associated to
%$(SL(2,\mathbb{F}_p), g_r,g_u)$ when $p\equiv 1 (\textrm{mod }4)$ and $\textrm{tr}(c)=-2$, there are several interesting questions left open here: for instance, what are the signatures of the Hermitian forms associated to the quaternionic representations $X', X''$, and $X_{\varphi}, W_{\tau}$ (when $\varphi(\eta)=\exp(2\pi i j/(p+1))$ and
%$\tau(\varepsilon)=\exp(2\pi ij/(p-1))$ for $j$ odd)?

\subsection {A regular origami $\mathcal O$ with a non trivial action of the affine group on ${\mathcal Irr}_{\Rset} ({\rm Aut}(\mathcal O))$}

As in example 3.11, we consider a prime number $p$ and    the Heisenberg group $G$ over the field $\Fset:=\Fset_p$:
the elements of $G$ are the matrices
$$ M(a,b,c)= \left ( \begin{array}{ccc} 1 & a & c \\ 0 & 1 & b \\ 0 & 0 & 1 \end{array} \right ),$$
with $a,b,c \in \Fset$. We consider the {\it regular} origami  $\mathcal O$ associated to the choice
$g_r = M(1,0,0)$, $g_u = M(0,1,0)$ of generators of $G$. This origami was studied in \cite{He}, \cite{Zm}.
As for any regular origami, the automorphism group of $\mathcal O$ is canonically
identified with $G$. Its Veech group is equal to $GL(2,\Zset)$ (see \cite{He}, \cite{Zm}).

\smallskip

The automorphism group of $G = \textrm {Aut} (\mathcal O)$ fits into an exact sequence
$$ 1 \rightarrow \textrm{Inn} (G) \simeq \Fset^2 \rightarrow \textrm {Aut} (G) \rightarrow GL(2,\Fset) \simeq \textrm{Out}(G) \rightarrow 1\; .$$
Here, the automorphism of $G$ associated to a pair $(c,c') \in \Fset^2$ sends $g_r$ to $M(1,0,c)$ and
$g_u$ to $M(0,1,c')$. The morphism $r$ from $\textrm {Aut} (G)$ to $GL(2,\Fset)$ is defined as follows: let
$\varphi \in \textrm {Aut} (G)$; write $\varphi(g_r)=M(a,b,c)$, $\varphi(g_u)=M(a',b',c')$; then  $r(\varphi)$  is the matrix
$\left ( \begin{array}{cc} a & a' \\ b & b' \end{array} \right )$.
Observe that
$$ \varphi (M(0,0,1)) = M(0,0, \det r(\varphi))\;.$$

\smallskip

Let us briefly recall what are the irreducible representations of $G$ over $\Cset$. There are $p^2$ $1$-dimensional representations, indexed by $(m,n) \in \Fset^2$, defined by the homomorphisms
$$ \chi_{m,n}(M(a,b,c)) = \exp \frac {2 \pi i }{p} (ma+nb) \;,$$
and $(p-1)$ irreducible representations of dimension $p$, indexed by primitive roots $\zeta$ of order $p$, defined (up to isomorphism) by
\begin{eqnarray*}
\rho_{\zeta} (g_r) (e_i) &=& \zeta^i e_i,\\
\rho_{\zeta} (g_u) (e_i) &=&  e_{i+1},
\end{eqnarray*}
where $(e_i)$ is the canonical basis of $\Cset^{\Fset}$. Observe that
\begin{eqnarray*}
 \rho_{\zeta} (M(0,0,1)) (e_i) &=& \rho_{\zeta}( g_r g_u g_r^{-1} g_u^{-1})(e_i) \\
&=&\zeta e_i  .
\end{eqnarray*}
All irreducible representations except for the trivial one are of complex type, the complex conjugacy being given by $(m,n) \leftrightarrow(-m,-n)$ , $\zeta \leftrightarrow \bar \zeta$.

\smallskip

The action of $\textrm {Out} (G)$ on ${\mathcal Irr}_{\Cset} (G)$ is as follows.
For $(m,n) \in \Fset^2$ and $\varphi \in \textrm {Aut} (G)$, one has
$$  \chi_{m,n} \circ \varphi = \chi_{m',n'} ,  \quad (m',n') = (m,n).r(\varphi)\;.$$
For a primitive root $\zeta$ of order $p$, one has $\rho_{\zeta} \circ \varphi \simeq \rho_{\zeta'}$ with $\zeta' =
\zeta^{\det r(\varphi)}$.

\smallskip

Finally, it is easy to check that the morphism
$$ \textrm{Aff}(\mathcal O) / \textrm{Aut}(\mathcal O) \simeq GL(M)=GL(2,\Zset) \longrightarrow \textrm{Out}(G) \simeq GL(2,\Fset)$$
induced by the conjugacy in the affine group is simply the reduction modulo p.

\smallskip

We conclude that the image of $\textrm{Aff}(\mathcal O)$ in $ \textrm{Out}(G)$ is the group of matrices with
determinant $\pm 1$. It acts nontrivially on the set of $1$-dimensional irreducible representations of $G$.
However, these representations do not appear in the decomposition of $H_1^{(0)}(\mathcal O , \Cset)$. On the other hand,
the representation $\rho_{\zeta} $ is transformed either in itself or its complex conjugate, so every
{\bf real} irreducible representation of dimension $2p$ is fixed by the action of the affine group.

\smallskip

{\bf Question:} Is there a regular origami $M$ for which the action of the affine group on the set
of isotypic components of $H_1^{(0)}(M, \Rset)$ is nontrivial?

\end{document}